\begin{document}

\title{Base-extension Semantics for Modal Logic}



\author{Timo Eckhardt\inst{1}\orcidID{0000-0002-0386-1864} \and 
David J. Pym\inst{1,2}\orcidID{0000-0002-6504-5838}}
\authorrunning{Timo Eckhardt \and David Pym}
%
\institute{University College London, London WC1E 6BT, UK \and
Institute of Philosophy, University of London,  London WC1H 0AR, UK
\email{\{t.eckhardt@ucl.ac.uk,d.pym@ucl.ac.uk\}}}
%


\date{} 


\maketitle

\begin{abstract}

  In proof-theoretic semantics, meaning is based on inference. It may be seen as the mathematical expression of the inferentialist interpretation of logic. Much recent work has focused on base-extension semantics, in which the validity of formulas is given by an inductive definition generated by provability in a `base' of atomic rules. Base-extension semantics for classical and intuitionistic propositional logic have been explored by several authors. In this paper, we develop base-extension semantics for the classical propositional modal systems $K$, $KT$, $K4$, and $S4$, with $\square$ as the primary modal operator. We establish appropriate soundness and completeness theorems and establish the duality between $\square$ and a natural presentation of $\lozenge$. We also show that our semantics is in its current form not complete with respect to euclidean modal logics. Our formulation makes essential use of relational structures on bases.


\keywords{Modal logic \and Proof-theoretic semantics \and Base-extension semantics}
\end{abstract}

\section{Introduction}

In proof-theoretic semantics (Pt-S), meaning is based on inference. Its philosophical basis in inferentialism \cite{brandom2009} ensures a universal approach to the meaning of the operators of logical systems: their meaning is given in terms of their use. By contrast, in model-theoretic semantics (Mt-S) meaning is determined relative to a choice of abstract (model) structure. 
 
Current research in Pt-S largely follows two different approaches. The first is proof-theoretic validity, following Dummett \cite{Dummett1991} and Prawitz \cite{Prawitz1971,Prawitz2006}. 
It aims to define what makes a proof valid. Dummett developed a philosophical interpretation of the normality results for the natural deduction rules of intuitionistic propositional logic. A valid proof is then one that can, by certain fixed operations, be transformed into a proof without unnecessary detours called a canonical proof. This approach is closely related to the BHK interpretation of IPL. For a more in-depth explanation see \cite{SchroederHeister2006,SchroederHeister2008}.

The second is base-extension semantics (B-eS), as developed in, for example, \cite{Makinson2014,piecha2017definitional,Sandqvist2005,Sandqvist2009,Sandqvist2015,SchroederHeister2016}. In B-eS, sets of atomic rules, called bases, establish the validity of atoms and the meaning of the logical connectives is then inductively defined relative to these bases. The choice of the form of atomic rules has profound repercussions: In the usual set-up of B-eS as stated above, limiting the atomic rules to simple production rules yields semantics for classical logic (see \cite{Sandqvist2005,Sandqvist2009}), while allowing for the discharging of atoms as assumptions, and taking care of the treatment of disjunction, results in intuitionistic semantics \cite{Sandqvist2015}. However, see \cite{Goldfarb2016,Stafford2023} for alternative approaches that are related to Kripke semantics. Deep connections between B-eS and definite formulae, proof-search, logic programming, and negation-as-failure (for intuitionistic propositional logic) have been described in \cite{GP-BSecLog-2023,SchroederHeister2008}.

The standard M-tS for modal logic is Kripke semantics \cite{kripke1959,kripke1963}. In this semantics, the validity of formulas is defined by reference to sets of possible worlds and a relation over those worlds. A necessitated formula (a formula of the form $\square\phi$) is true at a world if it holds at all the worlds reachable via the relation. Different modal logics arise according to different restrictions on the relation.

The goal of this paper is to use the B-eS for classical propositional logic to develop semantics for some of the most common classical propositional modal logics. We also show that our current formulation of this approach does not yield a complete semantics for euclidean modal logics.

The fundamental work on B-eS has been focused on IPL because of the constructive nature of the semantics. There is, however, a relation between intuitionistic and $S4$ modal logic. In fact, there are translations between these two logics, for example \cite{gore2019}. Although comparing a B-eS for intuitionistic propositional logic (for example in \cite{Sandqvist2015}) and $S4$ modal logic, as established later in this paper, is outside the scope of this paper and left for future work (see Section \ref{sec:Conclusion}), this highlights the importance of a B-eS of modal logic as a part of the bigger picture of how proof-theoretic semantics work for different systems. Additionally, B-eS allows us to give a validity relation that is based on proof-theoretic ideas, but closely resembles the modal validity relations reader are most likely already familiar with, given the near universal prevalence of Kripke semantics.

Our strategy is to define relations between bases analogously to the way in which Kripke semantics employs relations between worlds. The meaning of $\square$ is given by reference to relations between bases. This means that the validity of formulas is not solely based on the atomic rules of the bases at which they are evaluated. We do not consider the use of relational structure on bases inappropriate. Even in the case of classical propositional logic, validity already depends on the superset relation and we consider it appropriate to employ also other relations. 

In Section \ref{Sec:Classical}, we give a basic B-eS for classical propositional logic based on \cite{Makinson2014,Sandqvist2005,Sandqvist2009}. This is the underlying set-up. 

After giving a brief introduction to Kripke semantics and giving a Hilbert proof system for modal logic in Section \ref{sec:Kripke}, we develop our B-eS for modal logic in Section \ref{sec:ModalBES}, with $\square$ as our primary modal operator. Additionally, we generalize some important lemmas from the classical to the modal base-extension semantics and establish that at maximally-consistent bases and worlds in Kripke semantics our logical connectives behave in the same way. 

In Section \ref{sec:Completeness}, we proceed to show that the semantics is sound and complete for the modal logic $K$ and, with the appropriate restriction, $KT$, $K4$ and $KT4/S5$. We introduce a natural deduction proof system for classical logic and adapt the proofs in \cite{Sandqvist2005,Sandqvist2009} to show that every formula that is a theorem in this proof system is also valid on our base-extension semantics.\footnote{A proof for the validity of classical tautologies purely between base-extension semantics and the Hilbert system can be found in Appendix \ref{Appendix}.} We take this detour through natural deduction to show that our modal B-eS is an extension of the classical B-eS of \cite{Sandqvist2005,Sandqvist2009} and leaves the classical fragments unchanged. For modal formulas, we then show that the additional modal axioms and the rules of our Hilbert system hold on the corresponding base-extension semantics. This establishes completeness. For soundness we adapt the proof in \cite{Makinson2014} to show that if a formula is not valid in Kripke semantics, it also is not valid in our new semantics. To do so we construct maximally-consistent bases that correspond to the worlds in the model and define an appropriate relation between those bases such that a formula is true at a world if and only if it also holds at the corresponding base. It follows that if a formula can be false in Kripke semantics, there is a base and relation pair at which the formula will not hold. We also give a natural definition of $\lozenge$ that is dual to the $\square$. 

Finally, in Section \ref{sec:Euclidean}, we show that our semantics is not complete for euclidean modal logics.

\section{Classical Base-extension Semantics}
\label{Sec:Classical}
Before addressing the modalities, it is instructive to give the semantics for the underlying 
classical logic. Classical base-extension semantics has been given by Sandqvist in \cite{Sandqvist2005,Sandqvist2009} and Makinson in \cite{Makinson2014}.  
The basis of our semantics does not fundamentally differ from those presentations.

We start with a propositional language of countably many basic sentences (i.e., formulas that do not contain any logical vocabulary). A base is a set of inference rules for basic formulas, called base rules. Given the basic formulas $p$, $q$, and $r$, a base rule might state that $r$ follows from $p$ and $q$. The resulting deducibility relation at a base is then extended with semantic clauses for our logical connectives to a full consequence relation over the language. Noteworthy here are conditional formulas that are treated as hypothetical statements and evaluated with reference to extensions of the base. For example, for $\phi\to\psi$ to hold at a base we require that at every extensions of the base at which $\phi$ holds $\psi$ also holds. Finally, a formula is taken to be valid if it holds at all bases.

\begin{definition}
\label{ClassicalRule}
A \textit{base rule} is a pair $(L_j, p)$ where $L_j  = \{p_1, \dots, p_n \}$ is a finite (possibly empty) set of basic sentences and $p$ is also a basic sentence. Generally, a base rule will be written as $p_1, \dots, p_n \Rightarrow p$. A \textit{base} $\mathscr{B}$ is any countable collection of base rules. We call the set of all bases $\Omega$ and $\overline{\mathscr{B}}$ is the closure of the empty set under the rules in $\mathscr{B}$.

\end{definition}

As bases are the foundation of validity in our semantics, the choice of the class of base rules is important. Allowing for base rules that can have rules as premisses has been shown in \cite{Sandqvist2015} to result in intuitionistic logic while the more restricted form we use give us classical logic. For an in-depth discussion, see \cite{Sandqvist2022}.

\begin{definition} 
\label{def:classicalLanguage}
For atomic propositions $p$, the language for classical logic is generated by the following grammar:
\[
\phi ::= p \mid \bot \mid \phi\to\phi 
\]
\end{definition}

Note that we do not take $\vee$ or $\neg$ as primitive connective, because with the standard validity conditions these cause problems, as we discuss below.

\begin{definition}
\label{satisfaction}
Classical validity is defined as follows:
\[
\begin{array}{l@{\quad}c@{\quad}l}
 \Vdash_\mathscr{B} p    & \mbox{iff} & \mbox{$p$ is in 
            every set of basic sentences closed under $\mathscr{B}$} \\  
        & & \mbox{(i.e., iff $p\in \overline{\mathscr{B}}$)} \\
\Gamma\Vdash_\mathscr{B} \phi & \mbox{iff} & \mbox{$\Vdash_\mathscr{C} \phi$ for all $\mathscr{C}
            \supseteq \mathscr{B}$ s.t. $\Vdash_\mathscr{C} \psi$ for every $\psi \in \Gamma$} \\ 
\Vdash_\mathscr{B} \phi\to\psi & \mbox{iff} & \mbox{$\phi\Vdash_\mathscr{B} \psi$} \\ 
\Vdash_\mathscr{B} \bot & \mbox{iff} & \mbox{$\Vdash_{\mathscr{B}} p$ for every basic sentence $p$.} \\
\end{array} 
\]
A formula $\phi$ is \textit{valid} iff $\Vdash_\mathscr{B} \phi$ for every base $\mathscr{B}$.
A base $\mathscr{B}$ is {\it inconsistent} iff $\Vdash_\mathscr{B} \bot$ and {\it consistent} otherwise.
\end{definition}

This suffices to give a full classical propositional logic. The other connectives can be obtained from $\to$ and $\bot$, mostly in the usual manner; there are, however, two cases that are worth mentioning: $\neg$ and $\vee$.

First, we define the negation symbol $\neg \phi$ as $\phi\to\bot$ 
\cite{Makinson2014}. This is unusual in a classical setting, but a rather standard move in intuitionistic logic. In fact, even the evaluation of $\bot$ is not the usual method for classical logic in most semantics. A more common evaluation of $\bot$ for classical logic would be $\nVdash_\mathscr{B} \bot$, for all $\mathscr{B}$, or $\bot$ is not satisfied at any base. However, we run into problems with such a valuation: For example, in \cite{Makinson2014} it is shown that  \textit{double negation elimination} fails to hold. Let the base $\mathscr{B}$ be the set of all base rules, including the rule $\Rightarrow p$. Obviously $p$ holds at $\mathscr{B}$, but by definition $\bot$ does not and so, since $\mathscr{B}$ is a superset of all bases, there is no base at which $p\to\bot$ holds. It follows that $\Vdash_\emptyset (p\to\bot)\to\bot$, but, since $\nVdash_\emptyset p$, $\nVdash_\emptyset ((p\to \bot)\to \bot)\to p$.


Second, $\vee$ cannot be taken to be primitive with the standard clause or a base-extension style version such as `$\Vdash_\mathscr{B} \phi \vee \psi$ iff for all $\mathscr{C}\supseteq \mathscr{B}$, $\Vdash_\mathscr{C} \phi$ or $\Vdash_\mathscr{C} \psi$', as the \textit{law of excluded middle} fails at the empty base if one takes the treatment of $\bot$ above, because, for any propositional $p$, neither $p$ nor $p\to\bot$ will hold at the empty base and so $\nVdash_\emptyset p\vee(p\to\bot)$. It can, however, be obtained from $\to$ and $\bot$ in the usual manner. 

This suffices to give the classical propositional semantics on which we build the semantics of modal logic. Some lemmas and definitions are needed. The following lemma is taken from \cite{Sandqvist2009}:

\begin{lemma} \label{ClassicalMonotonicity}
If $\Gamma\Vdash_\mathscr{B} \phi$ and $\mathscr{B}\subseteq\mathscr{C}$, then $\Gamma\Vdash_{\mathscr{C}} \phi$.
    
\end{lemma}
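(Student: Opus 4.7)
The plan is to prove the lemma by induction on the structure of $\phi$, with the monotonicity of the closure operator $\overline{(\cdot)}$ as the only genuine content; once that is in place, every connective clause is either trivially monotonic by the shape of its definition or reduces to a previous case.

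First, I would observe a preliminary fact for the atomic layer: if $\mathscr{B}\subseteq\mathscr{C}$, then $\overline{\mathscr{B}}\subseteq\overline{\mathscr{C}}$. This is immediate because $\overline{\mathscr{B}}$ is the smallest set of basic sentences closed under the rules of $\mathscr{B}$, and every such rule is also a rule of $\mathscr{C}$, so $\overline{\mathscr{C}}$ is one of the sets whose intersection defines $\overline{\mathscr{B}}$.

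Next, I would handle the case $\Gamma \neq \emptyset$ in a single stroke, directly from the definition of the consequence relation: if $\Gamma \Vdash_{\mathscr{B}} \phi$ and $\mathscr{B}\subseteq\mathscr{C}$, take any $\mathscr{D} \supseteq \mathscr{C}$ with $\Vdash_{\mathscr{D}} \psi$ for all $\psi \in \Gamma$; by transitivity of $\subseteq$ we have $\mathscr{D} \supseteq \mathscr{B}$, so the hypothesis delivers $\Vdash_{\mathscr{D}} \phi$. This shows $\Gamma \Vdash_{\mathscr{C}} \phi$ with no use of induction.

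The remaining work is to show that $\Vdash_{\mathscr{B}} \phi$ implies $\Vdash_{\mathscr{C}} \phi$ for $\mathscr{B}\subseteq\mathscr{C}$, which I would do by induction on $\phi$. The atomic case is the preliminary fact above. For $\phi = \bot$, unfolding the clause says $\Vdash_{\mathscr{B}} p$ for every basic $p$, and the atomic case transports each such $\Vdash_{\mathscr{B}} p$ to $\Vdash_{\mathscr{C}} p$, hence $\Vdash_{\mathscr{C}} \bot$. For $\phi = \psi \to \chi$, we unfold to $\psi \Vdash_{\mathscr{B}} \chi$, which is a nonempty-context consequence statement and is therefore monotonic by the argument of the previous paragraph. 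There is no step that is subtle; the only place one could slip is in conflating $\emptyset \Vdash_{\mathscr{B}} \phi$ with $\Vdash_{\mathscr{B}} \phi$ before monotonicity is established, and the induction on $\phi$ is precisely what avoids that circularity. I therefore expect no real obstacle; the lemma is essentially a bookkeeping consequence of the definitions, and the main care is just to keep the atomic/$\bot$ base cases distinct from the inductive $\to$ case.
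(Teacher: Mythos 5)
Your proof is correct, and it is the standard argument: the paper itself gives no proof of this lemma (it is imported from Sandqvist's work), but the expected proof is exactly your induction on $\phi$, with monotonicity of the closure $\overline{(\cdot)}$ handling atoms, $\bot$ reducing to atoms, and the $\to$ case reducing to the non-inductive observation that the contextual consequence clause is monotone by transitivity of $\subseteq$. Your explicit care in distinguishing $\Vdash_{\mathscr{B}}\phi$ from $\emptyset\Vdash_{\mathscr{B}}\phi$ is a point the paper glosses over and is worth keeping.
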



Following \cite{Makinson2014}, we introduce the notion of maximally-consistent bases.

\begin{definition}
    \label{MaxConDef}
    A base $\mathscr{B}$ is maximally-consistent iff it is consistent and for every base rule $\delta$, either $\delta \in \mathscr{B}$ or $\mathscr{B}\cup\{\delta\}$ is inconsistent.

\end{definition}

\noindent Maximally-consistent bases are especially interesting because on them $\bot$ and $\to$ behave in the traditional way for a classical logic. This is used in the soundness proof for the classical base-extension semantics in \cite{Makinson2014}, which we adapt to modal base-extension semantics later.

\begin{lemma}
\label{ClassicalBehaviour}
For any maximally-consistent base $\mathscr{B}$, the following hold:
    \begin{itemize}
        \item $\nVdash_\mathscr{B} \bot$
        \item $\Vdash_\mathscr{B} \phi \to \psi$ iff either $\nVdash_\mathscr{B} \phi$ or $\Vdash_\mathscr{B} \psi$.
    \end{itemize}

\end{lemma}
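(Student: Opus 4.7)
The plan is to handle the two bullets separately. The first is immediate: by Definition~\ref{MaxConDef}, maximal consistency includes consistency, which by Definition~\ref{satisfaction} is exactly $\nVdash_\mathscr{B}\bot$.

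For the second bullet, I would first establish a preparatory observation, namely that at any inconsistent base $\mathscr{C}$ we have $\Vdash_\mathscr{C}\phi$ for every formula $\phi$. The proof proceeds by induction on $\phi$: the atomic case is precisely the clause defining $\Vdash_\mathscr{C}\bot$; the $\bot$ case is direct; and for $\phi = \psi \to \chi$, any extension $\mathscr{D}\supseteq\mathscr{C}$ remains inconsistent by Lemma~\ref{ClassicalMonotonicity}, so the induction hypothesis gives $\Vdash_\mathscr{D}\chi$ and hence $\psi\Vdash_\mathscr{C}\chi$.

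The left-to-right direction of the bi-implication is then a matter of instantiating the definition of $\to$: from $\phi\Vdash_\mathscr{B}\psi$, taking $\mathscr{C}=\mathscr{B}$ in the definition yields that $\Vdash_\mathscr{B}\phi$ implies $\Vdash_\mathscr{B}\psi$, which is exactly the required disjunction. For right-to-left, I would split into cases. If $\Vdash_\mathscr{B}\psi$, then Lemma~\ref{ClassicalMonotonicity} gives $\Vdash_\mathscr{C}\psi$ at every extension, so $\phi\Vdash_\mathscr{B}\psi$ holds trivially. If instead $\nVdash_\mathscr{B}\phi$, pick any $\mathscr{C}\supseteq\mathscr{B}$ with $\Vdash_\mathscr{C}\phi$; since the antecedent fails at $\mathscr{B}$ itself, $\mathscr{C}$ must be a strict extension, so there is some rule $\delta\in\mathscr{C}\setminus\mathscr{B}$. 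Maximality of $\mathscr{B}$ then forces $\mathscr{B}\cup\{\delta\}$ to be inconsistent, and monotonicity transports this to $\mathscr{C}$, so the preparatory observation delivers $\Vdash_\mathscr{C}\psi$.

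The main subtle point is the preparatory observation that everything holds at an inconsistent base; this is what lets the maximality hypothesis engage with the semantics of $\to$, since it converts a proper extension of a maximally-consistent base into a base at which every formula is valid. Once this is in place, the rest of the argument is simply a case split together with the monotonicity lemma and the definitional unfolding of $\to$.
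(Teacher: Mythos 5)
Your proof is correct. The paper itself gives no proof of this lemma, deferring to Makinson's paper, but your argument is the standard one and matches the strategy the paper uses elsewhere: your ``preparatory observation'' that every formula holds at an inconsistent base is exactly the classical counterpart of the paper's Lemma~\ref{EFQ}, and the use of maximality to force any proper extension witnessing $\Vdash_\mathscr{C}\phi$ to be inconsistent (hence to validate $\psi$) is precisely how the maximality hypothesis is meant to engage with the clause for~$\to$.
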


\noindent A proof of Lemma \ref{ClassicalBehaviour} can be found in \cite{Makinson2014}. 


\begin{lemma}
\label{MaxCon}
For every propositional formula $\phi$, if there is a base $\mathscr{B}$ s.t. $\nVdash_\mathscr{B} \phi$, then there is a maximally-consistent base $\mathscr{B}^*\supseteq\mathscr{B}$ with $\nVdash_{\mathscr{B}^*} \phi$.
\end{lemma}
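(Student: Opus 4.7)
The plan is a Lindenbaum-style construction. Since the language has countably many atoms, the set of base rules is countable; fix an enumeration $\delta_0, \delta_1, \delta_2, \ldots$. Set $\mathscr{B}_0 := \mathscr{B}$, and for each $i \geq 0$ let
\[
\mathscr{B}_{i+1} \,:=\, \begin{cases} \mathscr{B}_i \cup \{\delta_i\} & \text{if } \nVdash_{\mathscr{B}_i \cup \{\delta_i\}} \phi, \\ \mathscr{B}_i & \text{otherwise,} \end{cases}
\]
and take $\mathscr{B}^* := \bigcup_{i \in \mathbb{N}} \mathscr{B}_i$. Obviously $\mathscr{B}^* \supseteq \mathscr{B}$.

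I would then verify three things in sequence: (i) $\nVdash_{\mathscr{B}^*} \phi$; (ii) $\mathscr{B}^*$ is consistent; and (iii) for every rule $\delta \notin \mathscr{B}^*$, $\mathscr{B}^* \cup \{\delta\}$ is inconsistent. For (i), proceed by induction on the structure of $\phi$: the atomic case is clean because derivability of an atom uses only finitely many rules, so $p \in \overline{\mathscr{B}^*}$ already places $p$ in some $\overline{\mathscr{B}_i}$, contradicting the stage-$i$ rejection condition; the $\bot$ and $\to$ cases use the inductive hypothesis together with Lemma \ref{ClassicalMonotonicity}. Given (i), fact (ii) is immediate from the \emph{ex falso} observation that $\Vdash_\mathscr{B} \bot$ entails $\Vdash_\mathscr{B} \psi$ for every $\psi$ (a routine induction on $\psi$ using the clauses of Definition \ref{satisfaction}); so $\Vdash_{\mathscr{B}^*} \bot$ would contradict (i) whenever $\phi \neq \bot$, and if $\phi = \bot$ then (ii) is (i). For (iii), a rejected $\delta = \delta_i$ satisfies $\Vdash_{\mathscr{B}_i \cup \{\delta_i\}} \phi$ by construction, and Lemma \ref{ClassicalMonotonicity} lifts this to $\Vdash_{\mathscr{B}^* \cup \{\delta\}} \phi$.

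The main obstacle is (iii): Definition \ref{MaxConDef} requires $\Vdash_{\mathscr{B}^* \cup \{\delta\}} \bot$, which is strictly stronger than the $\Vdash_{\mathscr{B}^* \cup \{\delta\}} \phi$ that the construction directly delivers. Bridging this gap is the technical heart of the argument; one must show that by the time one reaches $\mathscr{B}^*$, the base already encodes enough rule-level information that any extension which makes $\phi$ valid in fact collapses into deriving every atom. I would approach this either by interleaving the enumeration with auxiliary propagator rules from $\phi$ to arbitrary atoms, so that $\Vdash \phi$ at any extension of $\mathscr{B}^*$ forces $\Vdash \bot$ there, or — following the style of \cite{Makinson2014} and using the classical characterisation at max-consistent bases (Lemma \ref{ClassicalBehaviour}) — by recasting the construction as one that picks out a valuation of atoms that satisfies $\mathscr{B}$ while falsifying $\phi$, from which $\mathscr{B}^*$ is then read off as the base of all rules compatible with that valuation. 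A secondary source of delicacy is (i) for compound $\phi$, since the clause for $\to$ quantifies universally over all supersets and its preservation through the chain does not follow from the finitary compactness available for atoms alone.
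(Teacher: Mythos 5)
Your construction and your diagnosis of where it breaks are both accurate, but the proof as written does not close the gap you identify, and the remedies you sketch do not work for compound $\phi$. Your first remedy---interleaving ``propagator rules from $\phi$ to arbitrary atoms''---is only available when $\phi$ is itself a basic sentence: by Definition \ref{ClassicalRule} a base rule is a pair of a finite set of \emph{basic} sentences and a basic sentence, so there is no such thing as a rule $\psi\to\chi \Rightarrow q$. Moreover, the ``secondary source of delicacy'' you flag for step (i) is in fact fatal to running the Lindenbaum construction directly on a compound formula: $\nVdash_{\mathscr{B}_i}\psi\to\chi$ at each finite stage only provides a witness $\mathscr{C}_i\supseteq\mathscr{B}_i$ with $\Vdash_{\mathscr{C}_i}\psi$ and $\nVdash_{\mathscr{C}_i}\chi$, and these witnesses need not be supersets of the union $\mathscr{B}^*$, so nothing guarantees $\nVdash_{\mathscr{B}^*}\psi\to\chi$. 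So both (i) and (iii) remain unproven for non-atomic $\phi$.

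The paper's proof (following Makinson) removes both obstacles with one move your proposal lacks: an outer induction on the structure of $\phi$, so that the Lindenbaum construction is only ever performed for an \emph{atom} $p$. There both problems vanish. For (i), atomic derivability is finitary, exactly as you observe. For (iii), every rule of the form $p\Rightarrow q$ is harmless to add (it can never make $p$ derivable), so all such rules end up in $\mathscr{B}^*$; hence any proper extension $\mathscr{C}\supset\mathscr{B}^*$ has $\Vdash_\mathscr{C}p$ by construction, therefore $\Vdash_\mathscr{C}q$ for every $q$, therefore $\Vdash_\mathscr{C}\bot$. The compound cases are then dispatched with no new construction: for $\phi=\bot$ pick an atom $p$ with $\nVdash_\mathscr{B}p$ and apply the atomic case; for $\phi=\psi\to\chi$ take the witnessing $\mathscr{C}\supseteq\mathscr{B}$ with $\Vdash_\mathscr{C}\psi$ and $\nVdash_\mathscr{C}\chi$, apply the induction hypothesis to $\chi$ at $\mathscr{C}$, and use Lemma \ref{ClassicalMonotonicity} to retain $\Vdash_{\mathscr{B}^*}\psi$. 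You should restructure your argument along these lines.
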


\begin{proof}
    This proof is also taken from \cite{Makinson2014}, but we highlight the construction as it is an important part of our soundness proof (Theorem \ref{Soundness}). 
    First note that $\mathscr{B}$ must be consistent because everything is valid on inconsistent bases. We proceed by induction on the structure of $\phi$.
    For the base case, let $\phi = p$. We construct the base $\mathscr{B}^*$ from $\mathscr{B}$ in the following way. Let $\sigma_1, \sigma_2, \dots$ be an enumeration of all base rules. $\mathscr{B}_0 = \mathscr{B}$, and $\mathscr{B}_{i+1} = \mathscr{B}_i \cup \{\sigma_{i+1}\}$ if $\nVdash_{\mathscr{B}_i \cup \{\sigma_{i+1}\}} p$ or $\mathscr{B}_{i+1}= \mathscr{B}_i$ otherwise and, finally, $\mathscr{B}^* = \bigcup_{i>0} \mathscr{B_i}$. 
    Clearly, $\nVdash_{\mathscr{B}^*} p$ and there is no consistent $\mathscr{C}\supset \mathscr{B}^*$ s.t. $\nVdash_\mathscr{C} p$. As $\nVdash_{\mathscr{B}^*} p$ we also know that $\mathscr{B}^*$ is consistent. What is left to show is that it is maximal. It is easy to see that, rules of the form $p\Rightarrow q$ are in $\mathscr{B}^*$ for every $q$. As we have shown, for any $\mathscr{C}\supset \mathscr{B}^*$, $\Vdash_\mathscr{C} p$, and so $\Vdash_\mathscr{C} q$ for any $q$, and, finally, $\Vdash_\mathscr{C} \bot$.

    For the inductive step, there are two cases to consider. If $\phi = \bot$, note that there is at least some $p$ s.t. $\nVdash_\mathscr{B} p$ and so we can use it to construct a maximally-consistent base as in the base case. For $\phi = \psi\to\chi$, if $\nVdash_\mathscr{B} \psi\to\chi$ there is a base $\mathscr{C}\supseteq \mathscr{B}$ with $\Vdash_\mathscr{C} \psi$, but $\nVdash_\mathscr{C} \chi$. By induction hypothesis we know there is a maximally-consistent base $\mathscr{B}^*\supseteq \mathscr{C}$ with $\nVdash_{\mathscr{B}^*} \chi$ and, by Lemma \ref{ClassicalMonotonicity}, $\Vdash_{\mathscr{B}^*} \psi$. So, $\nVdash_{\mathscr{B}^*} \phi \to \chi$.
    \qed
\end{proof}

One more lemma about maximally-consistent bases is needed. 

\begin{lemma}
    \label{AlmostMaxCon}
    If a base $\mathscr{B}$ has only a single maximally-consistent base $\mathscr{C}$ s.t. $\mathscr{C}\supseteq\mathscr{B}$, then for all $\phi$,
        $\Vdash_\mathscr{B} \phi \text{ iff } \Vdash_\mathscr{C} \phi$.

\end{lemma}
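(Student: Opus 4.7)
The plan is to prove the two directions of the biconditional separately. The forward direction is immediate from monotonicity, while the backward direction uses the contrapositive together with the construction from Lemma \ref{MaxCon} and the uniqueness hypothesis.

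For the forward direction, suppose $\Vdash_\mathscr{B} \phi$. Since $\mathscr{C}\supseteq\mathscr{B}$, Lemma \ref{ClassicalMonotonicity} immediately yields $\Vdash_\mathscr{C} \phi$.

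For the backward direction, I would argue by contraposition: assume $\nVdash_\mathscr{B} \phi$ and show $\nVdash_\mathscr{C} \phi$. By Lemma \ref{MaxCon}, there exists a maximally-consistent base $\mathscr{B}^* \supseteq \mathscr{B}$ with $\nVdash_{\mathscr{B}^*} \phi$. But by hypothesis $\mathscr{C}$ is the \emph{only} maximally-consistent base extending $\mathscr{B}$, so $\mathscr{B}^* = \mathscr{C}$, giving $\nVdash_\mathscr{C} \phi$.

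The main conceptual step is recognising that the uniqueness assumption forces Lemma \ref{MaxCon}'s witness to coincide with $\mathscr{C}$; once this is seen, the proof is essentially a one-line application of the preceding results. No genuine obstacle arises, as the inductive work has already been done inside Lemma \ref{MaxCon}. One minor point worth noting is that the statement quantifies over all formulas $\phi$ of the propositional language, which is precisely the class of formulas for which Lemma \ref{MaxCon} is proved, so the argument applies uniformly without any extra case analysis on the shape of $\phi$.
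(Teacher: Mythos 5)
Your proof is correct, and it is shorter than the paper's. The paper proves the right-to-left direction by a fresh induction on $\phi$: the atomic case is handled exactly as you do (the construction of Lemma \ref{MaxCon} produces a maximally-consistent extension refuting $p$, which by the uniqueness hypothesis must be $\mathscr{C}$), and the cases $\bot$ and $\psi\to\chi$ are then treated separately using the induction hypothesis. You instead observe that Lemma \ref{MaxCon} is already stated for \emph{every} propositional formula, so a single application of it to $\phi$ itself, combined with uniqueness, finishes the argument with no case analysis. This is a genuine simplification: it avoids duplicating the induction that already lives inside Lemma \ref{MaxCon}, and it sidesteps the somewhat awkward handling of the implication case in the paper's version. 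What the paper's inductive format buys is reusability: the same case-by-case skeleton is recycled almost verbatim for the modal analogue (Lemma \ref{BelowMaxCon}), where only the $\square$ case is new; your one-step argument would not transfer there as directly, since the modal counterpart of Lemma \ref{MaxCon} is only established later (in the Appendix) and itself depends on the modal-relation conditions. For the classical lemma as stated, though, your argument is complete and valid.
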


\begin{proof}
    The left-to-right direction follows from Lemma \ref{ClassicalMonotonicity}.

    For the right-to-left direction, we show the contrapositive. If $\nVdash_\mathscr{B} \phi$, then $\nVdash_\mathscr{C} \phi$. We proceed by induction. For the base case, assume $\nVdash_\mathscr{B} p$. 
     By the construction in Lemma \ref{MaxCon}, there is a maximally-consistent $\mathscr{D}\supseteq\mathscr{B}$ s.t. $\nVdash_\mathscr{D} p$ and, since $\mathscr{C}$ is the only maximally-consistent superset base of $\mathscr{B}$, $\mathscr{D}=\mathscr{C}$. 
    For $\phi = \bot$, we know there is for $\nVdash_\mathscr{B}\bot$, there must be some $q$ s.t. $\nVdash_\mathscr{B} q$ and we proceed as in the base case. Finally, for $\phi = \psi\to\chi$, note that in order for $\nVdash_\mathscr{C} \psi\to\chi$ there must be some base $\mathscr{D}\supseteq\mathscr{C}$ with $\Vdash_\mathscr{D} \psi$ and $\nVdash_\mathscr{D} \chi$. Since $\nVdash_\mathscr{D} \chi$, $\mathscr{D}$ cannot be inconsistent. So, $\mathscr{C}=\mathscr{D}$ and $\Vdash_\mathscr{C} \psi$ and $\nVdash_\mathscr{C} \chi$. Finally, $\Vdash_\mathscr{B} \psi$ and $\nVdash_\mathscr{B} \chi$ follow by the induction hypothesis.
    \qed
    \end{proof}

This concludes our presentation of the base-extension semantics for classical propositional logic, as given in \cite{Makinson2014,Sandqvist2005,Sandqvist2009}. We can now proceed to modal logic. 



\section{Kripke Semantics and Hilbert system for Modal Logic}
\label{sec:Kripke}

Before addressing the semantics of modalities, we first need to extend the language with the modal operators $\square$ and $\lozenge$.  As modal logics are a family of logics, we give a general account of the semantics that holds for all normal modal logics whose semantics can be given by Kripke semantics, using $\gamma$ as a placeholder for the name of the modal logic. Later in this section, we discuss explicitly which logics we are considering here.

\begin{definition} 
\label{def:KripkeLanguage}
For atomic propositions $p$, the language  for modal logic $\gamma$ is generated by the following grammar:
\[
    \phi ::= p \mid \bot \mid \neg \phi \mid \phi\to\phi \mid \square \phi \mid \lozenge\phi 
\]
\end{definition}

This is an extension of the language of classical base-extension semantics in two regards: Obviously, the modal operators $\square$ and $\lozenge$ are added, but we also require $\neg$ as a primitive operator, as in Kripke semantics negation is taken as primitive and not defined using $\to$ and $\bot$. In Section~\ref{sec:Duality}, we establish the usual classical duality between $\square$ and $\lozenge$. 


\begin{definition}
    A frame is a pair $F = \langle W, R\rangle$, in which $W$ is a set of possible worlds, $R$ a  binary relation on $W$. A model $M= \langle F,V\rangle$ is a pair of a frame $F$ and a valuation function $V$ giving the set of worlds in which $p$ is true s.t. $V(p) \subseteq W$ for every $p$.
\end{definition}

For each of the different modal logics to be represented, we must restrict the 
relation $R$ accordingly. $K$ is the weakest modal logic for which we can give Kripke semantics, obtained by simply putting no restriction on $R$. Table \ref{BRA} shows the necessary restrictions to get the right semantics for specific modal logics. These restrictions are called frame conditions. The list of axioms in Table \ref{BRA} represents some of the most widely used frame conditions (but, of course, there are many others that are not considered here). In this paper, we only consider modal logics obtained from combining the axioms in Table \ref{BRA}. We do not make any claims about whether the methods of obtaining base-extension semantics for modal logics discussed in this paper can be used to give semantics for modal logics using other modal axioms. 

\begin{table}[t]
\renewcommand\thetable{1}
\begin{center}
   \[
    \begin{array}{|l@{\quad}|l@{\quad}|l|}
\hline
\mbox{Name} & \mbox{Frame condition} & \mbox{Modal axiom}\\
\hline
K & \mbox{None} & \square(\phi \to \psi) \to (\square \phi \to \square\psi)\\
T & \mbox{Reflexivity} & \square \phi \to \phi\\
4 & \mbox{Transitivity} & \square \phi \to \square \square\phi \\
5 & \mbox{Euclidean} & \lozenge \phi \to \square\lozenge\phi \\
\hline
\end{array}
\]
\end{center}
\caption{\label{BRA} Frame conditions and the corresponding modal axioms}
\end{table} 

\begin{definition}
    Let $\gamma$ be a set of modal axioms of Table \ref{BRA}, we call $\gamma$-frames those frames whose relations satisfy the frame conditions for $\gamma$ according to Table \ref{BRA} and $\gamma$-models those models obtained from $\gamma$-frames.
\end{definition}

Formulas are interpreted at worlds, with validity defined at worlds. We give a definition of validity that is used for all modal logics discussed here.

\begin{definition}
    \label{KripkeValidity}
    Let $F = \langle W, R\rangle$ be a $\gamma$-frame, $M=\langle F,V\rangle$ a $\gamma$-model and $w\in W$ a world. That a formula $\phi$ is true at $(M,w)$---denoted $M,w\vDash^\gamma \phi$---as follows:

\[
\begin{array}{l@{\quad}c@{\quad}l}
M,w\vDash^\gamma p   & \mbox{iff} & w\in V(p) \\
M,w\vDash^\gamma \phi\to\psi & \mbox{iff} & \mbox{if $M,w\vDash^\gamma \phi$, then $M,w\vDash^\gamma \psi$} \\ 
M,w\vDash^\gamma \neg \phi & \mbox{iff} & \mbox{not $M,w\vDash^\gamma \phi$} \\ 
M,w\vDash^\gamma \bot & \mbox{iff} & \mbox{never} \\
M,w\vDash^\gamma \square \phi & \mbox{iff} & \mbox{for all $v$ s.t. $Rwv$, $M,v\vDash^\gamma \phi$}\\
M,w\vDash^\gamma \lozenge \phi & \mbox{iff} & \mbox{there is a $v$ s.t. $Rwv$ and $M,v\vDash^\gamma \phi$}\\

\end{array} 
\]

    







\noindent If a formula $\phi$ is true at all worlds in a model $M$, we say $\phi$ is true in $M$. A formula is valid in a model logic $\gamma$ iff it is true in all $\gamma$-models.


\end{definition}

The other propositional connectives are obtained from $\to$ and $\neg$ in the usual manner. Similarly, we do not strictly need both $\lozenge$ and $\square$ as they are duals of each other in the sense that $\lozenge \phi$ iff $\neg\square\neg\phi$ and $\square \phi$ iff $\neg\lozenge\neg\phi$ hold.



This concludes our overview of Kripke semantics.

Hilbert proof systems for modal logic are especially convenient, as we have axioms that correspond to the restriction on $R$ in the Kripke semantics. Because of this, we use the following Hilbert systems.

\begin{definition} \label{HilbertSystem}
    The proof system for the modal logic $K$ is given by the following axioms and rules:
    \begin{itemize}
        \item 1: $\phi\to(\psi\to\phi)$
        \item 2: $(\phi\to(\psi\to\chi))\to((\phi\to\psi)\to(\phi\to\chi))$
        \item 3: $(\neg\phi\to\neg\psi)\to (\psi\to\phi)$
        \item $K$: $\square(\phi\to\psi)\to(\square\phi\to\square\psi)$
        \item $MP$: If $\phi$ and $\phi\to\psi$, then $\psi$ 
        \item $NEC$: If $\phi$ is a theorem, so is $\square\phi$.
    \end{itemize}
\end{definition}
\noindent The axioms 1-3 together with the rule $MP$ constitute a proof system for classical logic. Proof systems for the other modal logics $\gamma$ are obtained by adding the corresponding axioms from Table \ref{BRA}.

This allows us to state soundness and completeness.

\begin{theorem}
\label{KripkeComp}
The proof system for a modal logic $\gamma$ is sound and complete with respect to the validity given by $\gamma$-models (e.g., $KT$ is sound and complete with respect to the validity given by reflexive models).
\end{theorem}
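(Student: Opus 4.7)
The plan is to prove soundness and completeness separately, using entirely standard modal-logic techniques, since Theorem~\ref{KripkeComp} is a classical meta-theorem about the Hilbert presentation.

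For soundness, I would proceed by induction on the length of a derivation in the Hilbert system. The base case consists of showing that every axiom schema in Definition~\ref{HilbertSystem}, together with the additional axioms from Table~\ref{BRA} used to obtain $\gamma$, is valid in every $\gamma$-model. Axioms 1--3 are classical tautologies and hold at every world of every model by the clauses in Definition~\ref{KripkeValidity}. Axiom $K$ is valid on every frame: fix $M,w$, assume $M,w\vDash^\gamma \square(\phi\to\psi)$ and $M,w\vDash^\gamma \square\phi$, and for an arbitrary $v$ with $Rwv$ combine the two to obtain $M,v\vDash^\gamma \psi$. For the remaining axioms one exploits the relevant frame condition: reflexivity validates $T$ by instantiating $Rww$, transitivity validates $4$ by composing two $R$-steps, and the euclidean condition validates $5$ by using that an $R$-successor of $w$ witnessing $\lozenge\phi$ is $R$-visible from every other successor. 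The inductive step handles $MP$ (obvious from the clause for $\to$) and $NEC$: if $\phi$ is valid in all $\gamma$-models, then at any $w$ and any $R$-successor $v$ we have $M,v\vDash^\gamma \phi$, hence $M,w\vDash^\gamma \square\phi$.

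For completeness, I would use the canonical-model construction. Define a set $\Gamma$ of formulas to be $\gamma$-consistent if $\neg(\phi_1\wedge\dots\wedge\phi_n)$ is not a theorem for any finite subset. By a Lindenbaum-style argument, every consistent set extends to a maximally consistent set (MCS), where an MCS $w$ satisfies the usual boolean closure properties (contains all theorems, is closed under $MP$, and for every $\phi$ contains exactly one of $\phi,\neg\phi$). Build the canonical model $M^\gamma = \langle W^\gamma, R^\gamma, V^\gamma\rangle$ by taking $W^\gamma$ to be the set of $\gamma$-MCSs, defining $R^\gamma w v$ iff $\{\phi : \square\phi\in w\}\subseteq v$, and $V^\gamma(p) = \{w : p\in w\}$. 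The central step is the truth lemma: for every formula $\phi$ and every $w\in W^\gamma$,
\[
M^\gamma, w \vDash^\gamma \phi \quad\text{iff}\quad \phi\in w.
\]
This is proved by induction on $\phi$; the boolean cases use maximality, and the $\square$ case uses axiom $K$ and $NEC$ to show that if $\square\phi\notin w$ then $\{\psi : \square\psi\in w\}\cup\{\neg\phi\}$ is consistent and hence extends to an $R^\gamma$-successor falsifying $\phi$.

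The remaining obstacle, and the genuinely delicate part, is to verify that $R^\gamma$ inherits the frame conditions imposed by the axioms of $\gamma$, so that $M^\gamma$ is actually a $\gamma$-model. For $T$, reflexivity of $R^\gamma$ follows from the fact that $\square\phi\in w$ implies $\phi\in w$ (using axiom $T$ and closure under $MP$). For $4$, transitivity follows from $\square\phi\in w$ implying $\square\square\phi\in w$ (using axiom $4$). The euclidean case is analogous, although the paper's later Section~\ref{sec:Euclidean} flags that this correspondence behaves differently in the B-eS setting. Once the canonical model lies in the intended class, completeness follows in the standard way: if $\phi$ is not a theorem, then $\{\neg\phi\}$ is consistent, extends to some MCS $w\in W^\gamma$, and by the truth lemma $M^\gamma, w\not\vDash^\gamma\phi$, so $\phi$ is not valid in the class of $\gamma$-models. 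As this is a well-known result, I would in practice simply cite a standard reference such as Blackburn, de Rijke and Venema rather than rehearse the full construction.
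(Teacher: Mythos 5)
Your proposal is correct and is exactly the standard argument: the paper itself offers no proof of Theorem~\ref{KripkeComp}, treating it as well-established background and deferring to references such as \cite{Blackburn2001}, which contain precisely the soundness-by-induction-on-derivations and canonical-model completeness proof you sketch. Your closing remark that in practice one would simply cite a standard reference is in fact what the paper does.
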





This concludes our quick look at Kripke semantics and Hilbert systems for modal logics. In Section \ref{sec:ModalBES}, we proceed to develop corresponding base-extension semantics. 


\section{Proof-theoretic Semantics for Modal Logic}
\label{sec:ModalBES}

First, we modify our grammar for modal logic to handle negation as $\phi\to\bot$. This, of course, gives us an extension of the language of the classical base-extension semantics given in Definition \ref{def:classicalLanguage}.  Additionally, we treat the dual operator $\lozenge$ as $\neg\square\neg$ in the usual way. This gives us a language that is different from the language used for Kripke semantics in Definition \ref{def:KripkeLanguage} but an extension of the language for classical base-extension semantics defined in Definition \ref{def:classicalLanguage}.

\begin{definition}
    For atomic propositions $p$, the language for a modal logic $\gamma$ is generated by the following grammar:

    \[\phi := p \mid \bot \mid \phi\to\phi \mid \square \phi\]
\end{definition}

The basic idea for base-extension semantics for modal logic is to take the base 
rules and bases from Definition \ref{ClassicalRule} and, as in in Kripke 
semantics, add a relation on the set of bases. We then no longer evaluate 
formulas purely at a base but rather at a base given a relation. To differentiate 
the two, we continue to use $R$ for a relation in a Kripke model and use 
$\mathfrak{R}$ for a relation on the set of bases. 

We can, however, not 
just use any relation on the set of bases. We call the appropriate 
relations \textit{modal relations}. As with Kripke semantics, we 
obtain different modal logics by enforcing different restrictions on 
relations $\mathfrak{R}$. These will be the same restrictions 
as on the relation between worlds in Kripke semantics.

\begin{definition}
\label{modalRelation}
A relation $\mathfrak{R}$ on the set of bases $\Omega$ is called a modal relation iff, for all $\mathscr{B}$,

\begin{enumerate}[label=(\alph*)]
    \item if $\Vdash_{\mathscr{B}} \bot$, then there is a $\mathscr{C}$ s.t. $\mathfrak{R}\mathscr{B}\mathscr{C}$ and $\Vdash_{\mathscr{C}} \bot$ and, for all $\mathscr{D}$, if $\mathfrak{R}\mathscr{B}\mathscr{D}$, then $\Vdash_{\mathscr{D}} \bot$ 

    \item if $\nVdash_{\mathscr{B}} \bot$, then, for all $\mathscr{C}$, s.t. $\mathfrak{R} \mathscr{B}\mathscr{C}$, $\nVdash_{\mathscr{C}} \bot$

    \item for all $\mathscr{C}$, if $\mathscr{B}$ is consistent and $\mathfrak{R}\mathscr{B}\mathscr{C}$, then either $\mathscr{B}$ is maximally-consistent or there is a $\mathscr{D}\supset \mathscr{B}$ s.t. $\mathfrak{R}\mathscr{D}\mathscr{C}$

    \item for all $\mathscr{C}$, if $\mathfrak{R}\mathscr{B}\mathscr{C}$, then for all $\mathscr{D}\subseteq\mathscr{B}$, $\mathfrak{R}\mathscr{D}\mathscr{C}$.

\end{enumerate}

\noindent A modal relation $\mathfrak{R}$ is called a $\gamma$-modal relation iff $\mathfrak{R}$ satisfies the frame conditions corresponding to $\gamma$.

\end{definition}

\begin{figure}[h]
\begin{center}
\begin{tabular}{cc}

\begin{tikzpicture}[-,shorten >=1pt,node distance =1cm, thick] 
			\node[label=below:{$\bot$}]  (A) at (0,1) {$\mathscr{B}$};
			\node[label=below:{$\bot$}] (C) at (2,2) {$\mathscr{C}$};
			\node[label=below:{$\cancel{\bot}$}] (D) at (2,0) {$\mathscr{D}$};
			\path

(A) edge[->] node[above] {at least one\qquad\:\;\;\:\;\;\;\;\:\;\:\:\;\;\;     } (C)
;		
\draw[->] (A) to node {$\xcancel{\:\;}$} (D);

		\end{tikzpicture}

&

\begin{tikzpicture}[-,shorten >=1pt,node distance =2cm, thick] 

\node[label=below:{$\;$}] (Z) at (0,0) {$\;$};
\node[label=below:{$\cancel{\bot}$}] (A) at (2,1) {$\mathscr{B}$};
\node[label=below:{$\bot$}] (B) [right of=A] {$\mathscr{C}$};

\draw[->] (A) to node {$\xcancel{\:\;}$} (B);

\end{tikzpicture}

\end{tabular}
\end{center}
\caption{\label{11(ab)} Illustration of Definition \ref{modalRelation} $(a)$ on the left and $(b)$ on the right}
\end{figure}
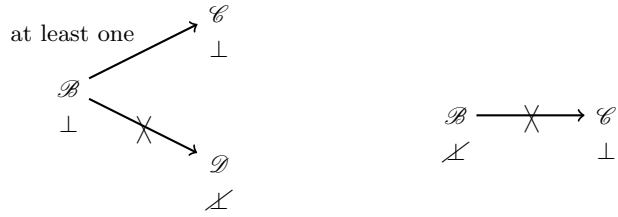

The conditions $(a)$ and $(b)$ ensure that there are no relations between inconsistent and consistent bases, as illustrated in Figure \ref{11(ab)}. Specifically, $(a)$ ensures that we have \textit{ex falso quodlibet}, as we will show in Lemma \ref{EFQ}, and allows us to establish that maximally-consistent bases behave similarly to worlds. Condition $(b)$ establishes that consistent bases do not have relations to inconsistent ones. Again this is used in establishing the similarity between maximally-consistent bases and worlds, because there cannot be inconsistent worlds and so no world will have a relation to it.

Conditions $(c)$ and $(d)$ are best understood when we read $\mathfrak{R}\mathscr{B}\mathscr{C}$ as $\mathscr{C}$ is considered possible at $\mathscr{B}$. This allows us to give intuitive explanations for these conditions. In our proofs,  these two conditions make sure that the structure of the relation $\mathfrak{R}$ is preserved when going up or down the subset relation. This is necessary as the validity of a formula at a base is not necessarily just evaluated at that base, but, as in the case of conditional formulas, can also rely on its supersets. 

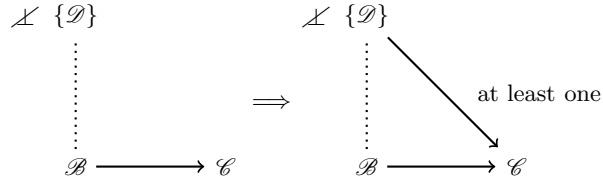
\begin{figure}[h]
\begin{center}

\begin{tabular}{c c  c}

\begin{tikzpicture}[-,shorten >=1pt,node distance =2cm, thick, baseline={([yshift=-1.8ex]current bounding box.center)}] 

\node (A) {$\mathscr{B}$};
\node (B) [right of=A] {$\mathscr{C}$};
\node[label=left:{$\cancel{\bot}$}] (C) [above of=A] {$\{\mathscr{D}\}$};

\draw[->] (A) to node {} (B);
\draw[dotted, right] (A) to node {} (C);

\end{tikzpicture}

&

$\Longrightarrow$

&

\begin{tikzpicture}[-,shorten >=1pt,node distance =2cm, thick, baseline={([yshift=-1.8ex]current bounding box.center)}] 

\node (A) {$\mathscr{B}$};
\node (B) [right of=A] {$\mathscr{C}$};
\node[label=left:{$\cancel{\bot}$}] (C) [above of=A] {$\{\mathscr{D}\}$};

\draw[->] (A) to node {} (B);
\draw[dotted, right] (A) to node {} (C);
\draw[->, right] (C) to node {\quad at least one} (B);

\end{tikzpicture}

\end{tabular}

\caption{\label{11(c)} Illustration of Definition \ref{modalRelation} $(c)$ with dotted lines representing the subset relation;  
$\{\mathscr{D}\}$ denotes that there exists such a $\mathscr{D}$}
\end{center}
\end{figure}

Condition $(c)$ then tells us that if a base $\mathscr{B}$ is not yet maximally-consistent, then for every base $\mathscr{C}$ that is possible at $\mathscr{B}$, there has to be a way of completing $\mathscr{B}$ (i.e., adding base rules to $\mathscr{B}$) such that $\mathscr{C}$ remains possible (see Figure \ref{11(c)}). This is mainly important when looking at those bases that only have a single maximally-consistent base, because this, together with $(d)$, guarantees that they consider the same bases possible and so agree on all modal formulas, as we will see in Lemma \ref{BelowMaxCon}.

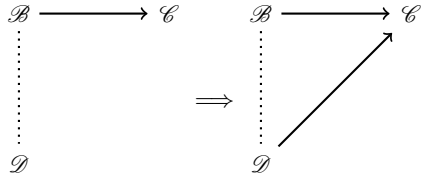
\begin{figure}[h]
\begin{center}

\begin{tabular}{c c  c}

\begin{tikzpicture}[-,shorten >=1pt,node distance =2cm, thick, baseline={([yshift=-1.8ex]current bounding box.center)}] 

\node (A) {$\mathscr{B}$};
\node (B) [right of=A] {$\mathscr{C}$};
\node (C) [below of=A] {$\mathscr{D}$};

\draw[->] (A) to node {} (B);
\draw[dotted, right] (A) to node {} (C);

\end{tikzpicture}

&

$\Longrightarrow$

&

\begin{tikzpicture}[-,shorten >=1pt,node distance =2cm, thick, baseline={([yshift=-1.8ex]current bounding box.center)}] 

\node (A) {$\mathscr{B}$};
\node (B) [right of=A] {$\mathscr{C}$};
\node (C) [below of=A] {$\mathscr{D}$};

\draw[->] (A) to node {} (B);
\draw[dotted, right] (A) to node {} (C);
\draw[->] (C) to node {} (B);

\end{tikzpicture}

\end{tabular}

\caption{\label{11(d)} Illustration of Definition \ref{modalRelation} $(d)$ with dotted lines representing the subset relation}
\end{center}
\end{figure}

Conversely, $(d)$ says that if $\mathscr{C}$ is considered possible (i.e., $\mathfrak{R}\mathscr{B}\mathscr{C}$) at $\mathscr{B}$, then it must also be possible at the subsets of $\mathscr{B}$. So adding new information to a base in the form of base rules cannot make bases possible that are not already possible without that information. New base rules can only cut down on the number of bases considered possible. This conditions will do a lot of the heavy lifting in our proof of completeness in Theorem \ref{Completeness} and the lemmas leading up to it. It guarantees that any conclusion that can be drawn from the bases considered possible at a base can also be drawn at its subsets. In Lemma \ref{OtherModalaxioms}, for example, we make use of this to show that the $(4)$ axiom holds on transitive $\mathfrak{R}$.

Given that we can give the following validity conditions for a modal logic $\gamma$:

\begin{definition}
 \label{EXTValidity}

We define validity at a base $\mathscr{B}$ given a $\gamma$-modal relation $\mathfrak{R}$ for a modal logic $\gamma$ as follows:
\[
\begin{array}{l@{\quad}c@{\quad}l}
\Vdash^\gamma_{\mathscr{B},\mathfrak{R}} p   & \mbox{iff} & \mbox{$p$ is in every set of basic sentences closed under $\mathscr{B}$} \\  
        & & \mbox{(i.e., iff $p\in \overline{\mathscr{B}}$)} \\
\Gamma\Vdash^\gamma_{\mathscr{B},\mathfrak{R}} \phi & \mbox{iff} & \mbox{$\Vdash^\gamma_{\mathscr{C},\mathfrak{R}} \phi$ for all $\mathscr{C}\supseteq \mathscr{B}$ s.t. $\Vdash^\gamma_{\mathscr{C},\mathfrak{R}} \psi$ for every $\psi \in \Gamma$}\\ 
\Vdash^\gamma_{\mathscr{B},\mathfrak{R}} \phi\to\psi & \mbox{iff} & \mbox{$\phi\Vdash^\gamma_{\mathscr{B},\mathfrak{R}} \psi$} \\ 
\Vdash^\gamma_{\mathscr{B},\mathfrak{R}} \bot & \mbox{iff} & \mbox{$\Vdash^\gamma_{\mathscr{B},\mathfrak{R}} p$ for every basic sentence $p$} \\
\Vdash^\gamma_{\mathscr{B},\mathfrak{R}} \square \phi & \mbox{iff} & \mbox{for all $\mathscr{C}\supseteq \mathscr{B}$ and $\mathscr{C'}$ s.t. $\mathfrak{R}\mathscr{C}\mathscr{C'}$, $\Vdash^\gamma_{\mathscr{C'},\mathfrak{R}} \phi$}\\

\end{array} 
\]








\noindent A formula $\phi$ is \textit{$\gamma$-valid}, written as $\Vdash^\gamma \phi$, iff $\Vdash^\gamma_{\mathscr{B}, \mathfrak{R}} \phi$ for all modal bases $\mathscr{B}$ and $\gamma$-modal relations $\mathfrak{R}$. 
Our definition of an inconsistent base from the classical base-extension semantics in Definition \ref{satisfaction} remains unchanged. A base $\mathscr{B}$ is {\it inconsistent} iff $\Vdash_{\mathscr{B}} \bot$.


\end{definition}

Now we can begin adapting lemmas of the classical base-extension semantics from \cite{Makinson2014,Sandqvist2009} to our modal base-extension semantics.

\begin{lemma}
    \label{ModalMonotonicity}

If $\Gamma\Vdash^\gamma_{\mathscr{B},\mathfrak{R}} \phi$ and $\mathscr{B}\subseteq\mathscr{C}$, then $\Gamma\Vdash^\gamma_{\mathscr{C},\mathfrak{R}} \phi$.
    
\end{lemma}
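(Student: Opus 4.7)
The plan is to prove Lemma \ref{ModalMonotonicity} by essentially the same strategy as Lemma \ref{ClassicalMonotonicity}, exploiting the fact that the semantic clauses for both $\rightarrow$ and $\square$ already carry a universal quantification over supersets of the evaluation base. First I would dispose of the case in which $\Gamma \neq \emptyset$ directly from the definition: unfolding $\Gamma \Vdash^{\gamma}_{\mathscr{B},\mathfrak{R}} \phi$ as ``for every $\mathscr{D} \supseteq \mathscr{B}$ at which every $\psi \in \Gamma$ holds, $\Vdash^{\gamma}_{\mathscr{D},\mathfrak{R}} \phi$'', any $\mathscr{D} \supseteq \mathscr{C}$ is automatically a superset of $\mathscr{B}$ by transitivity of $\subseteq$, so the same witness-free argument yields $\Gamma \Vdash^{\gamma}_{\mathscr{C},\mathfrak{R}} \phi$. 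This observation also reduces the problematic cases to the monotonicity of the ``$\Gamma = \emptyset$'' relation $\Vdash^{\gamma}_{\mathscr{B},\mathfrak{R}}\phi$.

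For the latter, I would proceed by induction on the structure of $\phi$. For the atomic case $\phi = p$, note that $\overline{\mathscr{B}} \subseteq \overline{\mathscr{C}}$ whenever $\mathscr{B} \subseteq \mathscr{C}$, since any derivation using rules in $\mathscr{B}$ is a derivation using rules in $\mathscr{C}$; so $p \in \overline{\mathscr{B}}$ implies $p \in \overline{\mathscr{C}}$. For $\phi = \bot$, $\Vdash^{\gamma}_{\mathscr{B},\mathfrak{R}} \bot$ means $\Vdash^{\gamma}_{\mathscr{B},\mathfrak{R}} p$ for every basic $p$, so by the atomic case $\Vdash^{\gamma}_{\mathscr{C},\mathfrak{R}} p$ for every basic $p$, and hence $\Vdash^{\gamma}_{\mathscr{C},\mathfrak{R}} \bot$. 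For the conditional $\phi = \psi \rightarrow \chi$, unfold the definition to $\psi \Vdash^{\gamma}_{\mathscr{B},\mathfrak{R}} \chi$, apply the already-dispatched $\Gamma \neq \emptyset$ case with $\Gamma = \{\psi\}$ to obtain $\psi \Vdash^{\gamma}_{\mathscr{C},\mathfrak{R}} \chi$, and refold to get $\Vdash^{\gamma}_{\mathscr{C},\mathfrak{R}} \psi \rightarrow \chi$.

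The modal case $\phi = \square\psi$ is the only genuinely new one, but it turns out to be immediate rather than hard. Assume $\Vdash^{\gamma}_{\mathscr{B},\mathfrak{R}} \square\psi$, that is, for every $\mathscr{D} \supseteq \mathscr{B}$ and every $\mathscr{D}'$ with $\mathfrak{R}\mathscr{D}\mathscr{D}'$, one has $\Vdash^{\gamma}_{\mathscr{D}',\mathfrak{R}} \psi$. Given an arbitrary $\mathscr{D} \supseteq \mathscr{C}$ and $\mathscr{D}'$ with $\mathfrak{R}\mathscr{D}\mathscr{D}'$, transitivity of $\subseteq$ gives $\mathscr{D} \supseteq \mathscr{B}$, so the hypothesis immediately yields $\Vdash^{\gamma}_{\mathscr{D}',\mathfrak{R}} \psi$. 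Hence $\Vdash^{\gamma}_{\mathscr{C},\mathfrak{R}} \square\psi$. Worth noting is that none of the structural conditions $(a)$–$(d)$ on modal relations from Definition \ref{modalRelation} are needed here: because the modal clause quantifies over all supersets of the evaluation base on the left of $\mathfrak{R}$, passing from $\mathscr{B}$ to a larger $\mathscr{C}$ simply restricts the quantifier. The conceptual obstacle, if any, is just recognizing that this ``built-in'' universal quantification in the $\square$-clause is precisely what makes monotonicity free; conditions $(c)$ and $(d)$ will instead be doing their work in later, downward-directed arguments such as Lemma \ref{BelowMaxCon} and the completeness proof.
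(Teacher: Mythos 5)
Your proof is correct and follows essentially the same route as the paper's: induction on the structure of $\phi$, with the $\square$-case immediate because the clause for $\square$ already quantifies over all supersets of the evaluation base, so passing from $\mathscr{B}$ to $\mathscr{C}\supseteq\mathscr{B}$ merely restricts that quantifier. Your additional observation that conditions $(a)$--$(d)$ of Definition \ref{modalRelation} play no role here is also accurate.
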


\begin{proof}
    This proof proceeds in the same way as the proof of Lemma \ref{ClassicalMonotonicity}, by induction on $\phi$. We only have the new $\phi=\square\psi$ to consider. By the definition of validity for $\square$ this follows immediately since for all $\mathscr{D}\supseteq\mathscr{C}$ we also have $\mathscr{D}\supseteq\mathscr{B}$.
    \qed
\end{proof}

\begin{lemma}
    \label{EFQ}
    For all $\gamma$, $\gamma$-modal relations $\mathfrak{R}$ and bases $\mathscr{B}$, if $\mathscr{B}$ is inconsistent, then $\Vdash^\gamma_{\mathscr{B},\mathfrak{R}}\phi$ for all $\phi$.
    
\end{lemma}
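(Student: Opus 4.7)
The plan is to prove the statement by induction on the structure of $\phi$, noting first that inconsistency is inherited by supersets: if $\Vdash^\gamma_{\mathscr{B},\mathfrak{R}}\bot$, meaning $p \in \overline{\mathscr{B}}$ for every atom $p$, then since $\overline{\mathscr{B}} \subseteq \overline{\mathscr{C}}$ whenever $\mathscr{B} \subseteq \mathscr{C}$, every such $\mathscr{C}$ is also inconsistent. I would record this observation up front, as it is used in all inductive cases.

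For the base cases, $\phi = p$ and $\phi = \bot$ are immediate: by the definition of inconsistency, $\Vdash^\gamma_{\mathscr{B},\mathfrak{R}} p$ holds for every atomic $p$, and the case $\phi = \bot$ is the assumption itself. For $\phi = \psi \to \chi$, I would fix any $\mathscr{C} \supseteq \mathscr{B}$ with $\Vdash^\gamma_{\mathscr{C},\mathfrak{R}}\psi$; by the preliminary observation, $\mathscr{C}$ is inconsistent, so the induction hypothesis applied to $\chi$ yields $\Vdash^\gamma_{\mathscr{C},\mathfrak{R}}\chi$, giving $\Vdash^\gamma_{\mathscr{B},\mathfrak{R}} \psi \to \chi$.

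The interesting case is $\phi = \square\psi$: I need to show that for every $\mathscr{C} \supseteq \mathscr{B}$ and every $\mathscr{C}'$ with $\mathfrak{R}\mathscr{C}\mathscr{C}'$, we have $\Vdash^\gamma_{\mathscr{C}',\mathfrak{R}} \psi$. By the preliminary observation, every such $\mathscr{C}$ is inconsistent. Here I invoke clause $(a)$ of Definition \ref{modalRelation}: since $\Vdash_{\mathscr{C}} \bot$, the clause guarantees that every $\mathscr{C}'$ with $\mathfrak{R}\mathscr{C}\mathscr{C}'$ satisfies $\Vdash_{\mathscr{C}'}\bot$. The induction hypothesis applied to $\psi$ at the inconsistent base $\mathscr{C}'$ then gives $\Vdash^\gamma_{\mathscr{C}',\mathfrak{R}} \psi$, as required.

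The only real obstacle is the modal step, and it is precisely what clause $(a)$ of the definition of a modal relation was designed to handle: without it, an inconsistent base could have a consistent $\mathfrak{R}$-successor at which $\psi$ fails, breaking \emph{ex falso quodlibet} for $\square$-formulas. Everything else is routine structural induction together with monotonicity of the closure operator on bases.
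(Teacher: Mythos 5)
Your proof is correct and follows essentially the same route as the paper's: induction on $\phi$, using the fact that supersets of an inconsistent base are inconsistent for the $\to$ case, and clause $(a)$ of Definition \ref{modalRelation} for the $\square$ case. The only cosmetic difference is that you justify the preservation of inconsistency under $\supseteq$ directly from monotonicity of the closure operator, where the paper cites Lemma \ref{ModalMonotonicity}; both are fine.
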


\begin{proof}
    We prove this by induction on the complexity of $\phi$. For $\phi = p$ and $\phi = \bot$ this follows immediately from the definition of inconsistent bases and the validity conditions in Definition \ref{EXTValidity}. For $\phi= \psi\to\chi$, we have $\Vdash_{\mathscr{B},\mathfrak{R}} \psi\to\chi$ iff $\phi\Vdash_{\mathscr{B},\mathfrak{R}} \psi$. Since all $\mathscr{C}\supseteq\mathscr{B}$ s.t. $\Vdash_{\mathscr{C},\mathfrak{R}} \psi$ will also be inconsistent by Lemma \ref{ModalMonotonicity}, we know that $\Vdash_{\mathscr{B},\mathfrak{R}} \chi$ by the induction hypothesis. For the final case let $\phi = \square\psi$. By Lemma \ref{ModalMonotonicity}, all $\mathscr{C}\supseteq\mathscr{B}$ are also inconsistent and, by $(a)$ of Definition \ref{modalRelation}, so are all $\mathscr{D}$ s.t. $\mathfrak{R}\mathscr{C}\mathscr{D}$ and we have $\Vdash_{\mathscr{D},\mathfrak{R}} \psi$ by the induction hypothesis.
    \qed
\end{proof}



In the proof of soundness for classical base-extension semantics in \cite{Makinson2014}, Makinson makes use of maximally-consistent bases as they behave like classical valuations.\footnote{In this paper, we follow the notation of \cite{Makinson2014} for soundness and completeness. In \cite{Sandqvist2005,Sandqvist2009,Sandqvist2015} these two notions are reversed.} Similarly, we make use of maximally-consistent bases for the same reason for our soundness proof. Put simply, in our soundness proof we show that for every world in a model there is a corresponding maximally-consistent base. For that we show that maximally-consistent bases, together with a modal relation, behave exactly like worlds in Kripke semantics in Lemma \ref{ModalBehaviour}, below.    

Note that we have not changed our notion of a maximally-consistent base from Definition \ref{MaxConDef} as we have not changed the make-up of our bases.


\begin{lemma}
\label{ModalBehaviour}
For any $\gamma$, $\gamma$-modal relation $\mathfrak{R}$ and maximally-consistent base $\mathscr{B}$, the following hold:

\begin{itemize}
    \item $\nVdash^\gamma_{\mathscr{B},\mathfrak{R}} \bot$,
    \item $ \Vdash^\gamma_{\mathscr{B},\mathfrak{R}} \phi \to \psi$ iff $\nVdash^\gamma_{\mathscr{B},\mathfrak{R}} \phi$ or $\Vdash^\gamma_{\mathscr{B},\mathfrak{R}} \psi$, and
    \item $\Vdash^\gamma_{\mathscr{B},\mathfrak{R}} \square \phi $ iff for all $ \mathscr{C}$ s.t. $\mathfrak{R}\mathscr{B}\mathscr{C}, \Vdash^\gamma_{\mathscr{C},\mathfrak{R}} \phi$
\end{itemize}

\end{lemma}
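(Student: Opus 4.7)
The plan is to exploit the key structural fact about maximally-consistent bases: any proper superset of such a base is inconsistent. Combined with \textit{ex falso quodlibet} (Lemma \ref{EFQ}) and condition $(a)$ of Definition \ref{modalRelation}, this will collapse the quantification over supersets and over relational successors into tractable cases.

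For the first clause, $\nVdash^\gamma_{\mathscr{B},\mathfrak{R}} \bot$ is immediate: by Definition \ref{MaxConDef} maximally-consistent means consistent, and the clause for $\bot$ in Definition \ref{EXTValidity} does not mention $\mathfrak{R}$, so the classical notion of (in)consistency transfers directly.

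For the second clause, I would first prove the following observation: if $\mathscr{C}\supsetneq\mathscr{B}$ then $\mathscr{C}$ is inconsistent. Indeed, pick any $\delta\in\mathscr{C}\setminus\mathscr{B}$; by maximal consistency $\mathscr{B}\cup\{\delta\}$ is inconsistent, and by modal monotonicity (Lemma \ref{ModalMonotonicity}) so is $\mathscr{C}$. The left-to-right direction of the biconditional is routine: assuming $\Vdash \phi\to\psi$ and $\Vdash \phi$, taking $\mathscr{C}=\mathscr{B}$ in the clause for $\to$ yields $\Vdash\psi$. For the right-to-left direction, suppose $\nVdash\phi$ or $\Vdash\psi$. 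Let $\mathscr{C}\supseteq\mathscr{B}$ with $\Vdash_{\mathscr{C},\mathfrak{R}}\phi$. If $\mathscr{C}\supsetneq\mathscr{B}$, the observation gives inconsistency, and Lemma \ref{EFQ} gives $\Vdash_{\mathscr{C},\mathfrak{R}}\psi$. If $\mathscr{C}=\mathscr{B}$, the case $\nVdash\phi$ is impossible by assumption on $\mathscr{C}$, so $\Vdash\psi$ must hold, hence $\Vdash_{\mathscr{C},\mathfrak{R}}\psi$.

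For the third clause, the left-to-right direction again just takes $\mathscr{C}=\mathscr{B}$ in the definition of $\Vdash\square\phi$. For the right-to-left direction, assume $\Vdash^\gamma_{\mathscr{C}',\mathfrak{R}}\phi$ for every $\mathscr{C}'$ with $\mathfrak{R}\mathscr{B}\mathscr{C}'$; I must verify the same for every $\mathscr{C}\supseteq\mathscr{B}$ and every $\mathscr{C}'$ with $\mathfrak{R}\mathscr{C}\mathscr{C}'$. When $\mathscr{C}=\mathscr{B}$, this is exactly the hypothesis. When $\mathscr{C}\supsetneq\mathscr{B}$, the observation above gives $\Vdash_{\mathscr{C},\mathfrak{R}}\bot$; by condition $(a)$ of Definition \ref{modalRelation}, every $\mathscr{C}'$ with $\mathfrak{R}\mathscr{C}\mathscr{C}'$ satisfies $\Vdash_{\mathscr{C}',\mathfrak{R}}\bot$, and Lemma \ref{EFQ} then yields $\Vdash_{\mathscr{C}',\mathfrak{R}}\phi$.

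The only mildly subtle point is the modal case, where we genuinely need condition $(a)$ to push inconsistency of $\mathscr{C}$ forward along $\mathfrak{R}$; without it the quantification over $\mathscr{C}'$ in the $\square$-clause could not be controlled at inconsistent supersets. Everything else is a direct adaptation of the classical argument behind Lemma \ref{ClassicalBehaviour}.
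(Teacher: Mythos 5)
Your proposal is correct and follows essentially the same route as the paper: the classical clauses are handled as in Lemma \ref{ClassicalBehaviour}, and the $\square$-case reduces the quantification over supersets by noting that every proper superset of a maximally-consistent base is inconsistent, so that condition $(a)$ of Definition \ref{modalRelation} together with Lemma \ref{EFQ} makes all their $\mathfrak{R}$-successors trivially satisfy $\phi$. You merely spell out the superset-inconsistency observation and the classical biconditional in more detail than the paper does.
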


\begin{proof}
    The classical connectives follow from the same strategy as in Lemma \ref{ClassicalBehaviour}. What is left to show is the modal case for $\phi=\square\psi$. Simply note that for all $\mathscr{D}\supset\mathscr{B}$, $\Vdash^\gamma_{\mathscr{D},\mathfrak{R}} \bot$ and so by $(a)$ of Definition \ref{modalRelation}, all $\mathscr{E}$ s,t, $\mathfrak{R}\mathscr{D}\mathscr{E}$ are inconsistent as well and so $\Vdash^\gamma_{\mathscr{E},\mathfrak{R}} \psi$. So the only bases relevant are the bases $\mathscr{C}$ s.t. $\mathfrak{R}\mathscr{B}\mathscr{C}$. 
    \qed
\end{proof}

We can also adapt Lemma \ref{AlmostMaxCon} for the modal case, which will be required for the soundness proof for reflexive $\gamma$.

\begin{lemma}
    \label{BelowMaxCon}
   For all bases $\mathscr{B}$ and $\mathscr{C}$ s.t. $\mathscr{C}$ is the only base that is a maximally-consistent superset of $\mathscr{B}$ and for all $\phi$, 
    \vspace{.2cm}
   
     $\Vdash^\gamma_{\mathscr{B},\mathfrak{R}} \phi$ iff $\Vdash^\gamma_{\mathscr{C}, \mathfrak{R}} \phi$.
\end{lemma}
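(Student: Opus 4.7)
The plan is to emulate Lemma \ref{AlmostMaxCon} by structural induction on $\phi$. The left-to-right direction is immediate from Lemma \ref{ModalMonotonicity}, since $\mathscr{B}\subseteq\mathscr{C}$. For the right-to-left direction I prove the contrapositive: if $\nVdash^\gamma_{\mathscr{B},\mathfrak{R}}\phi$ then $\nVdash^\gamma_{\mathscr{C},\mathfrak{R}}\phi$. The atomic case and the $\bot$ case are handled verbatim as in the classical proof, by running the construction of Lemma \ref{MaxCon} from $\mathscr{B}$; the resulting maximally-consistent base must coincide with $\mathscr{C}$ by the uniqueness hypothesis. For the implicational case $\phi=\psi\to\chi$, I extract a witness $\mathscr{D}\supseteq\mathscr{B}$ with $\Vdash_{\mathscr{D},\mathfrak{R}}\psi$ and $\nVdash_{\mathscr{D},\mathfrak{R}}\chi$; Lemma \ref{EFQ} forces $\mathscr{D}$ to be consistent, uniqueness places $\mathscr{D}\subseteq\mathscr{C}$, and the induction hypothesis combined with monotonicity transports this witness up to $\mathscr{C}$.

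The genuinely new case is $\phi=\square\psi$. Unfolding $\nVdash^\gamma_{\mathscr{B},\mathfrak{R}}\square\psi$ produces $\mathscr{D}\supseteq\mathscr{B}$ and $\mathscr{E}$ with $\mathfrak{R}\mathscr{D}\mathscr{E}$ and $\nVdash^\gamma_{\mathscr{E},\mathfrak{R}}\psi$. By Lemma \ref{EFQ}, $\mathscr{E}$ is consistent, and the contrapositive of clause $(a)$ of Definition \ref{modalRelation} then forces $\mathscr{D}$ to be consistent as well; uniqueness of $\mathscr{C}$ as the only maximally-consistent superset of $\mathscr{B}$ gives $\mathscr{D}\subseteq\mathscr{C}$. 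I now apply clause $(c)$ iteratively: starting from $\mathscr{D}_0=\mathscr{D}$, whenever $\mathscr{D}_\alpha\neq\mathscr{C}$ the base $\mathscr{D}_\alpha$ is not maximally-consistent (uniqueness again), so $(c)$ supplies a strict extension $\mathscr{D}_{\alpha+1}\supsetneq\mathscr{D}_\alpha$ with $\mathfrak{R}\mathscr{D}_{\alpha+1}\mathscr{E}$; this extension is again consistent (else $\mathscr{E}$ would be inconsistent by $(a)$) and therefore still contained in $\mathscr{C}$. Obtaining $\mathfrak{R}\mathscr{C}\mathscr{E}$ this way provides $(\mathscr{C},\mathscr{E})$ as a witness for $\nVdash^\gamma_{\mathscr{C},\mathfrak{R}}\square\psi$, since any proper superset of $\mathscr{C}$ is inconsistent and hence by $(a)$ has only inconsistent $\mathfrak{R}$-successors, which automatically satisfy $\psi$.

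The main obstacle is making the $(c)$-iteration actually converge to $\mathscr{C}$. Successor stages are straightforward, but Definition \ref{modalRelation} contains no continuity clause guaranteeing that $\mathfrak{R}\mathscr{D}_\lambda\mathscr{E}$ holds at a limit $\lambda$ for $\mathscr{D}_\lambda=\bigcup_{\alpha<\lambda}\mathscr{D}_\alpha$. The argument must therefore lean heavily on the uniqueness hypothesis: any terminal point of the iteration is maximally-consistent by $(c)$ and so must equal $\mathscr{C}$, and no competing maximally-consistent base is available to be reached instead. Turning this qualitative observation into a rigorous termination argument, likely via a well-ordering of $\mathscr{C}\setminus\mathscr{D}$ together with a Zorn-style maximality step inside the interval $[\mathscr{D},\mathscr{C}]$, is the technical heart of the $\square$ case.
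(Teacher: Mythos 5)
Your proof takes essentially the same route as the paper's: Lemma \ref{ModalMonotonicity} for the left-to-right direction, the classical cases carried over verbatim from Lemma \ref{AlmostMaxCon}, and for $\square\psi$ a transfer of accessibility between $\mathscr{B}$ and $\mathscr{C}$ via clauses $(c)$ and $(d)$ of Definition \ref{modalRelation}. The only real difference is one of direction: you argue the $\square$ case contrapositively, pushing a witness pair $(\mathscr{D},\mathscr{E})$ with $\mathfrak{R}\mathscr{D}\mathscr{E}$ and $\nVdash^\gamma_{\mathscr{E},\mathfrak{R}}\psi$ up to $(\mathscr{C},\mathscr{E})$ by iterating $(c)$, whereas the paper argues directly that $\Vdash^\gamma_{\mathscr{C},\mathfrak{R}}\square\psi$ implies $\Vdash^\gamma_{\mathscr{B},\mathfrak{R}}\square\psi$ because, by $(d)$, every $\mathfrak{R}$-successor of $\mathscr{C}$ is a successor of $\mathscr{B}$ and, by $(c)$, conversely. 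Both versions turn on exactly the same claim: that $\mathfrak{R}\mathscr{D}\mathscr{E}$ for consistent $\mathscr{D}$ with $\mathscr{B}\subseteq\mathscr{D}\subseteq\mathscr{C}$ entails $\mathfrak{R}\mathscr{C}\mathscr{E}$.

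The obstacle you flag at the end is genuine, and you should know that the paper does not resolve it either: its proof simply asserts that ``it follows from $(c)$'' that no base is accessible from $\mathscr{B}$ without being accessible from $\mathscr{C}$. Clause $(c)$ only supplies, at each successor stage, \emph{some} strictly larger base still related to $\mathscr{E}$ (consistent, as you note, by $(a)$ and the consistency of $\mathscr{E}$, hence contained in $\mathscr{C}$ by uniqueness); nothing in Definition \ref{modalRelation} guarantees that the union of an increasing chain of bases related to $\mathscr{E}$ is itself related to $\mathscr{E}$, and $(d)$ points in the wrong direction. A Zorn-style argument on the interval $[\mathscr{D},\mathscr{C}]$ therefore stalls exactly where you say it does: chains need upper bounds inside the set $\{\mathscr{D}' : \mathfrak{R}\mathscr{D}'\mathscr{E}\}$, and that is precisely the missing continuity. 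So you have not introduced a gap; you have located one that the paper's own proof shares silently. For the one place the lemma is actually invoked---the soundness construction, where $\mathfrak{R}$ is generated by downward closure from relations out of maximally-consistent bases, so that accessibility from any consistent $\mathscr{D}$ is inherited from a maximally-consistent superset---the transfer is immediate and the gap is harmless; but for an arbitrary modal relation the argument would seem to require an additional chain-continuity condition on $\mathfrak{R}$, or a strengthening of clause $(c)$, to close.
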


\begin{proof}
    This proof follows the same strategy as the proof of Lemma \ref{AlmostMaxCon} for classical base-extension semantics.
    The left-to-right direction follows from Lemma \ref{ModalMonotonicity}.
    For the right-to-left direction, note that only the case of $\phi = \square \psi$ is new from Lemma \ref{AlmostMaxCon}. The other cases proceed as before.

     Suppose $\Vdash^\gamma_{\mathscr{C}, \mathfrak{R}} \square\psi$, so that for all $\mathscr{E}$ s.t. $\mathfrak{R}\mathscr{C}\mathscr{E}$ $\Vdash^\gamma_{\mathscr{E}, \mathfrak{R}} \psi$. By $(d)$ of Definition \ref{modalRelation}, it follows that $\mathfrak{R}\mathscr{B}\mathscr{E}$. Since $\mathscr{C}$ is the only maximally-consistent base s.t. $\mathscr{C}\supseteq\mathscr{B}$, it follows from $(c)$ of the same definition that there is no base $\mathscr{F}$ s.t. $\mathfrak{R}\mathscr{B}\mathscr{F}$ but not $\mathfrak{R}\mathscr{C}\mathscr{F}$ and, for all $\mathscr{D}$ s.t $\mathscr{C}\supseteq\mathscr{D}\supseteq\mathscr{B}$ and all bases $\mathscr{G}$, $\mathfrak{R}\mathscr{F}\mathscr{G}$ iff $\mathfrak{R}\mathscr{C}\mathscr{G}$ iff $\mathfrak{R}\mathscr{B}\mathscr{G}$. Since for all those $\mathscr{G}$ we have $\Vdash^\gamma_{\mathscr{G}, \mathfrak{R}}\psi$, we get $\Vdash^\gamma_{\mathscr{B}, \mathfrak{R}} \square\psi$. 
     \qed
\end{proof}

\section{Soundness and Completeness}
\label{sec:Completeness}

With our proof-theoretic semantics established, it remains to show that it is complete and sound with respect to both the proof system and the corresponding Kripke semantics of modal logic. We do so for the modal logics $K$, $KT$, $K4$, and $S4$.  We address problems with euclidean modal logics in Section \ref{sec:Euclidean}.

\begin{theorem}
    \label{Completeness}
    The following are equivalent for $\gamma = K, KT, K4$, or $S4$ and all  $\phi$:
    \begin{enumerate}
        \item  Validity in B-eS: $\phi$ is valid in the base-extension semantics for modal logic $\gamma$,
        \item  Validity in Kripke semantics: $\phi$ is valid in the Kripke semantics for $\gamma$,
        \item  Theorem: $\phi$ is a theorem of the Hilbert system for $\gamma$.
    \end{enumerate}
\end{theorem}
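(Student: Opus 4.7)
The plan is to use Kripke completeness (Theorem \ref{KripkeComp}), which already gives $(2)\Leftrightarrow(3)$ for each $\gamma\in\{K,KT,K4,S4\}$, as a bridge, and to close the triangle by proving $(3)\Rightarrow(1)$ and $(1)\Rightarrow(2)$ separately. The first of these is the \emph{completeness} of the B-eS with respect to the Hilbert system (every theorem is B-eS valid); the second is \emph{soundness} (every B-eS valid formula is Kripke valid), proved by contraposition as indicated in the introduction.

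For $(3)\Rightarrow(1)$ I would split the work into a classical part and a modal part. For the classical part, I would introduce a natural deduction system for classical propositional logic and adapt the arguments of Sandqvist (\cite{Sandqvist2005,Sandqvist2009}) to show that every natural-deduction derivable formula is valid in the B-eS; as a consequence, the Hilbert axioms 1--3 and the rule MP are verified. For the modal part I would check each remaining ingredient directly from Definition \ref{EXTValidity}. The rule NEC is immediate: if $\Vdash^\gamma\phi$, then for every $\mathscr{B},\mathfrak{R}$, every $\mathscr{C}\supseteq\mathscr{B}$, and every $\mathscr{C'}$ with $\mathfrak{R}\mathscr{C}\mathscr{C'}$, one has $\Vdash^\gamma_{\mathscr{C'},\mathfrak{R}}\phi$, hence $\Vdash^\gamma_{\mathscr{B},\mathfrak{R}}\square\phi$. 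Axiom $K$ is a routine chase through the clauses for $\to$ and $\square$, using Lemma \ref{ModalMonotonicity} to propagate along supersets. Axiom $T$ on reflexive $\mathfrak{R}$ uses the witness $\mathfrak{R}\mathscr{C}\mathscr{C}$; axiom $4$ on transitive $\mathfrak{R}$ uses condition (d) of Definition \ref{modalRelation} to pull a relation $\mathfrak{R}\mathscr{F}\mathscr{G}$ down to $\mathfrak{R}\mathscr{E}\mathscr{G}$ whenever $\mathscr{E}\subseteq\mathscr{F}$ and then composes with transitivity to recover the required relation from the outer base. For $S4$ both verifications are combined.

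For $(1)\Rightarrow(2)$ I would proceed by contraposition, mimicking Makinson's strategy. Suppose $M,w\nvDash^\gamma\phi$ for some $\gamma$-model $M=\langle W,R,V\rangle$ and world $w$. For each $v\in W$, build a maximally-consistent base $\mathscr{B}^v$ via the construction in Lemma \ref{MaxCon}, starting from enough atomic rules to force $p\in\overline{\mathscr{B}^v}$ exactly when $v\in V(p)$. Then define $\mathfrak{R}$ on $\Omega$ so that, on the family $\{\mathscr{B}^v\}_{v\in W}$, it mirrors $R$, and so that subsets and inconsistent bases are handled precisely as required by (a)--(d) of Definition \ref{modalRelation}. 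The key step is an induction on $\phi$ establishing
\[
    M,v\vDash^\gamma\phi \;\;\text{iff}\;\; \Vdash^\gamma_{\mathscr{B}^v,\mathfrak{R}}\phi,
\]
where the propositional cases reduce to Lemma \ref{ModalBehaviour} (since each $\mathscr{B}^v$ is maximally-consistent) and the modal case uses the third clause of Lemma \ref{ModalBehaviour} together with the fact that $\mathfrak{R}$ mirrors $R$ on the $\mathscr{B}^v$. In particular $\Vdash^\gamma_{\mathscr{B}^w,\mathfrak{R}}\phi$ fails, so $\phi$ is not B-eS valid.

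The main obstacle will be verifying that the relation $\mathfrak{R}$ constructed in the soundness step is genuinely a $\gamma$-modal relation. Conditions (a) and (b) force a careful, bespoke treatment of inconsistent bases that never appear in the Kripke model, and conditions (c) and (d) demand that $\mathfrak{R}$ behave coherently not just on the designated maximally-consistent witnesses but on all bases below them in the subset order. Condition (c) is the most delicate, since it interacts directly with maximal consistency: for a consistent but not-yet-maximally-consistent $\mathscr{B}$, every $\mathscr{C}$ with $\mathfrak{R}\mathscr{B}\mathscr{C}$ must be justified by a strictly larger $\mathscr{D}\supset\mathscr{B}$ with $\mathfrak{R}\mathscr{D}\mathscr{C}$, which constrains how we may extend $\mathfrak{R}$ off the family $\{\mathscr{B}^v\}$. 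In parallel one must check that the $\gamma$-frame condition on $R$ transfers intact to $\mathfrak{R}$ (reflexivity for $KT$, transitivity for $K4$, both for $S4$); these transfers are straightforward once the backbone of $\mathfrak{R}$ has been defined on the $\mathscr{B}^v$, and the proof for $S4$ composes the arguments for $KT$ and $K4$ without further work.
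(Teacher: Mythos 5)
Your overall architecture is exactly the paper's: use Kripke completeness for $(2)\Leftrightarrow(3)$, prove $(3)\Rightarrow(1)$ by validating the classical fragment via a Sandqvist-style natural deduction argument plus direct verification of NEC, $K$, $T$, and $4$, and prove $(1)\Rightarrow(2)$ by contraposition with a Makinson-style construction of maximally-consistent bases mirroring the worlds of a countermodel. The modal-axiom verifications you sketch (reflexive witness for $T$, condition $(d)$ plus transitivity for $4$) are the ones the paper uses.

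There is, however, one concrete gap in your soundness step. You propose to build, for each world $v$, a maximally-consistent base $\mathscr{B}^v$ forcing exactly the atoms true at $v$, and then to let $\mathfrak{R}$ ``mirror $R$'' on the family $\{\mathscr{B}^v\}$. But a Kripke countermodel may contain distinct worlds $u\neq v$ with identical valuations, and two maximally-consistent bases deriving the same atoms coincide, so the map $v\mapsto\mathscr{B}^v$ need not be injective; if $u$ and $v$ have different $R$-successors, ``mirroring $R$'' is then ill-defined and the induction for $\square$ breaks. The paper repairs this by first passing to a modified model $M'$ in which each world $v$ is tagged with a fresh atom $q_v$ not occurring in $\phi$ (with $V'(q_v)=W\setminus\{v\}$), so that all the $\mathscr{B}^v$ are pairwise distinct while $M',w\nvDash^\gamma\phi$ is preserved. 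Separately, you correctly identify condition $(c)$ of Definition \ref{modalRelation} as the delicate point, but you leave it unresolved: in the reflexive case mere reflexivity of $\mathfrak{R}$ does not give $(c)$, and the paper needs the additional closure clauses relating non-maximal bases to their unique maximally-consistent extensions, together with Lemma \ref{BelowMaxCon}, to push the induction for $\square$ through at the subsets of the $\mathscr{B}^v$ that reflexivity forces into the relation.
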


The equivalence between (2. Validity in Kripke semantics) and (3. Theorem) is well established; see, for example, \cite{Blackburn2001}. So we focus on how our base-extension semantics fits into the picture. 
We adapt the proof \cite{Sandqvist2009} to show that being a theorem of the modal logic implies being valid in our base-extension semantics (i.e., Theorem implies Validity in B-eS). Following \cite{Sandqvist2009}, we show that classical tautologies are valid in our base-extension semantics using a natural deduction proof system rather than a Hilbert system.

In \cite{Sandqvist2009}, Sandqvist gives a proof of classicality (corresponding to what we call completeness) between a natural deduction system and classical base-extension semantics. For modal logic, however, we use a Hilbert proof system. This is because the modular nature of the Hilbert system for modal logic allows us to establish the completeness of different modal logics by analysing different modal axioms. Fortunately, we can adapt Sandqvist's proof for the classical fragment of our modal base-extension semantics using the equivalence between natural deduction and Hilbert proof systems for classical logic. We define a natural deduction proof system for classical logic that is equivalent to the classical portion of our Hilbert system.

The reader might wonder about a proof of the classical portion of completeness directly with the Hilbert system. Such a proof can be found in the Appendix. 
We are taking this, technically unnecessary, step through natural deduction to highlight the connection of this work to the existing work on base-extension semantics and specifically to the semantics for classical logics of \cite{Sandqvist2005} and \cite{Sandqvist2009}. Modal relations and the resulting modal base-extension semantics are a conservative extension of the classical semantics and so the relevant proofs still follow in the same way they did before. Our work is very much an addition to the existing classical semantics that does not change the classical fragments of the semantics.

\begin{definition} \label{def:classical-nd}
The natural deduction system for classical propositional logic is given by the following rules:

\begin{center}
\AxiomC{$[\phi]$}
\noLine
\UnaryInfC{$\psi$}
\RightLabel{$(\to I)$}
\UnaryInfC{$\phi\to\psi$}
\DisplayProof
\quad
\AxiomC{$\phi\to \psi$}
\AxiomC{$\phi$}
\RightLabel{$(\to E)$}
\BinaryInfC{$\psi$}
\DisplayProof
\end{center}

\begin{prooftree}
\AxiomC{$[\phi\to\bot]$}
\noLine
\UnaryInfC{$\bot$}
\RightLabel{$(\bot_c)$}
\UnaryInfC{$\phi$}
\end{prooftree}

If we just have the rules $(\to I)$ and $(\to E)$, we have a natural deduction system for minimal propositional logic.
  
\end{definition}

We require the following lemma:

\begin{lemma}
    \label{NDHilbert}
    A formula $\phi$ can be proven from $\Gamma$ using the Hilbert system for classical logic given in Definition \ref{HilbertSystem} iff it is provable from $\Gamma$, without further open assumptions, in the natural deduction proof system for classical logic given in Definition \ref{def:classical-nd}.
\end{lemma}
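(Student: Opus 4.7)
The plan is to prove each direction by induction on the structure of derivations, using standard translation techniques between Hilbert-style and natural deduction systems for classical logic. Since negation is defined as $\phi\to\bot$, the formulas of both systems live in a common language, so no translation on the syntax side is needed.

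For the forward direction (Hilbert to ND), I would verify that each Hilbert axiom admits a natural deduction derivation and that modus ponens is an instance of $(\to E)$. Axioms 1 and 2 admit short derivations using only $(\to I)$ and $(\to E)$ by repeated assumption and discharge. For Axiom 3, $(\neg\phi\to\neg\psi)\to(\psi\to\phi)$, the derivation uses $(\bot_c)$: assume $\neg\phi\to\neg\psi$ and $\psi$, further assume $\phi\to\bot$, apply $(\to E)$ twice to obtain $\bot$, discharge $\phi\to\bot$ via $(\bot_c)$ to conclude $\phi$, and then apply $(\to I)$ twice to close the remaining assumptions.

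For the backward direction (ND to Hilbert), the central tool is the Deduction Theorem for the Hilbert system: if $\Gamma,\phi\vdash_H \psi$, then $\Gamma\vdash_H \phi\to\psi$. This standard result relies only on Axioms 1, 2, and MP, and is proven by induction on the Hilbert derivation. Given this, I would induct on the ND derivation of $\phi$ from $\Gamma$. Open assumptions and axiom instances lift trivially, $(\to E)$ is just MP, and $(\to I)$ is precisely what the Deduction Theorem provides. For $(\bot_c)$, a subderivation $\Gamma,\phi\to\bot \vdash_{ND} \bot$ yields by induction hypothesis $\Gamma,\phi\to\bot \vdash_H \bot$, hence $\Gamma \vdash_H (\phi\to\bot)\to\bot$ by the Deduction Theorem, from which $\Gamma \vdash_H \phi$ follows by double negation elimination plus MP.

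The main obstacle I expect is establishing double negation elimination, $\neg\neg\phi\to\phi$, as a theorem of the Hilbert system from Axioms 1, 2, 3, and MP. The decisive move is the instance of Axiom 3 given by $(\neg\phi\to\neg\neg\neg\phi)\to(\neg\neg\phi\to\phi)$, combined with the auxiliary theorem $\alpha\to\neg\neg\alpha$, itself derivable from the axioms by another use of Axiom 3 together with straightforward manipulations of Axioms 1 and 2. This is entirely standard but is the only part of the argument that is not pure translation bookkeeping; every other case of the induction amounts to quoting the right axiom instance and chaining MP or the Deduction Theorem.
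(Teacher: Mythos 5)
Your proposal is correct and is exactly the standard argument: the paper itself does not prove this lemma but simply cites Mendelson and Prawitz for the two systems, and the translation you sketch (axioms 1--3 derivable in ND with $(\bot_c)$ handling axiom 3, and conversely the Deduction Theorem plus derived double negation elimination handling $(\to I)$ and $(\bot_c)$) is precisely the classical equivalence those references supply. No gaps; your sketch just makes explicit what the paper leaves to the literature.
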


\noindent For the Hilbert system see \cite{Mendelson1964} and for natural deduction \cite{Prawitz1965}.

Having set up the necessary background, we proceed by showing that the rules of the natural deduction system for minimal logic and the structural conditions of reflexivity of implication (R), thinning (weakening),  contraction, interchange (collectively, S), and cut (C) (see \cite{Gentzen1935} for these) hold for our base-extension semantics.

\begin{lemma}
\label{MinimalLogic}
        The following hold for all modal logics $\gamma$, $\gamma$-modal relations $\mathfrak{R}$, and bases $\mathscr{B}$:

    \begin{itemize}
        \item (R) $\phi\Vdash^\gamma_{\mathscr{B},\mathfrak{R}} \phi$
        \item (S) if $\Gamma\Vdash^\gamma_{\mathscr{B},\mathfrak{R}} \phi$ and $\Gamma\subseteq\Delta$, then $\Delta\Vdash^\gamma_{\mathscr{B},\mathfrak{R}}\phi$
        \item (C) if $\Gamma\Vdash^\gamma_{\mathscr{B},\mathfrak{R}} \phi$ and $\Gamma, \phi \Vdash^\gamma_{\mathscr{B},\mathfrak{R}}\psi$, then $\Gamma\Vdash^\gamma_{\mathscr{B},\mathfrak{R}}\psi$
        \item ($\to$E) $\phi\to\psi, \phi \Vdash^\gamma_{\mathscr{B},\mathfrak{R}} \psi$
        \item ($\to$I) if $\Gamma,\phi\Vdash^\gamma_{\mathscr{B},\mathfrak{R}}\psi$, then $\Gamma\Vdash^\gamma_{\mathscr{B},\mathfrak{R}}\phi\to\psi$
    \end{itemize}
\end{lemma}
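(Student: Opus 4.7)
The plan is to unfold the definition of the consequence relation $\Gamma \Vdash^\gamma_{\mathscr{B},\mathfrak{R}} \phi$ from Definition \ref{EXTValidity} and observe that each of the five clauses follows by straightforward book-keeping over supersets, with monotonicity (Lemma \ref{ModalMonotonicity}) required only in a single spot. I would handle the five items in the order listed, since each builds on the familiar pattern of the previous one. Importantly, the modal operator $\square$ plays no role in any of the statements, so none of the clauses of Definition \ref{modalRelation} concerning $\mathfrak{R}$ need to be invoked.

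For (R), the conclusion is that every $\mathscr{C}\supseteq\mathscr{B}$ at which $\phi$ holds is one at which $\phi$ holds, which is immediate. For (S), given any $\mathscr{C}\supseteq\mathscr{B}$ that supports every formula in $\Delta$, it in particular supports every formula in $\Gamma\subseteq\Delta$, and the assumed consequence $\Gamma\Vdash^\gamma_{\mathscr{B},\mathfrak{R}}\phi$ gives $\Vdash^\gamma_{\mathscr{C},\mathfrak{R}}\phi$. For (C), fix $\mathscr{C}\supseteq\mathscr{B}$ with $\Vdash^\gamma_{\mathscr{C},\mathfrak{R}}\chi$ for each $\chi\in\Gamma$; the first hypothesis yields $\Vdash^\gamma_{\mathscr{C},\mathfrak{R}}\phi$, so $\mathscr{C}$ supports $\Gamma\cup\{\phi\}$, and the second hypothesis delivers $\Vdash^\gamma_{\mathscr{C},\mathfrak{R}}\psi$.

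For ($\to$E), given any $\mathscr{C}\supseteq\mathscr{B}$ with $\Vdash^\gamma_{\mathscr{C},\mathfrak{R}}\phi\to\psi$ and $\Vdash^\gamma_{\mathscr{C},\mathfrak{R}}\phi$, the clause for $\to$ in Definition \ref{EXTValidity} unfolds the first as $\phi\Vdash^\gamma_{\mathscr{C},\mathfrak{R}}\psi$, and instantiating this with $\mathscr{C}$ itself using the second gives $\Vdash^\gamma_{\mathscr{C},\mathfrak{R}}\psi$. The only step requiring a little extra care is ($\to$I). Here I would fix an arbitrary $\mathscr{C}\supseteq\mathscr{B}$ supporting $\Gamma$ and aim to show $\Vdash^\gamma_{\mathscr{C},\mathfrak{R}}\phi\to\psi$, which unfolds to: for every $\mathscr{D}\supseteq\mathscr{C}$ with $\Vdash^\gamma_{\mathscr{D},\mathfrak{R}}\phi$, also $\Vdash^\gamma_{\mathscr{D},\mathfrak{R}}\psi$. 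For such a $\mathscr{D}$, I would invoke Lemma \ref{ModalMonotonicity} to lift the validity of each $\chi\in\Gamma$ from $\mathscr{C}$ up to $\mathscr{D}$, so that $\mathscr{D}\supseteq\mathscr{B}$ supports all of $\Gamma\cup\{\phi\}$; the hypothesis $\Gamma,\phi\Vdash^\gamma_{\mathscr{B},\mathfrak{R}}\psi$ then closes the argument.

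There is essentially no obstacle here: the result is a routine sanity check that the consequence relation induced by Definition \ref{EXTValidity} behaves as expected with respect to the minimal-logic rules and the structural principles. The only non-trivial ingredient is the appeal to monotonicity in ($\to$I), which is already in hand.
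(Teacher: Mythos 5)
Your proposal is correct and follows essentially the same route as the paper, which simply notes that these clauses follow from Definition \ref{EXTValidity} and Lemma \ref{ModalMonotonicity} by the arguments of the classical case, since nothing depends on the internal structure of the formulas involved. You have merely written out the details that the paper delegates to the citation, including the one genuine use of monotonicity in ($\to$I).
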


\begin{proof}
    Using the methods of the proof for classical base-extension semantics in \cite{Sandqvist2009}, these follow from Definition \ref{EXTValidity} and Lemma \ref{ModalMonotonicity}. There is nothing in these proofs that depends on the internal structures of the formulas $\phi$ or $\psi$ and so these will still hold when they are modal formulas rather than just propositional ones.
    \qed
\end{proof}

This tells us that the theorems of minimal logic are valid in our semantics. To recover classical logic, it remains to show that the base-extension semantics respects ($\bot_c$).

\begin{lemma}
    \label{DoubleNegation}
            For all $\gamma$, $\gamma$-modal relations $\mathfrak{R}$, and bases $\mathscr{B}$:

\vspace{.2cm}

    ($\bot_c$) $(\phi\to\bot)\to\bot\Vdash^\gamma_{\mathscr{B},\mathfrak{R}}\phi$
\end{lemma}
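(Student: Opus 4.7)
The goal $(\phi \to \bot) \to \bot \Vdash^\gamma_{\mathscr{B}, \mathfrak{R}} \phi$ unfolds to requiring, for every $\mathscr{C} \supseteq \mathscr{B}$ with $\Vdash^\gamma_{\mathscr{C}, \mathfrak{R}} (\phi \to \bot) \to \bot$, that $\Vdash^\gamma_{\mathscr{C}, \mathfrak{R}} \phi$. My plan is to argue by contradiction, following Makinson's template for classical $(\bot_c)$: fix such a $\mathscr{C}$, suppose $\nVdash^\gamma_{\mathscr{C}, \mathfrak{R}} \phi$, and produce a maximally-consistent $\mathscr{C}^* \supseteq \mathscr{C}$ with $\nVdash^\gamma_{\mathscr{C}^*, \mathfrak{R}} \phi$ by lifting Lemma \ref{MaxCon} to the modal language. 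Lemma \ref{ModalMonotonicity} transports the hypothesis to $\mathscr{C}^*$. The first bullet of Lemma \ref{ModalBehaviour} yields $\nVdash^\gamma_{\mathscr{C}^*, \mathfrak{R}} \bot$, so the second bullet of that lemma, applied to $\Vdash^\gamma_{\mathscr{C}^*, \mathfrak{R}} (\phi \to \bot) \to \bot$, forces $\nVdash^\gamma_{\mathscr{C}^*, \mathfrak{R}} \phi \to \bot$; a second application of that same bullet then delivers $\Vdash^\gamma_{\mathscr{C}^*, \mathfrak{R}} \phi$, contradicting the choice of $\mathscr{C}^*$.

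The real work lies in extending Lemma \ref{MaxCon} beyond its stated propositional scope, since $\phi$ may now contain $\square$. The atomic, $\bot$, and $\psi \to \chi$ cases of that lemma's induction carry over essentially verbatim, substituting Lemma \ref{ModalMonotonicity} for Lemma \ref{ClassicalMonotonicity}. The genuinely new case $\phi = \square\psi$ is the main obstacle: from $\nVdash^\gamma_{\mathscr{C}, \mathfrak{R}} \square\psi$ one extracts some $\mathscr{D} \supseteq \mathscr{C}$ and $\mathscr{D}'$ with $\mathfrak{R}\mathscr{D}\mathscr{D}'$ and $\nVdash^\gamma_{\mathscr{D}', \mathfrak{R}} \psi$, but a naive rule-enumeration on $\mathscr{C}$ may destroy this witness at the limit because conditions $(c)$ and $(d)$ of Definition \ref{modalRelation} only guarantee downward preservation of $\mathfrak{R}$ along the subset order, not upward into arbitrary maximally-consistent completions. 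I would therefore run the enumeration on $\mathscr{C}$ while invariantly maintaining both consistency and the existence of a falsifying $\mathfrak{R}$-successor---using the induction hypothesis on $\psi$ to replace $\mathscr{D}'$ by a maximally-consistent base falsifying $\psi$---and then appeal to condition $(d)$ at the limit to recover the required $\mathfrak{R}$-link between $\mathscr{C}^*$ and the witness. Once $\mathscr{C}^*$ is in hand the three-line contradiction sketched above closes the proof.
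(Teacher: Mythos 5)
Your top-level argument is sound and takes a genuinely different route from the paper's: the paper first invokes Prawitz's reduction to confine $(\bot_c)$ to the cases $\phi = p$, $\square\psi$, and $\bot$, and then argues each case directly from the semantic clauses (using Lemma \ref{EFQ} and conditions $(c)$ and $(d)$ of Definition \ref{modalRelation}), never passing through maximally-consistent completions. You instead push all the work into a modal lift of Lemma \ref{MaxCon} and close with Lemma \ref{ModalBehaviour}; this is essentially the strategy of the paper's Appendix (Lemma \ref{lem: ModalMaxCon}), and your closing three-line contradiction is correct as far as it goes.

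The gap is in the one step where all the difficulty lives: the $\square\psi$ case of the lifted Lemma \ref{MaxCon}. You correctly diagnose that $\mathfrak{R}$ is only preserved downward along $\subseteq$, but the repair you propose --- ``appeal to condition $(d)$ at the limit to recover the required $\mathfrak{R}$-link between $\mathscr{C}^*$ and the witness'' --- points in exactly the wrong direction. Condition $(d)$ transfers $\mathfrak{R}\mathscr{B}\mathscr{C}$ to \emph{subsets} $\mathscr{D}\subseteq\mathscr{B}$, whereas $\mathscr{C}^*$ is a superset of every base in your enumeration; to obtain $\mathfrak{R}\mathscr{C}^*\mathscr{G}$ from $(d)$ you would need a superset of $\mathscr{C}^*$ already related to $\mathscr{G}$, and the only proper supersets of a maximally-consistent base are inconsistent, whose successors are forced to be inconsistent by $(a)$. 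The only upward-propagating condition is $(c)$, and it supplies one strictly larger base at a time with no control at limit stages: conditions $(a)$--$(d)$ do not rule out a relation in which the falsifying successor is reachable only from the members of an infinite strictly increasing chain of finite consistent bases (and their subsets), so that iterating $(c)$ never terminates at a maximally-consistent base and the union of the chain need not retain the link. Hence the invariant you want to maintain cannot be secured by the tools you cite, and without it your contradiction argument never gets started for $\square$-formulas. The paper's direct argument for the $\square\psi$ case sidesteps this entirely by never leaving the original base: it uses $(d)$ to pull successors of consistent supersets down to $\mathscr{B}$ and $(c)$ to argue that these exhaust the successors of $\mathscr{B}$, combined with Lemma \ref{EFQ} for the inconsistent extensions.
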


\begin{proof}
    Similarly to \cite{Sandqvist2009}, we do not show this for arbitrary $\phi$. We follow the strategy of \cite{Sandqvist2009} to reduce the number of instances of ($\bot_c$) we need to consider. In \cite{Prawitz1971}, Prawitz shows that, given the rules of minimal logic, instances of ($\bot_c$) ranging over implication can be reduced to applications of (DN) over its component parts. So, given Lemma \ref{MinimalLogic}, we can reduce all instances of $(\bot_c)$ to either atomic sentences, boxed formulas, or falsum. So, we need only deal with the case in which $\phi = p$, $\square\psi$, or $\bot$.
    
    The proof of $(p\to\bot)\to\bot\Vdash_{\mathscr{B}} p$ of \cite{Sandqvist2009} can be straightforwardly generalized to $(p\to\bot)\to\bot\Vdash^\gamma_{\mathscr{B},\mathfrak{R}} p$, since for any $\mathscr{C}$, $\Vdash^\gamma_{\mathscr{C},\mathfrak{R}} p$ iff $\Vdash_{\mathscr{C}} p$.

    Next we show $(\square\psi\to\bot)\to\bot\Vdash^\gamma_{\mathscr{B},\mathfrak{R}} \square\psi$. That is, by Definition \ref{EXTValidity}, for all $\mathscr{C}\supseteq\mathscr{B}$ if $\Vdash^\gamma_{\mathscr{C},\mathfrak{R}} (\square\psi\to\bot)\to\bot$, then $\Vdash^\gamma_{\mathscr{C},\mathfrak{R}}\square\psi$. We can break $\Vdash^\gamma_{\mathscr{C},\mathfrak{R}} (\square\psi\to\bot)\to\bot$ down further to, for all $\mathscr{D}\supseteq\mathscr{C}$, either $\Vdash^\gamma_{\mathscr{D},\mathfrak{R}} \bot$ or $\nVdash^\gamma_{\mathscr{D},\mathfrak{R}} \square\psi\to\bot$. So we have two cases to consider:
    
    For $\Vdash^\gamma_{\mathscr{D},\mathfrak{R}} \bot$, it follows from Lemma \ref{EFQ} that $\Vdash^\gamma_{\mathscr{D},\mathfrak{R}} \square\psi$ and so, $\Vdash^\gamma_{\mathscr{B},\mathfrak{R}} \square\psi$. 
    
    If $\nVdash^\gamma_{\mathscr{D},\mathfrak{R}} \square\psi\to\bot$, then there exists an $\mathscr{E}\supseteq\mathscr{D}$ s.t. $\Vdash^\gamma_{\mathscr{E},\mathfrak{R}} \square\psi$ and $\nVdash^\gamma_{\mathscr{E},\mathfrak{R}} \bot$, so for all consistent $\mathscr{F}\supseteq \mathscr{E}$ and $\mathscr{G}$ s.t. $\mathfrak{R}\mathscr{F}\mathscr{G}$, $\Vdash^\gamma_{\mathscr{G},\mathfrak{R}} \psi$. By Definition \ref{modalRelation} $(d)$, we know that $\mathfrak{R}\mathscr{B}\mathscr{G}$ and since this holds for all consistent $\mathscr{D}\supset\mathscr{B}$, it follows from $(c)$ of Definition \ref{modalRelation} that for all $\mathscr{G}$ s.t. $
    \mathfrak{R}\mathscr{B}\mathscr{G}$, $\Vdash^\gamma_{\mathscr{G},\mathfrak{R}} \psi$ and so, $\Vdash^\gamma_{\mathscr{B},\mathfrak{R}} \square\psi$.

     For $\phi=\bot$, note that for $\Vdash^\gamma_{\mathscr{C},\mathfrak{R}}(\bot\to\bot)\to\bot$, by Definition \ref{EXTValidity}, $\Vdash^\gamma_{\mathscr{C},\mathfrak{R}}\bot$ or $\nVdash^\gamma_{\mathscr{C},\mathfrak{R}}\bot\to\bot$. In the first case we have shown what we need and the second is impossible as $\bot\to\bot$ holds at all bases.
    \qed
\end{proof}

Lemmas \ref{MinimalLogic} and \ref{DoubleNegation} show us that every classical tautology is also valid in our modal base-extension semantics. Finally, consider  modal formulas. Every modal validity can be obtained from the classical tautologies and/or the modal axioms using the rules modus ponens and necessitation. So we just need to show that, in addition to the classical tautologies, the modal axioms and rules hold in our base-extension semantics. Here, it is important to take extra care about $\gamma$, as the choice of modal logic changes which axioms hold. The reasoning in these proofs is very close to similar reasoning for Kripke semantics. We start with the basic modal logic $K$.

\begin{lemma}
    \label{Kmodalaxioms}
    For any $\gamma$, the following hold:
    \begin{itemize}
        \item (MP) If $\Vdash^\gamma_{\mathscr{B},\mathfrak{R}} \phi\to\psi$ and $\Vdash^\gamma_{\mathscr{B},\mathfrak{R}} \phi$, then $\Vdash^\gamma_{\mathscr{B},\mathfrak{R}} \psi$ 
        \item (NEC) If $\Vdash^\gamma \phi$, then $\Vdash^\gamma \square\phi$
        \item (K) $\square(\phi\to\psi)\Vdash^\gamma\square\phi\to\square\psi$.
    \end{itemize}
\end{lemma}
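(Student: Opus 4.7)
The plan is to prove the three items in sequence by directly unpacking the semantic clauses in Definition \ref{EXTValidity} and leveraging Lemma \ref{ModalMonotonicity} where supersets come into play. None of the three requires the structural conditions $(a)$--$(d)$ of Definition \ref{modalRelation}; they are purely about the interaction of the $\to$-clause, the $\square$-clause, and persistence along $\supseteq$. This parallels the analogous Kripke-semantic proofs, with the set $\{\mathscr{C}'\mid\mathfrak{R}\mathscr{C}\mathscr{C}'\}$ replacing the set of $R$-successors of a world.

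For (MP), I would unpack $\Vdash^\gamma_{\mathscr{B},\mathfrak{R}} \phi\to\psi$ to $\phi\Vdash^\gamma_{\mathscr{B},\mathfrak{R}}\psi$, which by the $\Gamma$-clause means that for every $\mathscr{C}\supseteq\mathscr{B}$ with $\Vdash^\gamma_{\mathscr{C},\mathfrak{R}}\phi$ we have $\Vdash^\gamma_{\mathscr{C},\mathfrak{R}}\psi$. Instantiating $\mathscr{C}=\mathscr{B}$ and invoking the second hypothesis $\Vdash^\gamma_{\mathscr{B},\mathfrak{R}}\phi$ yields $\Vdash^\gamma_{\mathscr{B},\mathfrak{R}}\psi$. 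This is a one-line argument.

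For (NEC), I would assume $\Vdash^\gamma\phi$, fix an arbitrary base $\mathscr{B}$ and $\gamma$-modal relation $\mathfrak{R}$, and show $\Vdash^\gamma_{\mathscr{B},\mathfrak{R}}\square\phi$. By the $\square$-clause this reduces to showing $\Vdash^\gamma_{\mathscr{C}',\mathfrak{R}}\phi$ for every $\mathscr{C}\supseteq\mathscr{B}$ and every $\mathscr{C}'$ with $\mathfrak{R}\mathscr{C}\mathscr{C}'$. But this is immediate from the universal validity of $\phi$ at every base under $\mathfrak{R}$.

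For (K), the task is to show that for every $\mathscr{B}$ and every $\gamma$-modal $\mathfrak{R}$, if $\Vdash^\gamma_{\mathscr{B},\mathfrak{R}}\square(\phi\to\psi)$ then $\Vdash^\gamma_{\mathscr{B},\mathfrak{R}}\square\phi\to\square\psi$. Unpacking the consequent with the $\to$- and $\Gamma$-clauses, I need to pick an arbitrary $\mathscr{C}\supseteq\mathscr{B}$ with $\Vdash^\gamma_{\mathscr{C},\mathfrak{R}}\square\phi$ and derive $\Vdash^\gamma_{\mathscr{C},\mathfrak{R}}\square\psi$. For this, take any $\mathscr{D}\supseteq\mathscr{C}$ and any $\mathscr{D}'$ with $\mathfrak{R}\mathscr{D}\mathscr{D}'$. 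Since $\mathscr{D}\supseteq\mathscr{B}$, the hypothesis on $\square(\phi\to\psi)$ gives $\Vdash^\gamma_{\mathscr{D}',\mathfrak{R}}\phi\to\psi$, and since $\mathscr{D}\supseteq\mathscr{C}$, the assumption on $\square\phi$ gives $\Vdash^\gamma_{\mathscr{D}',\mathfrak{R}}\phi$. Applying (MP), just established, at $\mathscr{D}'$ yields $\Vdash^\gamma_{\mathscr{D}',\mathfrak{R}}\psi$. Since $\mathscr{D}$ and $\mathscr{D}'$ were arbitrary, $\Vdash^\gamma_{\mathscr{C},\mathfrak{R}}\square\psi$ holds. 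The only real subtlety is chaining the superset relation through $\mathscr{B}\subseteq\mathscr{C}\subseteq\mathscr{D}$ correctly so that both boxed hypotheses can be applied at the same successor $\mathscr{D}'$. No case analysis on $\gamma$ is needed, so the result is uniform across $K$, $KT$, $K4$ and $S4$.
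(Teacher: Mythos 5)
Your proposal is correct and matches the paper's argument in all essentials: (MP) by instantiating the $\to$-clause at $\mathscr{C}=\mathscr{B}$, (NEC) from the universal quantification over bases in the definition of $\gamma$-validity, and (K) by chaining $\mathscr{B}\subseteq\mathscr{C}\subseteq\mathscr{D}$ so that both boxed hypotheses apply at the same successor. The only cosmetic difference is that the paper phrases (NEC) and (K) contrapositively, whereas you argue directly; the underlying content is identical.
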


\begin{proof}
    To show that (MP) holds, note that because $\Vdash^\gamma_{\mathscr{B},\mathfrak{R}} \phi\to\psi$, for all $\mathscr{C}\supseteq\mathscr{B}$, if $\Vdash^\gamma_{\mathscr{C},\mathfrak{R}}\phi$, then $\Vdash^\gamma_{\mathscr{C},\mathfrak{R}}\psi$. By assumption, $\Vdash^\gamma_{\mathscr{B},\mathfrak{R}} \phi$ and 
    $\mathscr{B}\supseteq\mathscr{B}$, and so $\Vdash^\gamma_{\mathscr{B},\mathfrak{R}}\psi$. 

    To prove (NEC), we take the contrapositive. If $\nVdash^\gamma \square\phi$, then $\nVdash^\gamma \phi$ and suppose $\nVdash^\gamma\square\phi$. So, there is a $\mathscr{B}$ and an $\mathfrak{R}$ s.t. $\nVdash^\gamma_{\mathscr{B},\mathfrak{R}} \square\phi$. Therefore, there are $\mathscr{C}\supseteq\mathscr{B}$ and $\mathscr{D}$ s.t. $\mathfrak{R}\mathscr{C}\mathscr{D}$ and $\nVdash^\gamma_{\mathscr{D}, \mathfrak{R}} \phi$ and $\nVdash^\gamma \phi$.  

    By Definition \ref{EXTValidity}, it follows that (K) iff, for all $\mathscr{B}$ and $\mathfrak{R}$, if $\Vdash^\gamma_{\mathscr{B},\mathfrak{R}} \square(\phi\to\psi)$, then $\Vdash^\gamma_{\mathscr{B},\mathfrak{R}} \square\phi\to\square\psi$. Again, we show the contrapositive: for all $\mathscr{B}$ and $\mathfrak{R}$, if $\nVdash^\gamma_{\mathscr{B},\mathfrak{R}} \square\phi\to\square\psi$, then $\nVdash^\gamma_{\mathscr{B},\mathfrak{R}} \square(\phi\to\psi)$. Without loss of generality, take an arbitrary $\mathscr{B}$ such that $\nVdash^\gamma_{\mathscr{B},\mathfrak{R}} \square\phi\to\square\psi$. Then, there is a $\mathscr{C}\supseteq\mathscr{B}$ s.t. $\Vdash^\gamma_{\mathscr{C},\mathfrak{R}} \square\phi$ and $\nVdash^\gamma_{\mathscr{C},\mathfrak{R}} \square\psi$. So, there are $\mathscr{D}\supseteq\mathscr{C}$ and $\mathscr{E}$ s.t. $\mathfrak{R}\mathscr{D}\mathscr{E}$ with $\Vdash^\gamma_{\mathscr{E},\mathfrak{R}} \phi$ but $\nVdash^\gamma_{\mathscr{E},\mathfrak{R}} \psi$ and so $\nVdash^\gamma_{\mathscr{E},\mathfrak{R}} \phi\to\psi$ and $\nVdash^\gamma_{\mathscr{D},\mathfrak{R}} \square(\phi\to\psi)$. Finally, as $\mathscr{D}\supseteq\mathscr{B}$, $\nVdash^\gamma_{\mathscr{B},\mathfrak{R}} \square(\phi\to\psi)$.
          \qed
\end{proof}

This suffices for the modal logic $K$. What remains is to show that the appropriate modal axioms hold for the corresponding restrictions on $\mathfrak{R}$.

\begin{lemma}
    \label{OtherModalaxioms}
    For any $\gamma$, the following hold:
    \begin{itemize}
        \item If $\gamma$ includes reflexivity, then (T)  $\square\phi\Vdash^\gamma \phi$
        \item If $\gamma$ includes transitivity, then (4) $\square \phi\Vdash^\gamma \square\square\phi$.  
    \end{itemize}
    
\end{lemma}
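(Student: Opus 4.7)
The plan for (T) is purely definitional. Expanding the meta-turnstile, $\square\phi\Vdash^\gamma_{\mathscr{B},\mathfrak{R}}\phi$ reduces to showing that every $\mathscr{C}\supseteq\mathscr{B}$ with $\Vdash^\gamma_{\mathscr{C},\mathfrak{R}}\square\phi$ also satisfies $\Vdash^\gamma_{\mathscr{C},\mathfrak{R}}\phi$. Because $\mathfrak{R}$ is reflexive, $\mathfrak{R}\mathscr{C}\mathscr{C}$ holds, so instantiating the $\square$-clause of Definition \ref{EXTValidity} with the pair $(\mathscr{C},\mathscr{C})$ immediately yields $\Vdash^\gamma_{\mathscr{C},\mathfrak{R}}\phi$. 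No appeal to the clauses of Definition \ref{modalRelation} is required in this case.

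For (4) I would fix $\mathscr{B}$ and a transitive $\gamma$-modal relation $\mathfrak{R}$, take an arbitrary $\mathscr{C}\supseteq\mathscr{B}$ with $\Vdash^\gamma_{\mathscr{C},\mathfrak{R}}\square\phi$, and unfold the goal $\Vdash^\gamma_{\mathscr{C},\mathfrak{R}}\square\square\phi$ to its first-order content: for every $\mathscr{D}\supseteq\mathscr{C}$ and every $\mathscr{D}'$ with $\mathfrak{R}\mathscr{D}\mathscr{D}'$, and every $\mathscr{E}\supseteq\mathscr{D}'$ and every $\mathscr{E}'$ with $\mathfrak{R}\mathscr{E}\mathscr{E}'$, one must have $\Vdash^\gamma_{\mathscr{E}',\mathfrak{R}}\phi$. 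The strategy is to collapse this double step into a single $\mathfrak{R}$-step issuing from $\mathscr{D}$: condition $(d)$ of Definition \ref{modalRelation} applied to $\mathscr{D}'\subseteq\mathscr{E}$ converts $\mathfrak{R}\mathscr{E}\mathscr{E}'$ into $\mathfrak{R}\mathscr{D}'\mathscr{E}'$, after which transitivity of $\mathfrak{R}$ combined with $\mathfrak{R}\mathscr{D}\mathscr{D}'$ produces $\mathfrak{R}\mathscr{D}\mathscr{E}'$. Since $\mathscr{D}\supseteq\mathscr{C}$, the assumption $\Vdash^\gamma_{\mathscr{C},\mathfrak{R}}\square\phi$ then applies at the pair $(\mathscr{D},\mathscr{E}')$ and delivers the required $\Vdash^\gamma_{\mathscr{E}',\mathfrak{R}}\phi$.

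The only real obstacle is in (4), where the nested $\supseteq$-quantifier $\mathscr{E}\supseteq\mathscr{D}'$ buried inside the semantic clause for $\square$ threatens to break the transitivity chain, since $\mathfrak{R}\mathscr{E}\mathscr{E}'$ does not begin at $\mathscr{D}'$. Recognising that condition $(d)$ of Definition \ref{modalRelation} is precisely the tool that slides such an $\mathfrak{R}$-arrow down the subset order, so that it can be concatenated with $\mathfrak{R}\mathscr{D}\mathscr{D}'$ under transitivity, is the only nontrivial insight; once that move is isolated, both items fall out of straightforward unfolding of Definition \ref{EXTValidity}, with (T) coming essentially for free from reflexivity.
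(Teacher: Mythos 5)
Your proposal is correct and follows essentially the same route as the paper: (T) by instantiating the $\square$-clause at the reflexive pair, and (4) by using condition $(d)$ of Definition \ref{modalRelation} to pull the second $\mathfrak{R}$-arrow down the subset order and then chaining with transitivity. The only difference is presentational — the paper argues (4) by contradiction while you argue it directly, and you are somewhat more explicit about unfolding the meta-turnstile over $\supseteq$-extensions — but the key steps are identical.
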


\begin{proof}
    For (T) assume, without loss of generality, some $\mathscr{B}$ and $\mathfrak{R}$ s.t. $\Vdash^\gamma_{\mathscr{B}, \mathfrak{R}} \square \phi$. By reflexivity, we know that $\mathfrak{R}\mathscr{B}\mathscr{B}$ and so $\Vdash^\gamma_{\mathscr{B}, \mathfrak{R}} \phi$.

    For (4), we again, without loss of generality, assume some $\mathscr{B}$ and $\mathfrak{R}$ s.t. $\Vdash^\gamma_{\mathscr{B}, \mathfrak{R}} \square \phi$. For contradiction, we also assume $\nVdash^\gamma_{\mathscr{B}, \mathfrak{R}} \square \square\phi$. So, there are $\mathscr{C}\supseteq\mathscr{B}$ and $\mathscr{D}$ s.t. $\mathfrak{R}\mathscr{C}\mathscr{D}$ and $\nVdash^\gamma_{\mathscr{D}, \mathfrak{R}} \square \phi$. This means there have to be $\mathscr{E}\supseteq\mathscr{D}$ and $\mathscr{F}$ s.t. $\mathfrak{R}\mathscr{E}\mathscr{F}$ and $\nVdash^\gamma_{\mathscr{F}, \mathfrak{R}} \phi$. By Definition \ref{modalRelation} $(d)$, we know $\mathfrak{R} \mathscr{D}\mathscr{F}$ and, by transitivity, $\mathfrak{R}\mathscr{C}\mathscr{F}$ and so $\nVdash^\gamma_{\mathscr{B}, \mathfrak{R}} \square \phi$, which contradicts our assumption.
    \qed
\end{proof}

From Lemmas \ref{NDHilbert}, \ref{MinimalLogic}, \ref{DoubleNegation}, \ref{Kmodalaxioms}, and \ref{OtherModalaxioms}, we get the following:

\begin{theorem}[Completeness] For $\gamma = K, KT, K4$, or $S4$, if $\phi$ is a theorem of modal logic $\gamma$, then $\phi$ is valid in the base-extension semantics for $\gamma$.
\end{theorem}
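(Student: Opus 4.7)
The plan is to proceed by induction on the length of a Hilbert-style derivation of $\phi$ in the proof system for $\gamma$ (Definition \ref{HilbertSystem}). There are two kinds of base cases (classical axioms and modal axioms) and two kinds of inductive steps (the rules $MP$ and $NEC$). All of these have already been handled individually by the preceding lemmas, so the main task is to assemble them into a uniform induction.

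For the base cases involving the classical axioms 1--3, I would invoke Lemma \ref{NDHilbert}: every such instance is derivable, without open assumptions, in the natural deduction system of Definition \ref{def:classical-nd}. Running through this natural deduction derivation from the leaves to the root, each rule application is justified semantically by Lemma \ref{MinimalLogic} (for $(\to I)$, $(\to E)$, and the structural principles R, S, C) or Lemma \ref{DoubleNegation} (for $(\bot_c)$, restricted to atoms, boxed formulas, and $\bot$ as in the proof of that lemma). This yields $\Vdash^\gamma_{\mathscr{B},\mathfrak{R}} \phi$ for every base $\mathscr{B}$ and every $\gamma$-modal relation $\mathfrak{R}$. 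For the modal axiom $K$, I would invoke the last clause of Lemma \ref{Kmodalaxioms}, which gives $\square(\phi\to\psi) \Vdash^\gamma \square\phi \to \square\psi$; I then use ($\to$I) from Lemma \ref{MinimalLogic} to lift this to the full validity $\Vdash^\gamma \square(\phi\to\psi) \to (\square\phi \to \square\psi)$. Analogously, for the axioms present only in stronger logics I would invoke Lemma \ref{OtherModalaxioms}: the (T) clause whenever $\gamma \in \{KT, S4\}$ (so that $\mathfrak{R}$ is reflexive), and the (4) clause whenever $\gamma \in \{K4, S4\}$ (so that $\mathfrak{R}$ is transitive), each followed by ($\to$I) to obtain the implicational form of the axiom.

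For the inductive steps, the rule $MP$ is discharged by the first clause of Lemma \ref{Kmodalaxioms}: if, by the induction hypothesis, $\Vdash^\gamma_{\mathscr{B},\mathfrak{R}} \phi\to\psi$ and $\Vdash^\gamma_{\mathscr{B},\mathfrak{R}} \phi$ hold for every $\mathscr{B}$ and every $\gamma$-modal relation $\mathfrak{R}$, then $\Vdash^\gamma_{\mathscr{B},\mathfrak{R}} \psi$ holds in the same generality, and hence $\Vdash^\gamma \psi$. The rule $NEC$ is discharged by the second clause of Lemma \ref{Kmodalaxioms}: if the induction hypothesis gives $\Vdash^\gamma \phi$, then $\Vdash^\gamma \square\phi$ follows directly.

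The main obstacle is really one of bookkeeping rather than of mathematics: the preceding lemmas state some axiom schemes as semantic consequences $\Gamma \Vdash^\gamma \phi$ with nonempty $\Gamma$ (for instance, $\square(\phi\to\psi) \Vdash^\gamma \square\phi \to \square\psi$ and $\square\phi \Vdash^\gamma \square\square\phi$), whereas completeness requires the validity $\Vdash^\gamma \chi$ of the corresponding implicational axiom $\chi$. I would dispatch this uniformly by combining ($\to$I) from Lemma \ref{MinimalLogic} with monotonicity (Lemma \ref{ModalMonotonicity}) to convert $\Gamma \Vdash^\gamma \phi$ into $\Vdash^\gamma (\bigwedge \Gamma) \to \phi$, applied to each modal axiom. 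With this translation in place, the induction goes through straightforwardly, and the theorem is just the combined content of Lemmas \ref{NDHilbert}, \ref{MinimalLogic}, \ref{DoubleNegation}, \ref{Kmodalaxioms}, and \ref{OtherModalaxioms}.
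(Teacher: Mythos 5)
Your proposal is correct and matches the paper's own argument, which derives the theorem directly from the same five lemmas (\ref{NDHilbert}, \ref{MinimalLogic}, \ref{DoubleNegation}, \ref{Kmodalaxioms}, \ref{OtherModalaxioms}); you merely make explicit the induction on derivation length that the paper leaves implicit. The only cosmetic remark is that your detour through $\bigwedge\Gamma$ is unnecessary, since every modal axiom appears with a singleton antecedent and ($\to$I) with empty context already converts $\chi \Vdash^\gamma \psi$ into $\Vdash^\gamma \chi\to\psi$.
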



This establishes the step from (3. Theorem) to (1. Validity in B-eS) for our Theorem \ref{Completeness}. For the last part, we want to show the step from (1. Validity in B-eS) to (2. Validity in Kripke semantics); that is, being valid in a base-extension semantics implies being valid in the corresponding Kripke semantics. This will be our proof of soundness. For this, we follow the proof for classical base-extension semantics from \cite{Makinson2014}.

This soundness proof goes via the contrapositive. We show that if $\phi$ is not valid in the Kripke semantics for $\gamma$, then it also is not valid for the corresponding base-extension semantics. We do so by taking a frame $F = \langle W,R\rangle$ and a model $M = \langle F,V\rangle$ with a world $w$ s.t. $M,w\nvDash^\gamma \phi$ and constructing bases for each of the worlds in $M$ so that the base and the world agree on all validity judgements. Specifically, they will agree on $\phi$ and so $\phi$ will not be valid in base-extension semantics either.

The construction of such a model, and establishment of the required property, is complicated, with many steps whose role is not always easy to see. We give a brief sketch of the proof before moving to the actual proof.


As $\phi$ is not valid in Kripke semantics, we know there is a model and a world s.t. $M,w\nvDash^\gamma \phi$. We cannot simply construct a maximally-consistent base for each world in $M$, because $M$ can have worlds with the same valuation, but it is not possible to have two different maximally-consistent bases with the same base rules. So our first step is to construct a model $M'$ from $M$ where there are no two worlds in $M'$ that have the same valuation, but we still have $M',w\nvDash^\gamma \phi$. Next we use the methods of \cite{Makinson2014} to construct maximally-consistent bases (like $\mathscr{B}_w$) from the worlds in $M'$ (like $w$) so that all non-modal formulas will hold at $\mathscr{B}_w$ iff they are true at the world $w$, by choosing the appropriate base rules. We then define a relation $\mathfrak{R}$ on the set of bases, starting with $\mathfrak{R}\mathscr{B}_w\mathscr{B}_v$ iff $Rwv$, to deal with modal formulas. We then show that $\mathfrak{R}$ is the right type of relation; that is to say a $\gamma$-modal relation, and, finally,  that formulas hold at $\mathscr{B}_w$ iff they are true at $w$. Given that we started with $M,w\nvDash^\gamma \phi$, if follows that $\nVdash^\gamma_{\mathscr{B}_w, \mathfrak{R}} \phi$ and so $\phi$ is not valid in base-extension semantics.

Here is a list of the steps for reference: 

\begin{enumerate}
    \item Construct $M'$ from $M$ s.t. there are no two worlds with the same valuation.
    \item Construct maximally-consistent bases corresponding to the worlds in $M'$.
    \item Define a relation $\mathfrak{R}$ on the set of bases using $R'$ of $M'$
    \item Prove that $\mathfrak{R}$ is a $\gamma$-modal relation
    \item Prove that $\Vdash^\gamma_{\mathscr{B}_w, \mathfrak{R}} \psi$ iff $M,w\vDash^\gamma \psi$, for all $\psi$
    \item Since $M,w\nvDash^\gamma \phi$, it follows that $\nVdash^\gamma_{\mathscr{B}_w, \mathfrak{R}} \phi$. So, $\phi$ is not valid in the base-extension semantics for $\gamma$.
\end{enumerate}

Given this blueprint we now move to the actual proof.

\begin{theorem}[Soundness]
\label{Soundness}
For $\gamma = K, KT, K4$, or $S4$, if $\phi$ is valid in modal base-extension semantics for $\gamma$, it is also valid in the Kripke semantics for $\gamma$.
\end{theorem}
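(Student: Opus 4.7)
The plan is to prove the contrapositive via the six-step blueprint above: given a $\gamma$-model $M = \langle W, R, V\rangle$ and a world $w_0$ with $M, w_0 \nvDash^\gamma \phi$, I will construct a base $\mathscr{B}_{w_0}$ and a $\gamma$-modal relation $\mathfrak{R}$ such that $\nVdash^\gamma_{\mathscr{B}_{w_0}, \mathfrak{R}} \phi$. For step 1, I would enrich the valuation by introducing a fresh atom $q_w$ for each world $w$ with $V'(q_w) = \{w\}$, leaving original atoms unchanged; since $\phi$ does not mention these new atoms, $M', w_0 \nvDash^\gamma \phi$ still holds, and now any two worlds are separated by some atom.

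For step 2, following the Lindenbaum-style construction underlying Lemma~\ref{MaxCon}, I would set
\[\mathscr{B}_w \;=\; \{\, p_1, \ldots, p_n \Rightarrow q \mid \text{if } M', w \vDash^\gamma p_i \text{ for all } i, \text{ then } M', w \vDash^\gamma q \,\}.\]
A direct verification gives $\overline{\mathscr{B}_w} = \{\, p : M', w \vDash^\gamma p \,\}$, and maximal consistency follows because any omitted rule has all premises true at $w$ but conclusion false, so adding it would let us derive every atom. Step 1 then makes $w \mapsto \mathscr{B}_w$ injective, and by maximality $\mathscr{B}_u \subseteq \mathscr{B}_v$ forces $u = v$. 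For step 3, the natural definition is $\mathfrak{R}\mathscr{B}\mathscr{C}$ iff either both are inconsistent, or $\mathscr{C} = \mathscr{B}_v$ and $\mathscr{B} \subseteq \mathscr{B}_u$ for some $u, v$ with $R'uv$. The downward-closure in the first coordinate is engineered to validate clause~(d) of Definition~\ref{modalRelation}, the inconsistent fallback handles (a) and (b), and (c) is obtained by picking a $\mathscr{D} \supset \mathscr{B}$ still contained in $\mathscr{B}_u$.

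The step I expect to be the main obstacle is step 4: ensuring that $\mathfrak{R}$ satisfies the frame condition for $\gamma$ on \emph{all} bases, not just on those of the form $\mathscr{B}_w$. Reflexivity of $R'$ yields $\mathfrak{R}\mathscr{B}_w\mathscr{B}_w$ via $R'ww$, but a generic consistent $\mathscr{B}$ that is not contained in any $\mathscr{B}_u$ needs attention, so the definition of $\mathfrak{R}$ may have to be supplemented (for example with explicit diagonal pairs in the $KT$ and $S4$ cases) while one re-checks that (c) and (d) survive the addition. Transitivity for $K4$ and $S4$ is easier: if $\mathfrak{R}\mathscr{B}\mathscr{B}_v$ and $\mathfrak{R}\mathscr{B}_v\mathscr{B}_{v'}$, then $\mathscr{B}_v \subseteq \mathscr{B}_u$ and maximality of $\mathscr{B}_v$ give $u = v$, so $R'vv'$ holds, and transitivity of $R'$ supplies a witness for $\mathfrak{R}\mathscr{B}\mathscr{B}_{v'}$.

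Finally, step 5 is the induction on $\psi$ establishing $\Vdash^\gamma_{\mathscr{B}_w, \mathfrak{R}} \psi$ iff $M', w \vDash^\gamma \psi$. Atoms and $\bot$ are immediate from the construction of $\mathscr{B}_w$; the $\to$ case uses Lemma~\ref{ModalBehaviour} together with maximal consistency; for $\square\psi$, the only consistent $\mathscr{C} \supseteq \mathscr{B}_w$ is $\mathscr{B}_w$ itself (by maximality), and Lemma~\ref{EFQ} trivialises the inconsistent extensions, so the only relevant successors $\mathscr{C}'$ with $\mathfrak{R}\mathscr{B}_w\mathscr{C}'$ are of the form $\mathscr{B}_v$ with $R'wv$ (or are inconsistent), whence the equivalence matches the Kripke clause for $\square$. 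Instantiating at $w_0$ yields $\nVdash^\gamma_{\mathscr{B}_{w_0}, \mathfrak{R}} \phi$, completing step 6.
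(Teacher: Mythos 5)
Your overall strategy is the same as the paper's: argue the contrapositive, separate worlds with fresh atoms, build a maximally-consistent base $\mathscr{B}_w$ for each world, induce a relation $\mathfrak{R}$ from $R'$ closed downwards in the first coordinate and with inconsistent bases quarantined among themselves, and prove a truth lemma. Several of your details are in fact tidier than the paper's: your direct definition of $\mathscr{B}_w$ as the set of all rules true at $w$ (rather than the paper's two-stage $\mathscr{A}_w$ plus an appeal to Lemma~\ref{MaxCon}), and your definition of $\mathfrak{R}$ with downward closure and the target $\mathscr{B}_v$ built in, which makes condition $(d)$ of Definition~\ref{modalRelation} and transitivity for $K4$ essentially immediate (your observation that $\mathscr{B}_v \subseteq \mathscr{B}_u$ forces $u=v$ by maximality is exactly the right lever). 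For $\gamma = K$ and $K4$ your argument goes through.

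The genuine gap is the reflexive cases $KT$ and $S4$, which you correctly identify as the obstacle but do not close, and closing it is where most of the work in the paper's proof lives. Adding ``explicit diagonal pairs'' $\mathfrak{R}\mathscr{B}\mathscr{B}$ for all $\mathscr{B}$ is forced by reflexivity, but it immediately threatens condition $(c)$: for a consistent, non-maximal $\mathscr{B}$, the pair $\mathfrak{R}\mathscr{B}\mathscr{B}$ requires some \emph{proper superset} $\mathscr{D}\supset\mathscr{B}$ with $\mathfrak{R}\mathscr{D}\mathscr{B}$, and reflexivity only supplies $\mathfrak{R}\mathscr{D}\mathscr{D}$. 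The paper repairs this by adding two further clauses (its (5b) and (5c)): every maximally-consistent base that is not some $\mathscr{B}_w$ is related to all its subsets, and $\mathscr{B}_w$ is related to every base of which it is the \emph{unique} maximally-consistent extension; it then needs a separate argument (using the disagreement of distinct $\mathscr{B}_w$'s on the separating atoms) that every consistent base falls under one of these clauses. Your proposal omits this, and, more importantly, the repair breaks your step 5 as stated: once (5c)-style pairs are present, the successors of $\mathscr{B}_w$ are no longer only the $\mathscr{B}_v$ with $R'wv$ and the inconsistent bases --- they include proper subsets of the $\mathscr{B}_v$. The $\square$ case of the truth lemma then needs the additional fact that a base with a unique maximally-consistent extension agrees with that extension on all formulas (the paper's Lemma~\ref{BelowMaxCon}), which your induction does not invoke. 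So the proposal is complete for $K$ and $K4$ but leaves the $KT$ and $S4$ halves of the theorem unproved.
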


\begin{proof}


As we want to show the contrapositive, we know there is a model $M$ and a world $W$ s.t. $M,w\nVdash^\gamma \phi$. We begin the proof with step 1 from above.

Unlike in \cite{Makinson2014}, we can have different worlds with the same valuation and so we need bases that disagree only on propositional formulas that are not relevant to $\phi$. We achieve that by choosing a different propositional sentence $q$ for each world that does not appear in $\phi$. Note that this is possible because there are infinitely many propositional sentences that can be used and only finitely many of them can be part of $\phi$. So, for every world $v \in W$, let $q_v$ be some propositional sentence s.t. it does not appear in $\phi$ and, for all worlds $u$, if $u \neq v$, then $q_u \neq q_v$. We now construct a model $M'$ by changing the valuations concerning the $q$s to get a model at which no two worlds have the same valuation. We define $M'=\langle F,V'\rangle$ s.t. for all $v\in W$, $V'(q_v) = W\setminus\{v\}$ and $V'(p)=V(p)$ for all other propositional atoms $p$. Since the $q$s do not appear in $\phi$, we clearly have $M',w\nvDash^\gamma \phi$.

For step 2, we now construct the bases and modal relation that correspond to the model $M'$. We first construct maximally-consistent bases that correspond to the classical judgements of the worlds in $W$ and then add the relation onto that (in step 3.). 

We start by defining a base $\mathscr{A}_w$ for every $w\in W$ as follows: 
\[
\begin{split}
\mathscr{A}_w := \; & \{\Rightarrow p : M',w\vDash p\} \cup \\
                 & \{\Rightarrow q_v : v \in W \text{ and } v\neq w\} \cup \\
                 & \{p\Rightarrow q_w, q_w\Rightarrow p : M',w\nvDash p\} \\
\end{split}
\]
 
We now want $\mathscr{B}_w$ to be a maximally-consistent extension of $\mathscr{A_w}$ for which $\nVdash_{\mathscr{C}} q_w$. By Lemma \ref{MaxCon}, we know that there is a maximally-consistent base $\mathscr{C}$ s.t. $\mathscr{C}\supseteq\mathscr{A}_w$ and $\nVdash_{\mathscr{C}} q_w$, so we simply take that $\mathscr{C}$ as our $\mathscr{B}_w$.

For step 3, we now need to define a relation $\mathfrak{R}$ s.t., if $Rwv$, then $\mathfrak{R}\mathscr{B}_w\mathscr{B}_v$. We require $\mathfrak{R}$ to be a $\gamma$-modal relation, however. To this end we consider the relation such that $\mathfrak{R}\mathscr{B}\mathscr{C}$ only by the following:



\begin{enumerate}[label=(\arabic{enumi})]
    \item if $Rwv$, then $\mathfrak{R}\mathscr{B}_w\mathscr{B}_v$
    \item if $\mathscr{B}$ and $\mathscr{C}$ are inconsistent, then $\mathfrak{R}\mathscr{B}\mathscr{C}$
    \item if there is a consistent base $\mathscr{D}$ s.t. $\mathscr{B}\subseteq\mathscr{D}$ and $\mathfrak{R}\mathscr{D}\mathscr{C}$, then $\mathfrak{R}\mathscr{B}\mathscr{C}$

    \item if $\gamma$ includes transitivity, then if $\mathfrak{R}\mathscr{B}\mathscr{C}$ and $\mathfrak{R}\mathscr{C}\mathscr{D}$, $\mathfrak{R}\mathscr{B}\mathscr{D}$
    
    \item if $\gamma$ includes reflexivity, then (5a) for all bases $\mathscr{B}$, $\mathfrak{R}\mathscr{B}\mathscr{B}$, (5b)  
    if $\mathscr{B}$ is maximally-consistent, $\mathscr{B}\supseteq\mathscr{C}$ and there is no $w\in W'$ s.t. $\mathscr{B} = \mathscr{B}_w$, then $\mathfrak{R}\mathscr{B}\mathscr{C}$, 
    and (5c) if $\mathscr{B}_w$ is the only maximally-consistent base s.t. $\mathscr{B}_w\supseteq\mathscr{C}$, $\mathfrak{R}\mathscr{B}_w\mathscr{C}$. 
\end{enumerate}

In the case of reflexivity, it is not enough just to require $\mathfrak{R}$ to be reflexive. As we will see below, (5b) and (5c) are necessary to guarantee that the relation still obeys condition $(c)$ of Definition \ref{modalRelation}.

We have now constructed the bases and the relation we need. What is left to show is that what we have constructed them appropriately. For that we start with step 4 and show that $\mathfrak{R}$ is a $\gamma$-modal relation. For this we first have to show that $\mathfrak{R}$ is a modal relation.



So we have to show that the conditions $(a)$ to $(d)$ of Definition \ref{modalRelation} hold. Note that (2) simply connects all inconsistent bases with each other. We can do that because we do not discuss any frame conditions that exclude relations (like irreflexivity) here. \footnote{Nevertheless, these cases can in principle also be dealt with by a more careful construction of $\mathfrak{R}$.} This guarantees condition $(a)$. Since the only other steps that can connect a base to an inconsistent base are (4) and (5a), and that can only happen if the initial base is itself inconsistent, we also have $(b)$. Condition $(d)$ follows from (3) immediately. 


For $(c)$, we go step-by-step and start with the case in which $\gamma$ does not include transitivity or reflexivity. For any consistent bases $\mathscr{B}$ and $\mathscr{C}$, we can  only have $\mathfrak{R}\mathscr{B}\mathscr{C}$ because of (1) or (3). For (1) we know that $\mathscr{B}$ is maximally-consistent bases and so $(c)$ will trivially hold. For (3) simply note that the antecedent guarantees $(c)$.

Let $\gamma$ be transitive. We show that the relations added by (4) still fulfill condition $(c)$. Suppose $\mathfrak{R}\mathscr{B}\mathscr{C}$ and $\mathfrak{R}\mathscr{C}\mathscr{D}$ and that for these relations condition $(c)$ holds; that is, either $\mathscr{B}$ is maximally-consistent or there is a $\mathscr{E}$ s.t. $\mathfrak{R}\mathscr{E}\mathscr{C}$ and the same for $\mathfrak{R}\mathscr{C}\mathscr{D}$. We show that (4) also holds for $\mathfrak{R}\mathscr{B}\mathscr{D}$. Since $(c)$ holds for $\mathfrak{R}\mathscr{B}\mathscr{C}$, we know that either $\mathscr{B}$ is maximally-consistent, and we are done, or there is a $\mathscr{E}$ s.t. $\mathscr{E}\supset\mathscr{B}$ and $\mathfrak{R}\mathscr{E}\mathscr{C}$. From (4) and $\mathfrak{R}\mathscr{C}\mathscr{D}$, it follows that $\mathfrak{R}\mathscr{E}\mathscr{D}$.


As mentioned above, the case for reflexivity is not as straightforward. By (5a), we add $\mathfrak{R}\mathscr{B}\mathscr{B}$, for all $\mathscr{B}$. Either of (5b) and (5c) guarantees condition $(c)$, when they apply. So, we just need to show that for every consistent base one of them does. So, we need to show that, for all consistent bases $\mathscr{B}$, there is a maximally-consistent $\mathscr{C}\supseteq \mathscr{B}$ s.t. either $\mathscr{C}$ is the only such base and there is a $w\in W$ s.t. $\mathscr{C}=\mathscr{B_w}$ and we have $(c)$ by (5b) or there is no such $w$ and $(c)$ follows from (5c).

It is easy to see that a maximally-consistent $\mathscr{C}\supseteq\mathscr{B}$ exists for all consistent $\mathscr{B}$ and, if there is no $w\in W$ s.t. $\mathscr{C} = \mathscr{B}_w$, we are done. What remains to show is that, if for all maximally-consistent bases $\mathscr{C}\supseteq\mathscr{B}$ there exists a corresponding world $w$ s.t.  $\mathscr{C}=\mathscr{B_w}$, then there is only one such $\mathscr{C}$. Recall that for all $w$ and $v \in W$, $\mathscr{B}_w$ and $\mathscr{B}_v$ disagree on at least two propositional sentences: $q_w$ and $q_v$ and if $\mathscr{B}_w$ and $\mathscr{B}_v$ are supersets of $\mathscr{B}$, then at least one more distinct maximally-consistent bases that is a superset of $\mathscr{B}$: a maximally-consistent base on which neither $q_w$ nor $q_v$ hold. Although the base at which both hold need not still be possible in the presence of rules of the form $q_w,q_v \Rightarrow p$ for all $p$, we can not similarly exclude the case that neither holds. Let $\mathscr{D}$ be that base. We know that $\mathscr{D}\supseteq\mathscr{B}$ and that there is no $\mathscr{B}_u$ s.t. $\mathscr{B}_u = \mathscr{D}$ as $q_u$ holds at $\mathscr{D}$ (because it does at $\mathscr{B}_w$ and $\mathscr{B}_v$). This is a contradiction and we conclude that condition $(c)$ holds if $\gamma$ is reflexive and so we have shown that $\mathfrak{R}$ is a model relation.


The proof of that are is specifically a $\gamma$-modal relation is now straightforward. We have already shown that $\mathfrak{R}$ is a modal relation. So, it follows from (4), for reflexive $\gamma$, and (5a), for transitive $\gamma$. 

In step 5, we now show that the a formula is true at a world $w$ in $M'$ iff it holds at the corresponding base $\mathscr{B}_w$ given $\mathfrak{R}$. We prove this by induction on the complexity of $\phi$. The base case of $\phi = p$ follows directly from construction of the base $\mathscr{B}_w$. 

For $\phi = \bot$ note that $M',w\nvDash^\gamma \bot$ for all $w \in W'$ and that $\mathscr{B}_w$ is consistent. 

For $\phi = \psi\to\chi$, we know, by Lemma \ref{ModalBehaviour}, that $\Vdash^\gamma_{\mathscr{B}_w,\mathfrak{R}} \psi\to\chi$ iff $\nVdash^\gamma_{\mathscr{B}_w,\mathfrak{R}} \psi$ or $\Vdash^\gamma_{\mathscr{B}_w,\mathfrak{R}} \chi$. Similarly, $M',w\vDash^\gamma \psi\to\chi$ iff $M',w\nvDash^\gamma \psi$ or $M',w\vDash^\gamma \chi$  and we conclude by the induction hypothesis. 

We give the case in which $\phi = \square \psi$. Again, reflexivity will complicate this, so we start with the case in which $\gamma$ is not reflexive (but may be transitive). By Lemma \ref{ModalBehaviour}, 
$\Vdash^\gamma_{\mathscr{B}_w,\mathfrak{R}} \square\psi$ iff for all $\mathscr{C}$ s.t. $\mathfrak{R}\mathscr{B}_w\mathscr{C}$ $\Vdash^\gamma_{\mathscr{C},\mathfrak{R}} \psi$. By construction of $\mathfrak{R}$, we know that for any such $\mathscr{C}$ there is a $v\in W'$ s.t. $Rwv$ and $\mathscr{C} = \mathscr{B}_v$ and that for all such $v$ there is a corresponding $\mathscr{C}$. We conclude by noting that $M',w\vDash^\gamma \square \psi$ iff for all $v$ s.t. $Rwv$, $M',v\vDash^\gamma \psi$ and the induction hypothesis. 

Given a reflexive $\gamma$, it no longer holds that for all $\mathscr{C}$ s.t. $\mathfrak{R}\mathscr{B}_w\mathscr{C}$ there is a $v\in W'$, $\mathscr{C}=\mathscr{B}_v$ and $Rwv$, as they could also be those subsets of a $\mathscr{B}_v$ added by (5c). However, by induction hypothesis, we have $\Vdash^\gamma_{\mathscr{B}_v,\mathfrak{R}} \psi$ for that $\mathscr{B}_v$ and we can conclude by noting that, for all $\mathscr{C}$ and $\mathscr{B}_v$ s.t. $\mathscr{B}_v$ is the only maximally-consistent base that is a superset of $\mathscr{C}$, $\Vdash^\gamma_{\mathscr{C},\mathfrak{R}} \psi$ iff $\Vdash^\gamma_{\mathscr{B}_v,\mathfrak{R}} \psi$, by Lemma \ref{BelowMaxCon}.

We conclude with step 6. From $M',w\nvDash^\gamma \phi$ and step 5, it follows that $\nVdash^\gamma_{\mathscr{B}_w, \mathfrak{R}} \phi$. Since, by step 4, $\mathfrak{R}$ is a $\gamma$-modal relation, $\phi$ is not valid in the base-extension semantics for $\gamma$.

We conclude that (for arbitrary $\phi$), if $\phi$ is not valid in the Kripke semantics for $\gamma$, then it is not valid in the base-extension semantics for $\gamma$ either.
\qed

\end{proof}

With this, we have now shown the soundness and completeness of our base-extension semantics and established Theorem \ref{Completeness}. These are, however, not the only proof strategies we could have used. We conjecture that the inverse to our proofs also hold (i.e., Validity in B-eS implies Theorem and Validity in Kripke semantics implies Validity in B-eS). While these results are unnecessary for our argument, because, as mentioned above, the proofs above suffice to establish soundness and completeness, they would still be interesting. On the one hand, proofs of both soundness and completeness between our base-extension semantics and our proof system shows that our base-extension semantics is not dependent on Kripke semantics. On the other hand, soundness and completeness proofs between the base-extension semantics and Kripke semantics highlight the strong relation between the two semantic approaches and give us an opportunity to analyse their connection. This is, however, left as a topic for further research.

\section{Duality}
\label{sec:Duality}

So far we have taken $\square$ as our primary operator and $\lozenge$ defined as its dual. 
We can, however, also define $\lozenge$ independently of $\square$ in the usual manner.  
\begin{definition}
    For the remainder of this section let the validity of $\lozenge$ be defined in the following way:
    \begin{longtable}{lcl}
         $\Vdash^\gamma_{\mathscr{B},\mathfrak{R}} \lozenge \phi$ & iff & for all $\mathscr{C}\supseteq \mathscr{B}$ there is a $\mathscr{C'}$ s.t. $\mathfrak{R}\mathscr{C}\mathscr{C'}$, $\Vdash^\gamma_{\mathscr{C'},\mathfrak{R}} \phi$\\
    
    \end{longtable}
\end{definition}

With this definition, $\lozenge$ and $\square$ are dual in the usual way.

\begin{lemma}
    \label{lem:Duality}

    The following hold for all $\gamma$, $\gamma$-modal relations $\mathfrak{R}$, and bases $\mathscr{B}$: 
    
    \noindent 1. $\Vdash^\gamma_{\mathscr{B},\mathfrak{R}}\lozenge\phi$ iff $\Vdash^\gamma_{\mathscr{B},\mathfrak{R}}(\square(\phi\to\bot)\to\bot)$ \;
        2. $\Vdash^\gamma_{\mathscr{B},\mathfrak{R}}\square\phi$ iff $\Vdash^\gamma_{\mathscr{B},\mathfrak{R}}(\lozenge(\phi\to\bot)\to\bot)$.     
\end{lemma}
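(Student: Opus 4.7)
My plan is to prove both biconditionals by unfolding the semantic clauses of Definition~\ref{EXTValidity} and using the modal-relation conditions of Definition~\ref{modalRelation} together with Lemmas~\ref{ModalMonotonicity}, \ref{EFQ} and~\ref{ModalBehaviour}. The two items are symmetric, so I treat~(1) in detail and indicate the dual steps for~(2).

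For the $(\Rightarrow)$ direction of~(1), assume $\Vdash^\gamma_{\mathscr{B},\mathfrak{R}}\lozenge\phi$. Fix $\mathscr{C}\supseteq\mathscr{B}$ with $\Vdash^\gamma_{\mathscr{C},\mathfrak{R}}\square(\phi\to\bot)$; the target is $\Vdash^\gamma_{\mathscr{C},\mathfrak{R}}\bot$. Monotonicity lifts $\lozenge\phi$ to $\mathscr{C}$, producing $\mathscr{C}'$ with $\mathfrak{R}\mathscr{C}\mathscr{C}'$ and $\Vdash^\gamma_{\mathscr{C}',\mathfrak{R}}\phi$. Instantiating the $\square$-hypothesis at $\mathscr{D}=\mathscr{C}$ and $\mathscr{D}'=\mathscr{C}'$ gives $\Vdash^\gamma_{\mathscr{C}',\mathfrak{R}}\phi\to\bot$, and combining with $\Vdash^\gamma_{\mathscr{C}',\mathfrak{R}}\phi$ yields $\Vdash^\gamma_{\mathscr{C}',\mathfrak{R}}\bot$. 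The contrapositive of condition~(b) of Definition~\ref{modalRelation} then forces $\mathscr{C}$ to be inconsistent, i.e.\ $\Vdash^\gamma_{\mathscr{C},\mathfrak{R}}\bot$.

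For the $(\Leftarrow)$ direction, fix $\mathscr{C}\supseteq\mathscr{B}$; I must exhibit $\mathscr{C}'$ with $\mathfrak{R}\mathscr{C}\mathscr{C}'$ and $\Vdash^\gamma_{\mathscr{C}',\mathfrak{R}}\phi$. I split on consistency. If $\mathscr{C}$ is inconsistent, condition~(a) delivers an inconsistent $\mathfrak{R}$-successor and Lemma~\ref{EFQ} supplies $\phi$ there. Otherwise $\nVdash^\gamma_{\mathscr{C},\mathfrak{R}}\bot$, so the hypothesis forces $\nVdash^\gamma_{\mathscr{C},\mathfrak{R}}\square(\phi\to\bot)$, which unpacks to some $\mathscr{D}\supseteq\mathscr{C}$, $\mathscr{D}'$ with $\mathfrak{R}\mathscr{D}\mathscr{D}'$, and $\mathscr{E}\supseteq\mathscr{D}'$ satisfying $\Vdash^\gamma_{\mathscr{E},\mathfrak{R}}\phi$ and $\nVdash^\gamma_{\mathscr{E},\mathfrak{R}}\bot$. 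Condition~(d) gives $\mathfrak{R}\mathscr{C}\mathscr{D}'$ at once. The main obstacle is to promote the truth of $\phi$ from the extension $\mathscr{E}$ down to an $\mathfrak{R}$-successor of $\mathscr{C}$; my plan is to iterate conditions~(c) and~(d) together with Lemma~\ref{MaxCon} to pass to a maximally-consistent extension $\mathscr{E}^*\supseteq\mathscr{E}$, at which Lemma~\ref{ModalBehaviour} gives a Kripke-style reading of $\lozenge$, and then to relocate the witness back along the $\mathfrak{R}$-closure of $\mathscr{C}$.

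Part~(2) is proved dually. The easy direction uses $\lozenge(\phi\to\bot)$ to supply, at every extension of $\mathscr{B}$, an $\mathfrak{R}$-successor satisfying $\phi\to\bot$, which together with $\square\phi$ collapses into inconsistency via~(b). The converse proceeds by contrapositive: from $\nVdash^\gamma_{\mathscr{B},\mathfrak{R}}\square\phi$ extract a successor where $\phi$ fails, extend it (by Lemma~\ref{MaxCon}) to a maximally-consistent base at which Lemma~\ref{ModalBehaviour} makes $\phi\to\bot$ classically available, and thereby contradict $\Vdash^\gamma_{\mathscr{B},\mathfrak{R}}\lozenge(\phi\to\bot)\to\bot$. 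In both parts the crux is the same: handling the hidden extension quantifier inside $\phi\to\bot$ by routing through maximally-consistent bases, so that the clauses for $\square$ and $\lozenge$ reduce to their familiar Kripke form via Lemma~\ref{ModalBehaviour}.
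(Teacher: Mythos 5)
Your easy directions are fine: arguing that $\Vdash^\gamma_{\mathscr{B},\mathfrak{R}}\lozenge\phi$ together with $\Vdash^\gamma_{\mathscr{C},\mathfrak{R}}\square(\phi\to\bot)$ produces an inconsistent successor, and then invoking the contrapositive of condition $(b)$ of Definition \ref{modalRelation} to make $\mathscr{C}$ itself inconsistent, is a correct argument (and likewise for the easy half of your part 2). The genuine gap is in the two hard directions. After unpacking $\nVdash^\gamma_{\mathscr{C},\mathfrak{R}}\square(\phi\to\bot)$ you correctly obtain $\mathscr{D}\supseteq\mathscr{C}$, a successor $\mathscr{D}'$ (hence a successor of $\mathscr{C}$ by $(d)$), and an extension $\mathscr{E}\supseteq\mathscr{D}'$ at which $\phi$ holds consistently --- but the clause for $\lozenge$ demands that $\phi$ hold at an actual $\mathfrak{R}$-successor of $\mathscr{C}$, and nothing in Definition \ref{modalRelation} relates a base to \emph{supersets of its successors}: conditions $(c)$ and $(d)$ move only the source of the relation up and down the subset order, never the target. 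So ``relocating the witness back along the $\mathfrak{R}$-closure of $\mathscr{C}$'' is not an available move, and Lemma \ref{ModalBehaviour} cannot supply it: that lemma gives Kripke-style clauses for $\bot$, $\to$ and $\square$ at maximally-consistent bases and says nothing about $\lozenge$, and the maximally-consistent extension $\mathscr{E}^*\supseteq\mathscr{E}$ you construct need not lie in the range of $\mathfrak{R}$ at all. The converse of your part 2 has the same defect: from one successor where $\phi$ fails you must manufacture a \emph{consistent} extension of $\mathscr{B}$ \emph{every} superset of which has a successor satisfying $\phi\to\bot$, and extending the failing successor to a maximally-consistent base does not place that base in the relation.

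The paper's proof avoids relocating witnesses altogether. It first replaces $\Vdash^\gamma_{\mathscr{D},\mathfrak{R}}\phi$ at the successor by $\Vdash^\gamma_{\mathscr{D},\mathfrak{R}}(\phi\to\bot)\to\bot$ using Lemma \ref{DoubleNegation}, and then reshapes the resulting nested conditional into the clause for $\square(\phi\to\bot)\to\bot$ through a chain of equivalences whose key steps are condition $(a)$ of Definition \ref{modalRelation} (which lets the consequent $\bot$ be transferred between the successor $\mathscr{D}$ and the source $\mathscr{C}$) and condition $(d)$ (which absorbs the extra universal quantifier over extensions of $\mathscr{C}$), finishing with quantifier manipulations. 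Some such device --- working with $(\phi\to\bot)\to\bot$ \emph{at the successor itself} rather than chasing a witness into its supersets --- is needed; as written, your hard directions do not go through.
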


    \begin{proof}
    Here is the layout for the proof of the first formula: 
\[
\begin{array}{l@{\quad}c@{\quad}l}
\Vdash^\gamma_{\mathscr{B},\mathfrak{R}} \lozenge\phi & \mbox{iff} & \mbox{(1) for all $\mathscr{C}\supseteq \mathscr{B}$, there is a $\mathscr{D}$ s.t. $\mathfrak{R}\mathscr{C}\mathscr{D}$, $\Vdash^\gamma_{\mathscr{D},\mathfrak{R}} \phi$} \\

& \mbox{iff} & \mbox{(2) for all $\mathscr{C}\supseteq \mathscr{B}$, there is a $\mathscr{D}$ s.t. $\mathfrak{R}\mathscr{C}\mathscr{D}$ and,}\\ 
& & \mbox{for all $\mathscr{F}\supseteq\mathscr{D}$ if $\Vdash^\gamma_{\mathscr{F},\mathfrak{R}} \phi\to\bot$, then $\Vdash^\gamma_{\mathscr{F},\mathfrak{R}} \bot$}\\

 & \mbox{iff} & \mbox{(3) for all $\mathscr{C}\supseteq \mathscr{B}$, there is a $\mathscr{D}$ s.t. $\mathfrak{R}\mathscr{C}\mathscr{D}$ and,}   \\ 
 & & \mbox{if $\Vdash^\gamma_{\mathscr{D},\mathfrak{R}} \phi\to\bot$, then $\Vdash^\gamma_{\mathscr{D},\mathfrak{R}} \bot$}\\
 
 & \mbox{iff} & \mbox{(4) for all $\mathscr{C}\supseteq \mathscr{B}$, there is a $\mathscr{D}$ s.t. $\mathfrak{R}\mathscr{C}\mathscr{D}$ and,}  \\
 & & \mbox{if $\Vdash^\gamma_{\mathscr{D},\mathfrak{R}} \phi\to\bot$, then $\Vdash^\gamma_{\mathscr{C},\mathfrak{R}} \bot$}\\
 
& \mbox{iff} & \mbox{(5) for all $\mathscr{C}\supseteq \mathscr{B}$ there are $\mathscr{E}\supseteq\mathscr{C}$ and $\mathscr{D}$ s.t. $\mathfrak{R}\mathscr{E}\mathscr{D}$ and,}\\
& & \mbox{if $\Vdash^\gamma_{\mathscr{D},\mathfrak{R}} \phi\to\bot$, then $\Vdash^\gamma_{\mathscr{C},\mathfrak{R}} \bot$}\\

& \mbox{iff} & \mbox{(6) for all $\mathscr{C}\supseteq \mathscr{B}$, if for all $\mathscr{E}\supseteq\mathscr{C}$ and $\mathscr{D}$ s.t. $\mathfrak{R}\mathscr{E}\mathscr{D}$,}\\
& & \mbox{if $\Vdash^\gamma_{\mathscr{D},\mathfrak{R}} \phi\to\bot$, then $\Vdash^\gamma_{\mathscr{C},\mathfrak{R}} \bot$}\\

\Vdash^\gamma_{\mathscr{B},\mathfrak{R}} \neg\square\neg\phi & \mbox{iff} & \mbox{(7) for all $\mathscr{C}\supseteq \mathscr{B}$, if $\Vdash^\gamma_{\mathscr{C},\mathfrak{R}} \square (\phi\to\bot)$, then $\Vdash^\gamma_{\mathscr{C},\mathfrak{R}} \bot$}\\

\end{array} 
\]
    
    The steps between (1) and (2) follow from double negation proved in Lemma \ref{DoubleNegation}. The interesting step in this proof is from (3) to (2): if $\Vdash^\gamma_{\mathscr{D},\mathfrak{R}} \phi\to\bot$, then $\Vdash^\gamma_{\mathscr{D},\mathfrak{R}} \bot$ implies that, for all $\mathscr{F}\supseteq\mathscr{D}$, if $\Vdash^\gamma_{\mathscr{F},\mathfrak{R}} \phi\to\bot$, then $\Vdash^\gamma_{\mathscr{F},\mathfrak{R}} \bot$ because for all $\mathscr{G}\supseteq\mathscr{F}$ s.t. $\Vdash^\gamma_{\mathscr{G},\mathfrak{R}} \phi\to\bot$, we also have $\mathscr{G}\supseteq\mathscr{D}$ and so $\Vdash^\gamma_{\mathscr{G},\mathfrak{R}} \bot$. The converse holds simply because $\mathscr{D}\supseteq\mathscr{D}$. The steps between (3) and (4) follow from $(a)$ of Definition \ref{modalRelation}. As in the step from (3) to (2), the step from (4) to (5) follows simply because $\mathscr{C}\supseteq\mathscr{C}$. The converse step follows from $(d)$ of Definition \ref{modalRelation}. Finally, the steps between (5) and (6) are just moving quantifiers in or outside the scope of the conditional and, similarly the steps between (6) and (7) hold by simple unpacking/packing of the validity conditions.
    
    The proof of the second formula follows the same pattern.
\qed

    \end{proof}

\section{Incompleteness for Euclidean Modal Logics}
\label{sec:Euclidean}

We have shown soundness and completeness for the modal logics $K$, $KT$, $K4$, and $S4$, but we have yet to discuss any modal logic that is euclidean. Because we have a base-extension semantics for $S4$, perhaps the next natural step would be to consider $S5$. 

However, with our specific current approach we will not be able to get to $S5$, as it is incomplete for any euclidean modal logic.


\begin{lemma}
    \label{lem:Euclidean}
    It is not the case that if $\gamma$ is euclidean, then (5) $\lozenge \phi\Vdash^\gamma \square\lozenge\phi$.
\end{lemma}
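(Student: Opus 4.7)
The plan is to exhibit a concrete counterexample: an atom $p$, a base $\mathscr{B}$, and a euclidean modal relation $\mathfrak{R}$ such that $\Vdash^\gamma_{\mathscr{B},\mathfrak{R}} \lozenge p$ but $\nVdash^\gamma_{\mathscr{B},\mathfrak{R}} \square\lozenge p$. The \emph{key conceptual point} is that the B-eS clause for $\lozenge\phi$ quantifies over every extension of the evaluation base. When we pass under $\square$ to an $\mathfrak{R}$-successor $\mathscr{D}$, condition $(d)$ of Definition \ref{modalRelation} only propagates $\mathfrak{R}$-edges downward in the subset order, so an extension $\mathscr{E}\supseteq\mathscr{D}$ may lose all access to $p$-witnesses. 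Crucially, $\mathscr{E}$ need not itself be an extension of $\mathscr{B}$ (since $\mathscr{D}$ is only $\mathfrak{R}$-related to $\mathscr{B}$, not above it), so the witnesses arranged for $\lozenge p$ at extensions of $\mathscr{B}$ do not automatically transfer to $\mathscr{E}$. Euclideanness, which validates (5) in Kripke semantics, is here too coarse: it constrains only pairs of $\mathfrak{R}$-successors and ignores the extension structure.

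The concrete construction is as follows. Fix a countable atom set including $p$ and a distinguished atom $c$. Let $\mathscr{B} = \{c\Rightarrow q \mid q \text{ is an atom}\}$, so that any extension of $\mathscr{B}$ which derives $c$ derives every atom and is therefore inconsistent. Let $\mathscr{D} = \{\Rightarrow p\}$ and $\mathscr{E} = \{\Rightarrow p, \Rightarrow c\}$, noting that $\mathscr{E}\supsetneq\mathscr{D}$ but $\mathscr{E}\not\supseteq\mathscr{B}$ (since $\mathscr{E}$ contains no rule of the form $c\Rightarrow q$). Let $S = \{\mathscr{X} \text{ consistent} : c\notin\overline{\mathscr{X}}\}$ and define $\mathfrak{R}\mathscr{X}\mathscr{Y}$ to hold exactly when either $\mathscr{X}\in S$ and $\mathscr{Y} = \mathscr{D}$, or the relation between $\mathscr{X}$ and $\mathscr{Y}$ is one of the trivial clauses between inconsistent bases required by $(a)$.

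The verification proceeds in three bundles. First, $\mathfrak{R}$ is a euclidean modal relation: $S$ is downward closed under the subset order, giving $(d)$; $(c)$ follows by applying Lemma \ref{MaxCon} to any $\mathscr{X}\in S$ to obtain a proper maximally-consistent extension that still avoids $c$; $(a)$ and $(b)$ are immediate from the clause structure; and euclideanness is trivial because the sole consistent target is $\mathscr{D}$, and $\mathscr{D}\in S$ so $\mathfrak{R}\mathscr{D}\mathscr{D}$ holds. Second, $\Vdash^\gamma_{\mathscr{B},\mathfrak{R}}\lozenge p$: every consistent extension of $\mathscr{B}$ fails to derive $c$ (else it would be inconsistent) and hence lies in $S$, so it relates to $\mathscr{D}$, where $p$ holds; inconsistent extensions satisfy $\lozenge p$ trivially by Lemma \ref{EFQ}. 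Third, $\nVdash^\gamma_{\mathscr{B},\mathfrak{R}}\square\lozenge p$: taking $\mathscr{C} = \mathscr{B}$ and using $\mathfrak{R}\mathscr{B}\mathscr{D}$, we have $\nVdash^\gamma_{\mathscr{D},\mathfrak{R}}\lozenge p$, because the extension $\mathscr{E}\supseteq\mathscr{D}$ is consistent yet derives $c$, so $\mathscr{E}\notin S$ and has no $\mathfrak{R}$-successor at all, let alone one witnessing $p$.

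The main obstacle is arranging $(c)$, $(d)$, and euclideanness simultaneously while still permitting some extension of a successor to fall outside the witnessing set. The atom $c$ device resolves this in one stroke: the predicate ``$c\notin\overline{\mathscr{X}}$'' is monotone in the direction required for downward closure, the rules of $\mathscr{B}$ force every consistent extension of $\mathscr{B}$ to satisfy it (securing $\lozenge p$ at $\mathscr{B}$), and the explicit rule $\Rightarrow c$ in $\mathscr{E}$ puts $\mathscr{E}$ outside $S$ without making $\mathscr{E}$ inconsistent or an extension of $\mathscr{B}$ (breaking $\lozenge p$ above $\mathscr{D}$).
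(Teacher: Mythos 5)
Your counterexample is correct, and it isolates exactly the same phenomenon the paper exploits: the universal quantification over base extensions in the $\lozenge$/$\square$ clauses lets an \emph{extension} of an $\mathfrak{R}$-successor escape the reach of the relation, and euclideanness (unlike conditions $(c)$ and $(d)$) says nothing about the subset order, so it cannot repair this. The constructions differ in an interesting way, though. The paper starts from a maximally-consistent $\mathscr{B}$ with a single successor $\mathscr{C}=\{\Rightarrow p\}$, lets euclidean closure force $\mathfrak{R}\mathscr{C}\mathscr{C}$, adds an edge $\mathfrak{R}\mathscr{C}\mathscr{D}$ to a $q$-base, and then must hand-place further edges from two maximally-consistent supersets of $\mathscr{C}$ to satisfy condition $(c)$ and euclideanness simultaneously; the failure of $\lozenge p$ at $\mathscr{C}$ is witnessed by a superset that reaches only the \emph{wrong} base. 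You instead define $\mathfrak{R}$ globally by a single monotone predicate ($\mathscr{X}\in S$ iff $\mathscr{X}$ is consistent and $c\notin\overline{\mathscr{X}}$, all of $S$ pointing at the one base $\mathscr{D}=\{\Rightarrow p\}$), which makes euclideanness and conditions $(a)$--$(d)$ essentially one-line checks (your verification of $(d)$ via downward closure of $S$ plus the inconsistent-pairs clause is at the same level of rigor as the paper's own treatment in Theorem \ref{Soundness}); and the failure of $\lozenge p$ at the successor $\mathscr{D}$ is witnessed by an extension $\mathscr{E}=\{\Rightarrow p,\Rightarrow c\}$ that has \emph{no} successors at all. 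The rule set $\mathscr{B}=\{c\Rightarrow q\mid q\text{ an atom}\}$ is a nice device for forcing every consistent extension of $\mathscr{B}$ to remain inside $S$, which is what secures $\Vdash^\gamma_{\mathscr{B},\mathfrak{R}}\lozenge p$ without making $\mathscr{B}$ maximally-consistent. Your version arguably makes the source of the incompleteness more transparent; the paper's version has the advantage of showing that the failure persists even when the offending extension retains successors.
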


\begin{proof}

We show this by giving some $\mathscr{B}$ and euclidean $\mathfrak{R}$ s.t. $\Vdash_{\mathscr{B},\mathfrak{R}}^\gamma \lozenge\phi$, but $\nVdash_{\mathscr{B},\mathfrak{R}}^\gamma \square\lozenge\phi$. For simplicity, let $\mathscr{B}$ be maximally-consistent, so we do not have to deal with supersets of $\mathscr{B}$ and let $\phi$ be some propositional sentence $p$. Note that we only require $\mathfrak{R}$ to be euclidean and it does not need to be reflexive or transitive. We start by giving a euclidean relation $\mathfrak{R}^*$ that gives us our result and then extend it to be a modal-relation at the end.

Take the base $\mathscr{C} = \{\Rightarrow p\}$ and let $\mathscr{C}$ be the only base s.t. $\mathfrak{R}^*\mathscr{B}\mathscr{C}$.  By Definition \ref{EXTValidity},   $\Vdash_{\mathscr{C},\mathfrak{R}^*}^\gamma p$. This suffices for $\Vdash_{\mathscr{B},\mathfrak{R}^*}^\gamma \lozenge p$. Since we require $\mathfrak{R}^*$ to be euclidean, we also require $\mathfrak{R}^*\mathscr{C}\mathscr{C}$. Now take any propositional letter $q \neq p$ and base $\mathscr{D} = \{\Rightarrow q\}$ and let $\mathfrak{R}^*\mathscr{C}\mathscr{D}$. Again by Definition \ref{EXTValidity}, we have $\nVdash_{\mathscr{D},\mathfrak{R}^*}^\gamma p$. Finally, take any two bases $\mathscr{E}$ and $\mathscr{F}$ that are both maximally-consistent supersets of $\mathscr{C}$ but are not supersets of each other and let $\mathfrak{R}^*\mathscr{E}\mathscr{D}$ and $\mathfrak{R}^*\mathscr{F}\mathscr{C}$; that is, $\mathscr{E}$ inherits the relation to $\mathscr{D}$ and $\mathscr{F}$ the one to $\mathscr{C}$ from $\mathscr{C}$. Now since we do not have $\mathfrak{R}^*\mathscr{E}\mathscr{C}$, $\mathscr{C}$ does not have access to a base at which $\phi$ holds and so we get $\nVdash_{\mathscr{C},\mathfrak{R}^*}^\gamma \lozenge p$ and finally $\nVdash_{\mathscr{B},\mathfrak{R}^*}^\gamma \square\lozenge p$. 

As already mentioned $\mathfrak{R}^*$ is not a modal-relation. Consider the relation $\mathfrak{R}$ given only by the following:

\begin{enumerate}[label=(\arabic{enumi})]

\item if $\mathfrak{R}^*\mathscr{B}\mathscr{C}$, then $\mathfrak{R}\mathscr{B}\mathscr{C}$

\item if $\mathscr{B}$ and $\mathscr{C}$ are inconsistent, then $\mathfrak{R}\mathscr{B}\mathscr{C}$

\item if there is a consistent base $\mathscr{D}$ s.t. $\mathscr{B}\subseteq\mathscr{D}$ and $\mathfrak{R}\mathscr{D}\mathscr{C}$, then $\mathfrak{R}\mathscr{B}\mathscr{C}$.

\end{enumerate}

As in the proof of Theorem \ref{Soundness}, (2) guarantees (a) and (3) guarantees (d) of Definition \ref{modalRelation}. Condition (c) holds because the only relations given by (1) that do not originate from a maximally-consistent bases are $\mathfrak{R}\mathscr{C}\mathscr{C}$ and $\mathfrak{R}\mathscr{C}\mathscr{D}$, but, by construction of $\mathfrak{R}^*$ and (1), we have $\mathfrak{R}\mathscr{E}\mathscr{D}$ and $\mathfrak{R}\mathscr{F}\mathscr{C}$ and $\mathscr{E}$ and $\mathscr{F}$ are supersets of $\mathscr{C}$. For (b) it suffices that none of the steps allow a relation from a consistent to an inconsistent base.

We have $\Vdash_{\mathscr{B},\mathfrak{R}}^\gamma \lozenge\phi$, because, as before, $\mathfrak{R}\mathscr{B}\mathscr{C}$. We also have $\nVdash_{\mathscr{C},\mathfrak{R}}^\gamma \lozenge p$ and, unchanged by the construction of $\mathfrak{R}$, $\mathscr{C}$ is the only base s.t. $\mathfrak{R}\mathscr{F}\mathscr{C}$. So, $\nVdash_{\mathscr{C},\mathfrak{R}}^\gamma \lozenge p$ and, finally, $\nVdash_{\mathscr{B},\mathfrak{R}}^\gamma \square\lozenge p$
\qed
\end{proof}

This result shows that our conditions for a modal relation of Definition \ref{modalRelation} do not sufficiently preserve the structure of the relation when going from bases to their sub- or super-sets. It is left for future work to try to analyse and adapt conditions (c) and (d), specifically. We believe it is possible that, with some fine tuning, a definition of modal relation can be found that results in sound and complete semantics for all the modal logics discussed in this paper.


\section{Conclusion}
\label{sec:Conclusion}

We have developed base-extension semantics for the classical propositional modal systems $K$, $KT$, $K4$, and $S4$. We have established appropriate soundness and completeness theorems. We 
 have shown duality between $\square$ and a natural definition of $\lozenge$. We have also shown that our approach, at least as it is represented here, does not yield a complete semantics for euclidean modal logics. 


Future work might include the following: proof-theoretic semantics for euclidean modal logics (especially $S5$) and, based on that, proof-theoretic semantics for applied modal logics, such as DEL \cite{Baltag1998}, belief revision \cite{Alchourron1985}, and temporal systems \cite{Goldblatt1992}; proof-theoretic semantics for modal logics in terms of natural deduction systems (for example, consider a comparison between our treatment of $S4$ and Sandqvist's treatment of intuitionistic propositional logic in \cite{Sandqvist2015}, given that there are translations between these two logics (see \cite{gore2019})); 
proof-theoretic semantics for intuitionistic modal propositional logics; 
 and correspondence results, along the lines of Sahlqvist's theorem \cite{Sahlqvist1975}.

\nocite{*}

	\bibliographystyle{plain}
	\bibliography{modalPTS}

\appendix
\section{Appendix: Completeness wrt a classical Hilbert proof system}
\label{Appendix}
We present a different approach to proving that every classical tautology is valid in our base-extension semantics.

In Lemma \ref{Kmodalaxioms}, we have already shown that (MP) holds on our bases so what is left are the axioms (1)-(3) from Definition \ref{HilbertSystem}. For this we require 2 more lemmas. First we show that the law of excluded middle holds at maximally-consistent bases.

\begin{lemma}
    \label{lem:MaxConOr}
For every formula $\phi$, maximally-consistent base $\mathscr{B}$, and $\gamma$-modal relation $\mathfrak{R}$, either $\Vdash^\gamma_{\mathscr{B},\mathfrak{R}} \phi$ or $\Vdash^\gamma_{\mathscr{B},\mathfrak{R}} \phi\to\bot$.
\end{lemma}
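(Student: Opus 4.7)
The plan is to derive this directly from Lemma \ref{ModalBehaviour}, without any induction on the structure of $\phi$. The key observation is that Lemma \ref{ModalBehaviour} already pins down the behaviour of $\to$ and $\bot$ at maximally-consistent bases, so the negation-as-implication $\phi\to\bot$ collapses to the external assertion $\nVdash^\gamma_{\mathscr{B},\mathfrak{R}}\phi$, after which the statement is settled by classical reasoning in the metatheory.

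Concretely, I would first fix $\mathscr{B}$ maximally-consistent and $\mathfrak{R}$ a $\gamma$-modal relation. By the second clause of Lemma \ref{ModalBehaviour},
\[
\Vdash^\gamma_{\mathscr{B},\mathfrak{R}} \phi\to\bot \quad\text{iff}\quad \nVdash^\gamma_{\mathscr{B},\mathfrak{R}} \phi \;\text{ or }\; \Vdash^\gamma_{\mathscr{B},\mathfrak{R}} \bot.
\]
But the first clause of the same lemma gives $\nVdash^\gamma_{\mathscr{B},\mathfrak{R}}\bot$, so the right-hand disjunct is false, leaving
\[
\Vdash^\gamma_{\mathscr{B},\mathfrak{R}} \phi\to\bot \quad\text{iff}\quad \nVdash^\gamma_{\mathscr{B},\mathfrak{R}} \phi.
\]

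The conclusion is then immediate: at the level of the metatheory we have either $\Vdash^\gamma_{\mathscr{B},\mathfrak{R}}\phi$ or $\nVdash^\gamma_{\mathscr{B},\mathfrak{R}}\phi$, and in the latter case the equivalence just established yields $\Vdash^\gamma_{\mathscr{B},\mathfrak{R}}\phi\to\bot$. In particular, no induction on $\phi$ is needed, because Lemma \ref{ModalBehaviour} has already done the structural work for $\to$ and $\bot$, and $\square$ plays no explicit role in the statement. There is no genuine obstacle here; the only subtlety worth flagging is that the result relies essentially on maximal consistency (so that Lemma \ref{ModalBehaviour} applies and $\bot$ can be excluded), and would fail at arbitrary bases, where bivalence in general does not hold.
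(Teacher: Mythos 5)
Your proposal is correct and follows essentially the same route as the paper's own proof: both apply the $\to$ clause of Lemma \ref{ModalBehaviour} to reduce $\Vdash^\gamma_{\mathscr{B},\mathfrak{R}}\phi\to\bot$ to the metatheoretic condition $\nVdash^\gamma_{\mathscr{B},\mathfrak{R}}\phi$ at a maximally-consistent base and then conclude by excluded middle in the metatheory. Your explicit use of the first clause to discharge the $\Vdash^\gamma_{\mathscr{B},\mathfrak{R}}\bot$ disjunct is a minor presentational difference only.
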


\begin{proof}
    We show that if $\nVdash^\gamma_{\mathscr{B},\mathfrak{R}} \phi$, then $\Vdash^\gamma_{\mathscr{B},\mathfrak{R}} \phi\to\bot$. 
    
    By Lemma \ref{ModalBehaviour}, $\Vdash^\gamma_{\mathscr{B},\mathfrak{R}} \phi\to\bot$ iff if $\Vdash^\gamma_{\mathscr{B},\mathfrak{R}} \phi$, then $\Vdash^\gamma_{\mathscr{B},\mathfrak{R}} \bot$. So, since we assumed $\nVdash^\gamma_{\mathscr{B},\mathfrak{R}} \phi$, the antecedent is false and so $\Vdash^\gamma_{\mathscr{B},\mathfrak{R}} \phi\to\bot$.
    \qed
\end{proof}

The second lemma we require is a generalization of Lemma \ref{MaxCon} from classical to modal base-extension semantics.

\begin{lemma}
\label{lem: ModalMaxCon}
For every formula $\phi$, if there is a base $\mathscr{B}$ s.t. $\nVdash^\gamma_{\mathscr{B},\mathfrak{R}} \phi$, then there is a maximally-consistent base $\mathscr{B}^*\supseteq\mathscr{B}$ with $\nVdash^\gamma_{\mathscr{B}^*, \mathfrak{R}} \phi$.
\end{lemma}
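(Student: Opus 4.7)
The plan is to proceed by induction on the structure of $\phi$, generalizing the Lindenbaum-style construction from Lemma~\ref{MaxCon}. For $\phi = p$ and $\phi = \bot$, the construction of Lemma~\ref{MaxCon} transfers verbatim, since atomic validity and validity of $\bot$ at a base depend only on the closure $\overline{\mathscr{B}}$ and do not interact with the modal relation $\mathfrak{R}$; the same maximally-consistent $\mathscr{B}^*$ produced there suffices. For $\phi = \psi \to \chi$, unpacking $\nVdash^\gamma_{\mathscr{B},\mathfrak{R}} \psi\to\chi$ yields $\mathscr{C}\supseteq\mathscr{B}$ with $\Vdash^\gamma_{\mathscr{C},\mathfrak{R}}\psi$ and $\nVdash^\gamma_{\mathscr{C},\mathfrak{R}}\chi$; applying the induction hypothesis to $\chi$ at $\mathscr{C}$ produces a maximally-consistent $\mathscr{B}^*\supseteq\mathscr{C}$ refuting $\chi$, and propagating $\psi$ upwards using Lemma~\ref{ModalMonotonicity} gives $\nVdash^\gamma_{\mathscr{B}^*,\mathfrak{R}}\psi\to\chi$.

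The essentially new case is $\phi = \square\psi$. From $\nVdash^\gamma_{\mathscr{B},\mathfrak{R}} \square\psi$ extract witnesses $\mathscr{C}\supseteq\mathscr{B}$ and $\mathscr{D}$ with $\mathfrak{R}\mathscr{C}\mathscr{D}$ and $\nVdash^\gamma_{\mathscr{D},\mathfrak{R}}\psi$; note that $\mathscr{D}$ must be consistent by Lemma~\ref{EFQ}, and therefore by condition~(b) of Definition~\ref{modalRelation} any base relating to $\mathscr{D}$ is consistent too. I would build a maximally-consistent extension $\mathscr{B}^*\supseteq\mathscr{C}$ that retains the witness relation to $\mathscr{D}$, so that by Lemma~\ref{ModalBehaviour} the pair $(\mathscr{B}^*,\mathscr{D})$ witnesses $\nVdash^\gamma_{\mathscr{B}^*,\mathfrak{R}} \square\psi$. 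Concretely, enumerate all base rules $\sigma_1,\sigma_2,\ldots$ and, starting from $\mathscr{B}_0 = \mathscr{C}$, at stage $i+1$ adjoin $\sigma_{i+1}$ only when the result is consistent and still related to $\mathscr{D}$; in addition, interleave with condition~(c) of Definition~\ref{modalRelation}: whenever the current $\mathscr{B}_i$ is not maximally-consistent, splice in a proper consistent extension that still relates to $\mathscr{D}$, as provided by~(c). The union $\mathscr{B}^*$ of this chain will be consistent (since derivability of $\bot$ is finitary), related to $\mathscr{D}$, and maximally-consistent by the saturation enforced through the interleaving.

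The principal obstacle is precisely orchestrating this interleaving so that $\mathscr{B}^*$ is simultaneously maximally-consistent and still satisfies $\mathfrak{R}\mathscr{B}^*\mathscr{D}$. Condition~(c) guarantees only a single-step extendability, so one must argue that the chain is cofinal among consistent extensions of $\mathscr{C}$ that keep the relation to $\mathscr{D}$: if $\mathscr{B}^*\cup\{\sigma\}$ were consistent for some $\sigma\notin\mathscr{B}^*$, then condition~(c) applied to $\mathfrak{R}\mathscr{B}^*\mathscr{D}$ would yield a strictly larger consistent extension of $\mathscr{B}^*$ still related to $\mathscr{D}$, contradicting the exhaustiveness of the enumeration-plus-(c) interleaving. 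Making this limit argument precise, while showing that neither the rule-adding steps nor the (c)-steps can conspire to lose the relation to $\mathscr{D}$ at the limit, is the main technical burden of the proof.
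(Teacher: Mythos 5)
Your proposal takes essentially the same route as the paper: the propositional cases are delegated to Lemma~\ref{MaxCon}, and for $\phi=\square\psi$ the witness pair $\mathscr{C}\supseteq\mathscr{B}$ and $\mathscr{D}$ with $\mathfrak{R}\mathscr{C}\mathscr{D}$ and $\nVdash^\gamma_{\mathscr{D},\mathfrak{R}}\psi$ is lifted to a maximally-consistent base by iterating condition $(c)$ of Definition~\ref{modalRelation} while preserving the relation to $\mathscr{D}$. The limit-stage difficulty you flag as the main technical burden is genuine, but the paper does not resolve it either: its proof simply asserts that ``by repeating this step as much as necessary we eventually reach a maximally-consistent $\mathscr{B}^*$ with $\mathfrak{R}\mathscr{B}^*\mathscr{D}$,'' leaving the preservation of the relation (and the attainment of maximality) at the union of the chain exactly as implicit as you leave it.
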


\begin{proof}
    The propositional cases are the same as in the proof of Lemma \ref{MaxCon}. So, there is only one new case to consider: $\phi=\square\psi$. Since $\nVdash^\gamma_{\mathscr{B},\mathfrak{R}} \square\psi$, there are $\mathscr{C}\supseteq\mathscr{B}$,$\mathscr{D}$ s.t. $\mathfrak{R}\mathscr{C}\mathscr{D}$, and $\nVdash^\gamma_{\mathscr{D},\mathfrak{R}} \psi$. If $\mathscr{C}$ is maximally-consistent, we are done since $\nVdash^\gamma_{\mathscr{C},\mathfrak{R}} \phi$. Otherwise by (c), we know there is a $\mathscr{E}\supset\mathscr{C}$ with $\mathfrak{R}\mathscr{E}\mathscr{D}$. By repeating this step as much as necessary we eventually reach a maximally-consistent $\mathscr{B}^*\supseteq\mathscr{B}$ with $\mathfrak{R}\mathscr{B}^*\mathscr{D}$ and so $\nVdash_{\mathscr{B}^*,\mathfrak{R}} \square\psi$
    \qed
\end{proof}

Given these lemmas and the (MP) rule we have shown in Lemma \ref{Kmodalaxioms}, we can now show the axioms we require.

\begin{lemma}
    \label{lem:modalaxioms}
    For any $\gamma$, the following hold:
    \begin{enumerate}[label=(\arabic{enumi})]
         
        \item $\phi\Vdash^\gamma \psi\to\phi$ 
        \item $\phi\to(\psi\to\chi)\Vdash^\gamma (\phi\to\psi)\to(\phi\to\chi)$
        \item $(\phi\to\bot)\to(\psi\to\bot)\Vdash^\gamma \psi\to\phi$.
    \end{enumerate}
\end{lemma}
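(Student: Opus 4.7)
The plan is to handle the two pure implication axioms (1) and (2) as straightforward unfoldings of the validity clauses in Definition \ref{EXTValidity}, using Lemma \ref{ModalMonotonicity} and the (MP) rule already established in Lemma \ref{Kmodalaxioms}, and then to treat the contraposition axiom (3) via maximally-consistent bases, using Lemma \ref{lem:MaxConOr} and Lemma \ref{lem: ModalMaxCon}.

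For (1), I would assume $\Vdash^\gamma_{\mathscr{B},\mathfrak{R}} \phi$ and consider any $\mathscr{C} \supseteq \mathscr{B}$ together with any $\mathscr{D} \supseteq \mathscr{C}$ such that $\Vdash^\gamma_{\mathscr{D},\mathfrak{R}} \psi$; then $\Vdash^\gamma_{\mathscr{D},\mathfrak{R}} \phi$ follows immediately from Lemma \ref{ModalMonotonicity}. For (2), I would assume $\Vdash^\gamma_{\mathscr{B},\mathfrak{R}} \phi\to(\psi\to\chi)$ and, unfolding $\to$ three times, take arbitrary $\mathscr{D} \supseteq \mathscr{C} \supseteq \mathscr{B}$ with $\Vdash^\gamma_{\mathscr{C},\mathfrak{R}} \phi\to\psi$ and $\Vdash^\gamma_{\mathscr{D},\mathfrak{R}} \phi$; monotonicity lifts both $\phi\to(\psi\to\chi)$ and $\phi\to\psi$ to $\mathscr{D}$, and two applications of (MP) yield $\Vdash^\gamma_{\mathscr{D},\mathfrak{R}} \chi$.

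Axiom (3) is the substantive case. I would assume $\Vdash^\gamma_{\mathscr{B},\mathfrak{R}} (\phi\to\bot)\to(\psi\to\bot)$ and fix an arbitrary $\mathscr{C} \supseteq \mathscr{B}$ with $\Vdash^\gamma_{\mathscr{C},\mathfrak{R}} \psi$, aiming to show $\Vdash^\gamma_{\mathscr{C},\mathfrak{R}} \phi$ by contradiction. If $\nVdash^\gamma_{\mathscr{C},\mathfrak{R}} \phi$, Lemma \ref{lem: ModalMaxCon} gives a maximally-consistent $\mathscr{C}^* \supseteq \mathscr{C}$ with $\nVdash^\gamma_{\mathscr{C}^*,\mathfrak{R}} \phi$; Lemma \ref{lem:MaxConOr} then supplies $\Vdash^\gamma_{\mathscr{C}^*,\mathfrak{R}} \phi\to\bot$. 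Lifting the global assumption and $\psi$ to $\mathscr{C}^*$ by Lemma \ref{ModalMonotonicity} and applying (MP) twice yields first $\Vdash^\gamma_{\mathscr{C}^*,\mathfrak{R}} \psi\to\bot$ and then $\Vdash^\gamma_{\mathscr{C}^*,\mathfrak{R}} \bot$, contradicting the consistency of $\mathscr{C}^*$.

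The main obstacle is axiom (3): base-extension semantics is intrinsically monotone, and the law of excluded middle is not directly available at an arbitrary base, so classicality has to be imported via the maximally-consistent closure of $\mathscr{C}$. The two ingredients needed for this manoeuvre are already isolated in the paper, namely Lemma \ref{lem: ModalMaxCon}, the modal analogue of Lemma \ref{MaxCon} that produces the required closure while preserving the refutation of $\phi$, and Lemma \ref{lem:MaxConOr}, the bivalence on maximally-consistent bases that delivers $\phi\to\bot$. Once these are in hand the rest is a purely Hilbert-style MP calculation at $\mathscr{C}^*$.
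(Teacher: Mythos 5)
Your proof is correct and follows essentially the same route as the paper: (1) and (2) by unfolding the validity clauses with Lemma \ref{ModalMonotonicity} and (MP), and (3) by passing to a maximally-consistent extension via Lemmas \ref{lem: ModalMaxCon} and \ref{lem:MaxConOr} and deriving $\Vdash^\gamma_{\mathscr{C}^*,\mathfrak{R}}\bot$. The only cosmetic difference is that you apply Lemma \ref{lem: ModalMaxCon} in the forward direction to produce a single witnessing $\mathscr{C}^*$, whereas the paper shows $\phi$ holds at every maximally-consistent superset and invokes that lemma's contrapositive; the two formulations are interchangeable.
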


\begin{proof}
    By Definition \ref{EXTValidity}, (1) holds if for all $\mathscr{B}$ and $\mathfrak{R}$ s.t. $\Vdash^\gamma_{\mathscr{B},\mathfrak{R}}\phi$, we also have $\Vdash^\gamma_{\mathscr{B},\mathfrak{R}}\psi\to\phi$. For all $\mathscr{C}\supseteq\mathscr{B}$ s.t. $\Vdash^\gamma_{\mathscr{C},\mathfrak{R}}\psi$, we also have $\Vdash^\gamma_{\mathscr{C},\mathfrak{R}}\phi$ by Lemma \ref{ModalMonotonicity} and so we can conclude $\Vdash^\gamma_{\mathscr{B},\mathfrak{R}}\psi\to\phi$. 

    Following the same strategy for (2), we take $\Vdash^\gamma_{\mathscr{B},\mathfrak{R}}\phi\to(\psi\to\chi)$ and show that $\Vdash^\gamma_{\mathscr{B},\mathfrak{R}}(\phi\to\psi)\to(\phi\to\chi)$. For that we need for all $\mathscr{C}\supseteq\mathscr{B}$ s.t. $\Vdash^\gamma_{\mathscr{C},\mathfrak{R}}\phi\to\psi$ to also have  $\Vdash^\gamma_{\mathscr{C},\mathfrak{R}}\phi\to\chi$. To show that take an abitrary $\mathscr{D}\supseteq\mathscr{C}$ s.t. $\Vdash^\gamma_{\mathscr{D},\mathfrak{R}}\phi$. By Lemma \ref{ModalMonotonicity}, we have $\Vdash^\gamma_{\mathscr{D},\mathfrak{R}}\phi\to(\psi\to\chi)$ and $\Vdash^\gamma_{\mathscr{D},\mathfrak{R}}\phi\to\psi$. A couple of straightforward applications of (MP) get us to $\Vdash^\gamma_{\mathscr{D},\mathfrak{R}}\chi$ and so we can conclude $\Vdash^\gamma_{\mathscr{C},\mathfrak{R}}\phi\to\chi$ and, finally, $\Vdash^\gamma_{\mathscr{B},\mathfrak{R}}(\phi\to\psi)\to(\phi\to\chi)$.

    For (3) we, again, take $\Vdash^\gamma_{\mathscr{B},\mathfrak{R}}(\phi\to\bot)\to(\psi\to\bot)$. To show $\Vdash^\gamma_{\mathscr{B},\mathfrak{R}}\psi\to\phi$, we show that for all $\mathscr{C}\supseteq\mathscr{B}$ s.t. $\Vdash^\gamma_{\mathscr{C},\mathfrak{R}}\psi$ we also have $\Vdash^\gamma_{\mathscr{C},\mathfrak{R}}\phi$. So assume $\nVdash^\gamma_{\mathscr{C},\mathfrak{R}} \phi$. By Lemma \ref{lem:MaxConOr}, we know that for all maximally-consistent $\mathscr{C}^*\supseteq\mathscr{C}$ either $\Vdash^\gamma_{\mathscr{C}^*,\mathfrak{R}} \phi$ or $\Vdash^\gamma_{\mathscr{C}^*,\mathfrak{R}} \phi\to\bot$. By Lemma \ref{ModalMonotonicity}, we also know that $\Vdash^\gamma_{\mathscr{C}^*,\mathfrak{R}} (\phi\to\bot)\to(\psi\to\bot)$ and $\Vdash^\gamma_{\mathscr{C}^*,\mathfrak{R}} \psi$. So, if $\Vdash^\gamma_{\mathscr{C}^*,\mathfrak{R}} \phi\to \bot$, then by (MP) $\Vdash^\gamma_{\mathscr{C}^*,\mathfrak{R}} \psi\to\bot$ and $\Vdash^\gamma_{\mathscr{C}^*,\mathfrak{R}} \bot$, which is a contradiction as $\mathscr{C}^*$ is consistent. So, since $\phi$ holds at all maximally-consistent superset bases of $\mathscr{C}$, it follows, from the contrapositive of Lemma \ref{lem: ModalMaxCon}, that $\Vdash^\gamma_{\mathscr{C},\mathfrak{R}} \phi$. 
    \qed
\end{proof}

The following follows immediately: 

\begin{lemma}
    For $\gamma = K, KT, K4$, or $S4$, if $\phi$ is a classical tautology, then $\phi$ is valid in the base-extension semantics for $\gamma$. 
\end{lemma}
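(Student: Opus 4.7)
The plan is to proceed by induction on the length of a Hilbert derivation. By Definition \ref{HilbertSystem}, the classical fragment of the Hilbert system consists of the three axiom schemas (1)--(3) together with the rule (MP), and a formula is a classical tautology precisely when it is derivable from no assumptions in this system. Hence it suffices to show that every formula so derivable is valid in the base-extension semantics for $\gamma$.

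For the base case the derivation has length one, so $\phi$ is an instance of one of (1), (2), or (3) from Definition \ref{HilbertSystem}. Recalling that in our language $\neg\chi$ abbreviates $\chi\to\bot$, these are exactly the three schemas treated in Lemma \ref{lem:modalaxioms}: $\phi\to(\psi\to\phi)$, $(\phi\to(\psi\to\chi))\to((\phi\to\psi)\to(\phi\to\chi))$, and $(\phi\to\bot)\to(\psi\to\bot)\Vdash^\gamma \psi\to\phi$ (the last being axiom (3) rewritten in terms of $\bot$). So the base case is already done.

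For the inductive step, $\phi$ is obtained by (MP) from $\psi$ and $\psi\to\phi$, each derivable by strictly shorter proofs. By the induction hypothesis, $\Vdash^\gamma \psi$ and $\Vdash^\gamma \psi\to\phi$, so for every base $\mathscr{B}$ and every $\gamma$-modal relation $\mathfrak{R}$ we have $\Vdash^\gamma_{\mathscr{B},\mathfrak{R}} \psi$ and $\Vdash^\gamma_{\mathscr{B},\mathfrak{R}} \psi\to\phi$. The (MP) clause of Lemma \ref{Kmodalaxioms} then gives $\Vdash^\gamma_{\mathscr{B},\mathfrak{R}} \phi$, and since $\mathscr{B}$ and $\mathfrak{R}$ were arbitrary, $\Vdash^\gamma \phi$.

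There is no real obstacle here: all the semantic work has been pushed into Lemma \ref{lem:modalaxioms} (where axioms (1)--(3) are verified, with the only nontrivial case (3) relying on Lemmas \ref{lem:MaxConOr} and \ref{lem: ModalMaxCon} to reduce to behaviour at maximally-consistent bases) and into the (MP) clause of Lemma \ref{Kmodalaxioms}. The only point worth flagging is that we do \emph{not} invoke the (NEC) rule or the modal axiom (K) here, since they play no role in deriving purely classical tautologies; in particular, the present argument avoids any dependence on the modal machinery beyond what is already folded into the validity conditions.
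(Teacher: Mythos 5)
Your proof is correct and is exactly the argument the paper intends when it says the lemma ``follows immediately'' from Lemmas \ref{Kmodalaxioms} and \ref{lem:modalaxioms}: an induction on the length of a Hilbert derivation, with the axiom schemas (1)--(3) handled by Lemma \ref{lem:modalaxioms} (reading $\neg\chi$ as $\chi\to\bot$) and the (MP) step by Lemma \ref{Kmodalaxioms}. You have merely made explicit the induction that the paper leaves implicit, and your remark that (NEC) and (K) play no role here is accurate.
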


\end{document}